

\documentclass[11pt]{article} 
\usepackage{setspace}

\usepackage[utf8]{inputenc} 

\usepackage{subfigure}
\setcounter{secnumdepth}{5}




\usepackage[margin=1in]{geometry}
\geometry{letterpaper}


\usepackage{graphicx} 
\usepackage{epstopdf}


\usepackage{booktabs} 
\usepackage{array} 
\usepackage{paralist} 
\usepackage{verbatim} 
\usepackage{subfig} 

\usepackage{fancyhdr} 
\pagestyle{fancy} 
\lhead{}\chead{}\rhead{}
\lfoot{}\cfoot{\thepage}\rfoot{}


\usepackage[nottoc,notlof,notlot]{tocbibind} 
\usepackage[titles,subfigure]{tocloft} 

\usepackage{amsmath, amsthm, amssymb}

\usepackage{multirow}
\usepackage{algorithm}
\usepackage{algorithmic}

\usepackage{amsthm}
\newtheorem{theorem}{Theorem}

\newtheorem{proposition}{Proposition}

\allowdisplaybreaks

\DeclareMathOperator*{\argmax}{argmax}
\DeclareMathOperator*{\rank}{rank}
\DeclareMathOperator*{\conv}{conv}
\DeclareMathOperator*{\extr}{extr}
\DeclareMathOperator*{\proj}{proj}
\usepackage{rotating}
\usepackage{pdflscape}
\usepackage{pgfplots}
\usepgfplotslibrary{units}
\usepackage{url}
\usepackage{color}

\graphicspath{{figures/}} 

\allowdisplaybreaks

\usepackage{color}
\usepackage{bm}

\newcommand{\cI}{\mathcal{I}}

\newcommand{\cU}{\mathcal{U}}
\newcommand{\cD}{\mathcal{D}}
\newcommand{\cP}{\mathcal{P}}
\newcommand{\cG}{\mathcal{G}}
\newcommand{\cM}{\mathcal{M}}


\title{A study of rank-one sets with linear side constraints and application to the pooling problem
}
\author{Santanu S. Dey\thanks{santanu.dey@isye.gatech.edu, H. Milton  Stewart School of Industrial \& Systems Engineering, Georgia Institute of Technology, Atlanta, GA 30332.} \and 
Burak Kocuk\thanks{burakkocuk@sabanciuniv.edu, Industrial Engineering Program,  Sabanc{\i} University, Istanbul, Turkey 34956.} \and 
Asteroide Santana\thanks{asteroide.santana@gatech.edu, H. Milton  Stewart School of Industrial \& Systems Engineering, Georgia Institute of Technology, Atlanta, GA 30332.}}

\begin{document}

\maketitle

\begin{abstract}
We study sets defined as the intersection of a rank-1 constraint with different choices of linear side constraints. We identify different conditions on the linear side constraints, under which the convex hull of the rank-1 set is polyhedral or second-order cone representable. In all these cases, we also show that a linear objective can be optimized in polynomial time over these sets. Towards the application side, we show how these sets relate to commonly occurring substructures of a general quadratically constrained quadratic program. To further illustrate the benefit of studying quadratically constrained quadratic programs from a rank-1 perspective, we propose new rank-1 formulations for the generalized pooling problem and use our convexification results to obtain several new convex relaxations for the pooling problem. Finally, we run a comprehensive set of computational experiments and show that our convexification results together with discretization significantly help in improving dual bounds for the generalized pooling problem.

\end{abstract}

\section{Introduction}
\subsection{Motivation}
A general quadratically constrained quadratic program (QCQP) is an optimization problem of the following form:
\begin{eqnarray}\label{eq:qcqp}
\begin{array}{rl}
\textup{min} & x^{\top}Q^0x + (a^0)^{\top}x\\
\textup{s.t.} &  x^{\top}Q^k x + (a^k)^{\top}x \leq b_k  \qquad \forall k \in \{1, \dots, m \} \\
& x \in [0, \ 1]^n,
\end{array}
\end{eqnarray}
where the matrices $Q^i$ for $i \in \{ 0, \dots, m\}$ are not assumed to be positive semi-definite.  

Building convex relaxations of the feasible region of a QCQP is a key direction of research. Many general-purpose convexification schemes have been proposed for QCQPs. It includes, for example, the Reformulation-Linearization Technique (RLT)~\cite{sherali2013reformulation}, the Lasserre hierarchy~\cite{lasserre2001global}, and linear programming (LP) and second-order cone programming (SOCP) based alternatives to sum of squares optimization~\cite{ahmadi2014dsos}.  An important area of study regarding convexification schemes for QCQPs is to convexify commonly occurring substructures, like in the case of integer programming. However, most of the work in this direction in the global optimization area has been focused on convexification of functions (i.e., finding convex and concave envelopes), see for example~\cite{al1983jointly,ryoo2001analysis,liberti2003convex,benson2004concave,meyer2004trilinear,belotti2010valid,bao2015global,misener2015dynamically,
del2016polyhedral,sherali1997convex,Rikun1997,meyer2005convex,tawarmalani2002convexification,tuy2016convex,buchheim2017monomial,crama2017class,
Adams2018,gupte2017extended}.
There are relatively lesser number of results on convexification of sets~\cite{tawarmalani2013explicit,nguyen2013deriving,nguyen2011convexification,tawarmalani2010strong,
akshayguptethesis,kocuk2018matrix,rahman2017facets,davarnia2017simultaneous,LiVittal2018,Burer2017,modaresi2017convex,dey2018new}. It is well-known that it is possible to obtain tighter convex relaxations when convexifing a set directly rather than using convex envelopes of functions describing the set. In this paper, we pursue the convexification of sets that appear as substructures of general QCQPs.

A common approach to obtain convex relaxations of QCQPs is that of using semi-definite programming (SDP) relaxations. The first step in this approach is to write an equivalent form of the QCQP~(\ref{eq:qcqp}) as follows:
\begin{eqnarray}
&\textup{min}& \langle Q^0, X\rangle + (a^{0})^{\top}x  \label{eq:QCQP2a}\\
& \textup{s.t.}& \langle Q^k, X\rangle + (a^k)^{\top}x \leq b_k \qquad \forall k \in \{1, \dots, m\} \label{eq:QCQP2b}\\
&& \textup{rank}\left(\left[ \begin{array}{cc} 1 & x^{\top}\\ x & X \end{array}\right ]\right ) = 1 \label{eq:QCQP2c}\\
&& x \in [0, \ 1 ]^n,\label{eq:QCQP2d}
\end{eqnarray}
where $\langle U, V \rangle := \sum_{i = 1}^{n_1}\sum_{j =1}^{n_2} U_{ij} V_{ij}$. 
Observe that all the non-convexity of the problem is now captured by the rank-1 condition. This motivates our study of sets defined by a rank-1 
\footnote{With some abuse of terminology, we will be referring to matrices whose rank is at most one by simply rank-1 matrices.} constraint 
together with some linear side constraints:
\begin{eqnarray}\label{eq:cU}
\cU_{(n_1,n_2)}^m( [A^k, b_k]_{k=1}^m) :=
 \left\{ W \in \mathbb{R}^{n_1 \times n_2}_{+}\, | \, \langle A^k, W\rangle   \leq b_k, \forall k \in \{1, \dots, m\}, \ \rank(W) \leq 1 \right\}, 
\end{eqnarray} 
where we will recover  (\ref{eq:QCQP2b})--(\ref{eq:QCQP2d}) if we replace $W$ with $\left[ \begin{array}{cc} 1 & x^{\top}\\ x & X \end{array}\right]$ and with appropriate choice of $A^k$s.

The starting point of our investigations is the classical result of~\cite{Burer2009} that says: if the linear inequalities in (\ref{eq:cU}) are $W_{ij} \leq 1$ for all $i\in \{1, \dots, n_1\}$ and $j\in \{1, \dots, n_2\}$, then $\conv(\cU)$ is exactly the boolean quadric polytope~\cite{padberg1989boolean}. This is a well-studied polytope and inequalities describing this set, such as the McCormick inequalities~\cite{mccormick1976computability} and triangle inequalities~\cite{padberg1989boolean}, are already used in practice. The paper~\cite{bonami2016solving} shows the use of more complicated inequalities valid for the boolean quadric polytope for solving box-constrained quadratic programs.  

However, to the best our knowledge, no other particular choice of structured linear side constraint has ever been studied (see~\cite{bienstock2016outer} for intersection cuts for rank-1 sets with general linear constraints). 
Let us give a simple choice of linear side constraints as a motivating example (where we do not assume the matrix variable is a square matrix):  
bounds on rows or columns of the $W$ variable, i.e. the set
\begin{eqnarray}\label{eq:rowset}
{\cU}^{\textup{row}}_{(n_1,n_2)}(l,u):= \left\{ W \in \mathbb{R}^{n_1 \times n_2}_{+}\, | \, l_i \leq \sum_{j= 1}^{n_2}W_{ij} \leq u_i, \forall i \in \{1, \dots, n_1\}, \ \rank(W) \leq 1 \right\}, 
\end{eqnarray}
where we assume $0 \leq l \leq u$. This choice of side constraints is not arbitrary, but comes up  naturally for the pooling problem~\cite{haverly1978studies, Gupte2017}.  Also note that such a relaxation could always be constructed for any bounded QCQP. 

\subsection{Contributions} 
In this paper, we explore general conditions under which the convex hull of the set $\cU_{(n_1,n_2)}^m( [A^k, b_k]_{k=1}^m)$ is polyhedral or second-order cone representable (SOCr), and also show that in each of these cases a linear objective function can be optimized over these sets  in polynomial time. These results are presented in Section~\ref{sec:genResults}. It turns out that the set ${\cU}^{\textup{row}}_{(n_1,n_2)}(l,u)$ introduced in (\ref{eq:rowset}) is a special case of the sets studied in Section~\ref{sec:genResults} and  its convex hull is polyhedral.  

In Section~\ref{sec:poolResults},  we specialize the general results of Section~\ref{sec:genResults} for sets like ${\cU}^{\textup{row}}_{(n_1,n_2)}(l,u)$ that are applicable for the pooling problem. We present results on the polyhedrality of convex hull, valid inequalities and extended formulations, and complexity of separating inequalities in the original space. Specifically in Section~\ref{sec:mainpool}, we present several formulations (and related discretizations) of the generalized pooling problem (i.e. pooling problems that have pool-to-pool arcs). Then, we illustrate how the sets studied here appear as substructures in different ways in the different pooling formulations. 

Finally, in Section~\ref{sec:maincomp}, we present results from computational experiments, which show that the new inequalities generated from substructures similar to ${\cU}^{\textup{row}}_{(n_1,n_2)}(l,u)$,  help in  improving dual bounds significantly. 

\section{General results}
\label{sec:genResults}
\textbf{Notation:} We use $\conv(S)$ to denote convex hull of a set $S$, $\extr(S)$ to denote the set of extreme points of a set $S$, $\proj_x (S)$ to denote the projection of a set $S$ onto the $x$ variables, $[n]$ to denote the set $\{1, \dots, n\}$, and $\mathbb{R}^n_{++}$ to be the set of $n$ dimensional positive vectors. Let us also define the set of $m-$partitions of a set $S$ as 
\[
\cP_m(S) := \left \{(S_1, \dots, S_m)  \,|\,  \cup_{k=1}^m S_k = S, \ S_j \cap S_k = \emptyset \text{ for } j\neq k \right\}.
\]

In this section, we present cases in which $\conv \big ( \cU_{(n_1,n_2)}^m( [A^k, b_k]_{k=1}^m)  \big )$ is either polyhedral or second-order cone representable.

\subsection{Some cases with polyhedral representable convex hulls}
We start with the trivial case of a single linear side constraint.
\begin{proposition}\label{prop:singleSide}
Suppose the set \ $\cU_{(n_1,n_2)}^1 ( [A^1, b_1])$ defined as in \eqref{eq:cU} is bounded\footnote{Boundedness is equivalent to $A^1_{ij} > 0$ for all $i\in [n_1],\ j \in [n_2]$.}. Then, we have
\[
\conv\big (\cU_{(n_1,n_2)}^1( [A^1, b_1]) \big ):=
 \left\{ W \in \mathbb{R}^{n_1 \times n_2}_{+}\, | \, \langle A^1, W\rangle   \leq b_1 \right\}.
\]
\end{proposition}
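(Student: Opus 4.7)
The plan is to prove the claim by showing both inclusions between $\conv\big(\cU_{(n_1,n_2)}^1([A^1,b_1])\big)$ and the polyhedron $P := \{W \in \mathbb{R}_+^{n_1 \times n_2} : \langle A^1, W\rangle \le b_1\}$. One inclusion is immediate: $\cU_{(n_1,n_2)}^1([A^1,b_1]) \subseteq P$ by definition, and $P$ is convex, so $\conv\big(\cU_{(n_1,n_2)}^1([A^1,b_1])\big) \subseteq P$.

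For the reverse inclusion, I will pass to extreme points. The boundedness hypothesis (as the footnote indicates) forces $A^1_{ij} > 0$ for every $(i,j) \in [n_1] \times [n_2]$. Hence $P$ is a bounded polyhedron and, by Minkowski--Weyl, equals $\conv(\extr(P))$. (The degenerate cases $b_1 < 0$ and $b_1 = 0$ give $P = \emptyset$ and $P = \{0\}$ respectively, both of which make the claim trivial.) It therefore suffices to show $\extr(P) \subseteq \cU_{(n_1,n_2)}^1([A^1,b_1])$.

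The key step is to characterize $\extr(P)$. Since $P$ sits inside $\mathbb{R}_+^{n_1 \times n_2}$ and is cut out by a single additional linear inequality with strictly positive coefficients, it is a simplex-like set. Using the standard criterion that a point is extreme iff the active constraints determine it uniquely (equivalently, that a feasible $W$ is extreme iff the set of nonzero coordinates is affinely independent in the single budget row), I expect to obtain
\[
\extr(P) \;=\; \{0\} \;\cup\; \left\{\frac{b_1}{A^1_{ij}}\, e_i e_j^{\top} : (i,j) \in [n_1] \times [n_2]\right\},
\]
where $e_i e_j^{\top}$ denotes the matrix whose only nonzero entry is a $1$ in position $(i,j)$. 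Each such extreme point has rank at most one and satisfies $\langle A^1, W\rangle \le b_1$, hence belongs to $\cU_{(n_1,n_2)}^1([A^1,b_1])$. This gives $P = \conv(\extr(P)) \subseteq \conv\big(\cU_{(n_1,n_2)}^1([A^1,b_1])\big)$ and closes the argument.

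The whole proof is really an exercise in extreme-point enumeration of a simplex-like polytope, so I do not anticipate a substantive obstacle; the only care needed is to make the extreme-point characterization rigorous and to handle the trivial corner cases of $b_1$.
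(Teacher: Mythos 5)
Your proposal is correct and follows essentially the same route as the paper, which simply observes that the extreme points of $\left\{ W \in \mathbb{R}^{n_1 \times n_2}_{+} \, | \, \langle A^1, W\rangle \leq b_1 \right\}$ are rank-1 matrices; your explicit enumeration of those extreme points (the origin and the scaled unit matrices $\frac{b_1}{A^1_{ij}} e_i e_j^{\top}$) just fills in the detail the paper leaves implicit. No gap.
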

\noindent The proof of Proposition \ref{prop:singleSide} follows from the fact that the extreme points of the set $$\left\{ W \in \mathbb{R}^{n_1 \times n_2}_{+}\, | \, \langle A^1, W\rangle   \leq b_1 \right\},$$ are rank-1 matrices. Therefore, convexifying with just one constraint is not very interesting. 

Next, we allow multiple linear side constraints with certain rank-1 constraint matrices.
\begin{theorem}\label{thm:multrank1}
Consider the set  $\cU_{(n_1,n_2)}^m ( [A^k, b_k]_{k=1}^m)$ defined in \eqref{eq:cU}  where the constraint matrices  are of the form
\[
A^k := \alpha^k \beta^{\top} \qquad k\in[m],
\]
where $\alpha^k \in \mathbb{R}^{n_1}$ for $k\in[m]$ and $\beta \in \mathbb{R}_{++}^{n_2}$. Moreover, let the $\alpha^k$'s be such that 
\begin{eqnarray}\label{eq:recbnd}
\{u \in \mathbb{R}^{n_1}_{+}\,|\, (\alpha^k)^{\top}u \leq 0, \forall \ k \in [m] \} = \{0\}.  
\end{eqnarray}
Then, the following hold:
\begin{enumerate}
\item
$
\conv \big (\cU_{(n_1,n_2)}^m(  [A^k, b_k]_{k=1}^m ) \big )
$
is a polyhedral set.
\item The set of extreme points of ${\cU}^{m}_{(n_1, n_2)} ( [A^k, b_k]_{k=1}^m)$ are of the form:
\begin{equation*}
\begin{split}
\extr \left({\cU}^{m}_{(n_1, n_2)} ( [A^k, b_k]_{k=1}^m)\right) &= \left\{  \gamma  e_h^\top  \,|\, h\in[n_2] \right\},
\end{split}
\end{equation*}
where $e_h$ is the $n_2$-dimensional vector with all components zero except the $h^{th}$  component which is $1$ and the $\gamma$'s are extreme points of the set:
$$\left\{ \gamma \in \mathbb{R}^{n_1}_{+}\,|\, \sum_{i = 1}^{n_1}\alpha^k_i \beta_h\gamma_i \leq b_k, \ k\in[m] \right\}.$$ 
\item A compact extended formulation of $\textup{conv}\left(\cU_{(n_1,n_2)}^m ( [A^k, b_k]_{k=1}^m)\right)$ is given by:
\begin{align}
\sum_{j = 1}^{n_2} t_j &= 1  \label{eq:extended1}\\
\sum_{i=1}^{n_1}\alpha_i^k\beta_jW_{ij} &\leq b_kt_j \ &\forall& k \in [m], j \in [n_2]\\
t_j &\geq 0 \ &\forall& j \in [n_2]. \label{eq:extended3}
\end{align}
Therefore, a linear function can be optimized in polynomial time on $\cU_{(n_1,n_2)}^m ( [A^k, b_k]_{k=1}^m)$.
\end{enumerate}
\end{theorem}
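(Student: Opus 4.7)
The plan is to reduce $\conv(\cU_{(n_1,n_2)}^m([A^k,b_k]_{k=1}^m))$ to the convex hull of a finite union of bounded polytopes and then apply standard disjunctive programming. First, I will write any $W \in \cU_{(n_1,n_2)}^m([A^k,b_k]_{k=1}^m)$ as $W = u v^\top$ with $u \in \mathbb{R}_+^{n_1}$ and $v \in \mathbb{R}_+^{n_2}$, which is legal because $W$ is nonnegative and of rank at most one. The rank-$1$ form $A^k = \alpha^k \beta^\top$ then makes each side constraint factor cleanly as $\langle A^k, W\rangle = ((\alpha^k)^\top u)(\beta^\top v) \leq b_k$. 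For each $h \in [n_2]$ I will introduce the auxiliary polyhedron $P_h := \{\gamma \in \mathbb{R}^{n_1}_+ : \beta_h (\alpha^k)^\top \gamma \leq b_k, \, k \in [m]\}$. Since $\beta_h > 0$, the recession cone of $P_h$ coincides with $\{u \in \mathbb{R}^{n_1}_+ : (\alpha^k)^\top u \leq 0, \forall k\}$, which is $\{0\}$ by the hypothesis \eqref{eq:recbnd}, so every $P_h$ is a bounded polytope.

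The crux will be a column-wise decomposition. Assuming $W \neq 0$, let $S := \beta^\top v > 0$. For indices $j$ with $v_j > 0$, define $t_j := \beta_j v_j / S$ and $\gamma^j := (S/\beta_j)\, u$; for the remaining $j$, set $t_j := 0$ and let $\gamma^j$ be any point of $P_j$ (immaterial since $t_j\gamma^j e_j^\top = 0$). Three short checks then yield the desired representation: (a) $\sum_j t_j = 1$ with $t_j \geq 0$; (b) $W = \sum_j t_j \gamma^j e_j^\top$ column-by-column; and (c) $\beta_j (\alpha^k)^\top \gamma^j = S\,(\alpha^k)^\top u = \langle A^k, W\rangle \leq b_k$, placing each $\gamma^j$ in $P_j$. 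This will show $\cU_{(n_1,n_2)}^m([A^k,b_k]_{k=1}^m) \subseteq \conv\bigl(\bigcup_{h \in [n_2]} \{\gamma e_h^\top : \gamma \in P_h\}\bigr)$, and the reverse inclusion is immediate because each $\gamma e_h^\top$ is rank-at-most-one, nonnegative, and feasible by construction of $P_h$.

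From this decomposition the three claims follow in turn. Item (1) is now clear because $\conv(\cU_{(n_1,n_2)}^m)$ is the convex hull of finitely many points — the finitely many vertices of each $P_h$ placed in the $h$-th column. For item (2), any $\gamma e_h^\top$ with $\gamma \in \extr(P_h)$ is extreme in $\cU_{(n_1,n_2)}^m$: a proper convex combination $\gamma e_h^\top = \lambda W_1 + (1-\lambda) W_2$ with $W_1,W_2 \in \cU_{(n_1,n_2)}^m$ forces both $W_i$ to be supported on column $h$, hence $W_i = \mu^i e_h^\top$ with $\mu^i \in P_h$, and extremality of $\gamma$ inside $P_h$ collapses to $\mu^1 = \mu^2 = \gamma$; conversely the decomposition above rules out any other extreme matrix. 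For item (3), the system \eqref{eq:extended1}--\eqref{eq:extended3} is exactly the disjunctive-programming description of the convex hull of the union of the bounded polytopes $\{\gamma e_h^\top : \gamma \in P_h\}$, $h \in [n_2]$: substituting $\gamma^j = W_{\cdot,j}/t_j$ into the defining inequalities of $P_j$ and multiplying by $t_j$ recovers the middle family of \eqref{eq:extended1}--\eqref{eq:extended3}, while \eqref{eq:extended1} and \eqref{eq:extended3} enforce the convex-combination structure. Boundedness of every $P_h$ guarantees that the projection onto $W$ is exact and closed, and polynomial-time solvability of a linear objective follows because the extended formulation has size polynomial in $n_1, n_2, m$.

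The main obstacle I anticipate is the decomposition step itself: the scaling must simultaneously make each $\gamma^j$ nonnegative, sum $\sum_j t_j \gamma^j e_j^\top$ to $W$, and keep each $\gamma^j$ inside $P_j$. What makes it go through is that the common right factor $\beta$ is strictly positive, which lets me drive the weights $t_j$ off of $\beta_j v_j$ rather than $v_j$ alone and thereby ``cancel'' $\beta_j$ in the rescaled constraint $\beta_j (\alpha^k)^\top \gamma^j \leq b_k$. Degenerate situations ($W = 0$, some $v_j = 0$, or some $P_h$ empty) are disposed of by inspection once the generic decomposition is in place, and the recession hypothesis \eqref{eq:recbnd} is used exactly once — to certify that every $P_h$ has only finitely many extreme points.
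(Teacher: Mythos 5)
Your proof is correct, but it follows a genuinely different route from the paper's. The paper argues by perturbation: it takes a putative extreme point $\hat W=\hat x\hat y^{\top}$ with two nonzero columns, builds $W^{\pm}=\hat x(\hat y\pm\epsilon_1 e_1\mp\epsilon_2 e_2)^{\top}$ with $\beta_1\epsilon_1=\beta_2\epsilon_2$ to contradict extremality, concludes that extreme points are single-column (hence finitely many, giving (i) and (ii)), and then proves (iii) by a second, separate perturbation argument on the extreme points of the extended formulation to establish the two inclusions. You instead give an explicit algebraic decomposition of \emph{every} feasible $W=uv^{\top}$: with $S=\beta^{\top}v$, the weights $t_j=\beta_j v_j/S$ and points $\gamma^j=(S/\beta_j)u\in P_j$ exhibit $W=\sum_j t_j\,\gamma^j e_j^{\top}$ directly, so $\conv(\cU)$ equals the convex hull of the finite union of bounded polytopes $\{\gamma e_h^{\top}:\gamma\in P_h\}$, and all three claims drop out at once --- (iii) in particular becomes an instance of the standard Balas disjunctive-programming formulation rather than something to be re-verified by hand. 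Your approach buys a single unified argument, an exact identification of where hypothesis \eqref{eq:recbnd} and the positivity of $\beta$ are used (boundedness of each $P_h$, and the cancellation $\beta_j(\alpha^k)^{\top}\gamma^j=S(\alpha^k)^{\top}u$, respectively), and it avoids the paper's slightly delicate handling of which constraints stay satisfied under perturbation; the paper's approach buys self-containedness, since it never needs to invoke the disjunctive-programming convex-hull theorem. The only loose ends in your write-up are cosmetic: you should note that $P_j=(1/\beta_j)P$ for a common polyhedron $P$, so either all $P_j$ are nonempty or $\cU=\emptyset$ (disposing of the ``pick any point of an empty $P_j$'' phrasing), and the extended formulation implicitly carries $W\ge 0$ from the definition of $\cU$, which Balas's description requires you to keep.
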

\begin{proof}
To simplify notation, we will just write $\cU_{(n_1,n_2)}^m$ instead of $\cU_{(n_1,n_2)}^m ( [A^k, b_k]_{k=1}^m)$ in this proof.

Observe first that condition (\ref{eq:recbnd}) together with the fact that $\beta_j >0$ for $j \in [n_2]$ imply that $\{W \in \mathbb{R}^{n_1 \times n_2}_{+} \,|\, \langle \alpha^k\beta^{\top}, W\rangle \leq 0 , \forall k \in [m]\}$ is bounded, and therefore $\cU_{(n_1,n_2)}^m$ is bounded. 
Therefore, to prove (i), it is sufficient to show that the set of extreme points is finite. We will begin by showing that in any extreme point of the set $\cU_{(n_1,n_2)}^m$, there is at most one non-zero column. By contradiction, let us assume that $\hat W$ is an extreme point with two non-zero columns. Since $\rank(\hat W) \leq 1$ and $\hat W \ge 0$, there exists two vectors $\hat x \in \mathbb{R}_+^{n_1}$ and $\hat y \in \mathbb{R}_{+}^{n_2}$ such that $\hat W = \hat x \hat y^{\top}$ (note we  may also assume $\hat x \in \mathbb{R}_{-}^{n_1}$ and $\hat y \in \mathbb{R}_{-}^{n_2}$; however we cannot have $\hat{x}$ and $\hat{y}$ with different component with different signs, since then $\hat W$ is not non-negative). Without loss of generality, let us assume that the first two components of $y$ are positive. Let us define the set $\mathcal{K}(W) := \{k: \ \langle A^k, W\rangle   = b_k \}$. Consider two new points $W^{\pm} = \hat x (\hat y \pm \epsilon_1 e_1 \mp \epsilon_2 e_2)^{\top}$. Observe that we can select small but positive $\epsilon_1$ and $\epsilon_2$ such that both $W^+$ and $W^-$ belong to  $\cU_{(n_1,n_2)}^m$ since  for each $k \in \mathcal{K}(\hat W)$ we have
\[
b_k =  \langle  \alpha^k \beta^{\top} ,   \hat x (\hat y \pm \epsilon_1 e_1 \mp \epsilon_2 e_2)^{\top} \rangle  \Leftrightarrow
(\hat x^{\top}\alpha^k) (\beta_1 \epsilon_1 - \beta_2 \epsilon_2) = 0 \Leftrightarrow \beta_1 \epsilon_1 = \beta_2 \epsilon_2,
\]
where note that $\beta_1 >0$ and $\beta_2 > 0$. Hence, we reach a contradiction to $\hat W$ being an extreme point. 

Finally, the fact that there is at most one non-zero column in an extreme point of  $\cU_{(n_1,n_2)}^m$, the extreme points with $j^{th}$ column being non-zero is of the form:
\begin{eqnarray}
W^u = 0, u \in [n_2]\setminus \{j\}, \ W \geq 0, \  \beta_j(\alpha^k)^{\top}W^j \leq b_k \  \forall k \in [m],
\end{eqnarray}
where $W^u$ is the $u^{th}$ column of $W$. Thus, there are finitely many extreme points. Hence, the result follows. This also proves (ii).


To prove (iii), 
let $V^{{m}}_{(n_1,n_2)}$ be the compact extended formulation defined in (\ref{eq:extended1})-(\ref{eq:extended3}).

$\textup{conv} \left({\cU}^{{m}}_{(n_1,n_2)}\right) \subseteq \textup{proj}_W\left(V^{{m}}_{(n_1,n_2)}\right)$: Part~(ii) lists all the extreme points of ${\cU}^{{m}}_{(n_1,n_2)}$. It is straightforward to check that these extreme points belong to $\textup{proj}_W\left(V^{{m}}_{(n_1,n_2)}\right)$.

$\textup{proj}_W\left(V^{{m}}_{(n_1,n_2)}\right) \subseteq \textup{conv} \left({\cU}^{m}_{(n_1,n_2)}\right)  $: We first claim that in any extreme point of $V^{{m}}_{(n_1,n_2)}$ exactly one $t$ variable is positive. In particular, let $(\hat{t}, \hat{W}) \in V^{m}_{(n_1,n_2)}$ and without loss of generality let $\hat{t}_1 \neq 0, \hat{t}_2 \neq 0$. Let $\epsilon$ such that $0 < \epsilon \leq \frac{1}{2}\cdot \textup{min}\{\hat{t}_1, \hat{t}_2\}$. Then we construct two solutions $(\tilde{t}, \tilde{W})$ and $(\bar{t}, \bar{W})$ as follows:
\begin{itemize}
\item[] $\tilde{t}_1 = \hat{t}_1 - \epsilon, \bar{t}_1 = \hat{t}_1 + \epsilon; \tilde{t}_2 = \hat{t}_1 + \epsilon, \bar{t}_1 = \hat{t}_1 - \epsilon; \tilde{t}_j = \bar{t}_j = \hat{t}_j \forall j \in [n_2]\setminus \{1,2\};$
\item[] For $j \in \{1,2\}$: $\tilde{W}^{ij} = \frac{\tilde{t}_j}{\hat{t}_j}\hat{W}_{ij}$ and $\bar{W}^{ij} = \frac{\bar{t}_j}{\hat{t}_j}\hat{W}_{ij}$; $\tilde{W}_{ij} = \bar{W}_{ij} = \hat{W}_{ij} \forall i \in [n_1], j \in [n_2]\setminus \{1,2\}$.
\end{itemize}
It is straighforward to verify that $(\tilde{t}, \tilde{W})$ and $(\bar{t}, \bar{W})$ belong to $V^{{m}}_{(n_1,n_2)}$. Thus, $(\hat{t}, \hat{W})$ is not an extreme point of $V^{{m}}_{(n_1,n_2)}$. 

Since an extreme point of $V^{{m}}_{(n_1,n_2)}$ has exactly one positive component in $t$, each extreme point of $V^{m}_{(n_1,n_2)}$ are of the form: $t_{j'} = 1$, and $W_{j'}$ is an extreme point of $$\left\{x \in \mathbb{R}^{n_1}_{+} \,|\, \sum_{i}\alpha^k_i \beta_{j'} x_i \leq b^k \right\},$$
for some $j'$ and $t_j = 0$, $W_{ij} = 0$ for all $j \in [n_2]\setminus j'$ ($t_j = 0$ implies $W_{ij} = 0$, due to condition (\ref{eq:recbnd})). Thus $\textup{proj}_{W}\left(\extr \left( V^{m}_{(n_1,n_2)}\right) \right) = \extr\left( {\cU}^{m}_{(n_1,n_2)}\right)$. Therefore, we obtain that $\textup{proj}_W\left(V^{m}_{(n_1,n_2)}\right) \subseteq \textup{conv} \left({\cU}^{m}_{(n_1,n_2)}\right)  $. 
\end{proof}

\subsection{Some cases with second-order cone representable convex hulls}
In this section, we study two cases in which the convex hull is SOCr but not necessary polyhedral.
We first consider sets with two arbitrary linear side constraints. 

\begin{theorem}\label{prop:twoSide}
Suppose that  the set \ $\cU_{(n_1,n_2)}^2 ( [A^k, b_k]_{k=1}^{2})$ as defined in \eqref{eq:cU} is bounded. Then, the convex hull of $ \cU_{(n_1,n_2)}^2 ( [A^k, b_k]_{k=1}^{2}) $ is SOCr. Moreover, a linear function can be optimized in polynomial time over $\cU_{(n_1,n_2)}^2 ( [A^k, b_k]_{k=1}^2)$. 
\end{theorem}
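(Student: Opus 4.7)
The plan is to adapt the extreme-point perturbation argument from the proof of Theorem~\ref{thm:multrank1} to the setting of two arbitrary linear side constraints. Factor any $W\in\cU_{(n_1,n_2)}^2([A^k,b_k]_{k=1}^2)$ as $W=\hat x\hat y^{\top}$ with $\hat x\in\mathbb{R}^{n_1}_+$, $\hat y\in\mathbb{R}^{n_2}_+$. If $\hat W=\hat x\hat y^{\top}$ is an extreme point, consider rank-1--preserving perturbations of the form $\hat W\pm\epsilon\,\hat x u^{\top}=\hat x(\hat y\pm\epsilon u)^{\top}$: for $\mathrm{supp}(u)\subseteq\mathrm{supp}(\hat y)$ and small $\epsilon$ they remain nonnegative, and they preserve each tight side constraint iff $(\hat x^{\top}A^k)u=0$. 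Since at most two side constraints can be tight at $\hat W$, the admissible space of $u$ is the kernel of at most two linear forms in $|\mathrm{supp}(\hat y)|$ variables, hence has dimension at least $|\mathrm{supp}(\hat y)|-2$. Extremality therefore forces $|\mathrm{supp}(\hat y)|\le 2$, and by the symmetric perturbations $\hat W\pm\epsilon\,v\hat y^{\top}$, also $|\mathrm{supp}(\hat x)|\le 2$. Hence every extreme point of $\cU_{(n_1,n_2)}^2$ is supported inside some $2\times 2$ submatrix of $W$.

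With this support characterization in hand, I would enumerate the $O(n_1^2 n_2^2)$ possible $2\times 2$ index patterns $(i,i';j,j')$ and, for each pattern $\pi$, describe the rank-1 matrices supported on $\pi$ as the zero set of the single bilinear equation $W_{ij}W_{i'j'}=W_{ij'}W_{i'j}$ in the nonnegative orthant of the four pattern-variables. Intersecting with the two side constraints yields the extreme points of $\cU_{(n_1,n_2)}^2$ lying in $\pi$: when both supports are of size $2$, both side constraints must be tight, cutting out a one-dimensional algebraic curve, while smaller supports correspond to the edges/vertices of the pattern's nonnegative cone. To build a global SOCr extended formulation, introduce nonnegative pattern weights $t_\pi$ with $\sum_\pi t_\pi=1$, pattern-level matrices $W^\pi$ supported on $\pi$, the homogenized side constraints $\langle A^k,W^\pi\rangle\le b_k t_\pi$, and a rotated second-order cone constraint (homogenized by $t_\pi$) that convexly encodes the rank-1 minor condition on each $2\times 2$ block. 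Setting $W=\sum_\pi W^\pi$ and projecting out $(t,\{W^\pi\})$ then yields the claimed SOCr description of $\conv(\cU_{(n_1,n_2)}^2)$, from which polynomial-time optimization of a linear objective follows by solving the resulting SOCP.

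The main obstacle I anticipate is identifying the correct SOC-representable convex surrogate for the nonconvex rank-1 equation $W_{ij}W_{i'j'}=W_{ij'}W_{i'j}$ on each $2\times 2$ pattern: the naive inequality $W_{ij}W_{i'j'}\ge W_{ij'}W_{i'j}$ is not jointly convex in all four nonnegative entries, so one must exploit the two tight side constraints to reduce the rank-1 condition to a hyperbolic constraint encodable by rotated second-order cones in the lifted $(t_\pi,W^\pi)$-space. A secondary subtlety, paralleling the $t_j$-analysis in the proof of Theorem~\ref{thm:multrank1}, is verifying the reverse inclusion of the extended formulation: every feasible $(t,\{W^\pi\})$ must decompose into a convex combination of rank-1 matrices in $\cU_{(n_1,n_2)}^2$, which here requires showing that within each pattern slice the rotated-cone surrogate is tight precisely along the one-dimensional conic curve of extreme points identified above.
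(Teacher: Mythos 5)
Your first half --- the perturbation argument showing that every extreme point of $\cU_{(n_1,n_2)}^2([A^k,b_k]_{k=1}^2)$ has at most two non-zero columns and, symmetrically, at most two non-zero rows --- is essentially identical to the paper's proof (the paper phrases it as a $2\times 3$ homogeneous system having a non-trivial solution; you phrase it as a kernel-dimension count, but it is the same argument). The gap is in the second half. The crux of the theorem is showing that, for a fixed $2\times 2$ support pattern, the convex hull of the set cut out by the two linear constraints, nonnegativity, and the single quadratic equation $W_{11}W_{22}=W_{21}W_{12}$ (the set \eqref{eq:12rowcol}) is SOCr. You correctly identify this as ``the main obstacle I anticipate,'' but you do not resolve it: your proposed fix --- ``exploit the two tight side constraints to reduce the rank-1 condition to a hyperbolic constraint encodable by rotated second-order cones'' --- is a plan, not an argument, and it is not clear it can be carried out directly. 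As you note yourself, the naive relaxation $W_{11}W_{22}\ge W_{12}W_{21}$ is not convex in all four nonnegative variables, and an extreme point of the piece need not have both side constraints tight (a point on the quadric can be extreme purely because of the curvature of the surface), so the reduction to a one-dimensional curve with two active constraints does not hold in general.

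The paper closes this gap by invoking an external result~\cite{santana2018convex}: the convex hull of a set described by one quadratic equation intersected with a bounded polyhedron is SOCr (with a representation whose size is linear in the number of faces of the polyhedron). Applied to \eqref{eq:12rowcol} --- a bounded polytope in four variables with six linear inequalities, intersected with one quadratic equation --- this yields an explicit SOCr description of each piece; the convex hull of the union of finitely many compact SOCr sets is then SOCr, and a linear function is optimized by enumerating the $\binom{n_1}{2}\binom{n_2}{2}$ supports and solving a constant-size SOCP for each. Without this (or an equivalent convexification of a quadratic equation over a polytope), your construction of the disjunctive extended formulation with weights $t_\pi$ cannot be completed, because you have no certified SOCr description of the individual pieces to homogenize. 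So the proposal is incomplete at precisely the step that carries the mathematical content of the theorem.
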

\begin{proof}
%
Note that the since $\cU_{(n_1,n_2)}^2 ( [A^k, b_k]_{k=1}^{2})$ is assumed to be bounded, we have that the convex hull of $\cU_{(n_1,n_2)}^2 ( [A^k, b_k]_{k=1}^{2})$ is the convex hull of its extreme points. 

We first claim that  in any extreme point, there are at most two non-zero columns.  By contradiction, let us assume that $\hat W$ is an extreme point with three non-zero columns. 
Since $\rank(\hat W) = 1$ and $\hat W \ge 0$, there exists two vectors $\hat x \in \mathbb{R}_+^{n_1}$ and $\hat y \in \mathbb{R}_{+}^{n_2}$ such that $\hat W = \hat x \hat y^{\top}$. Without loss of generality, let us assume that the first three components of $y$ are positive. 
Consider two new points $W^{\pm} = \hat x (\hat y \pm \epsilon_1 e_1 \pm \epsilon_2 e_2 \pm  \epsilon_3 e_3)^{\top}$. Note that  we can select a non-zero vector $(\epsilon_1, \epsilon_2, \epsilon_3)$ with small enough magnitude such that both $W^+$ and $W^-$ belong to  $\cU_{(n_1,n_2)}^2 ( [A^k, b_k]_{k=1}^{2})$ since we have
\[
\langle A^k, \hat x \hat y^{\top} \rangle =  \langle  A^k  ,   \hat x (\hat y \pm \epsilon_1 e_1 \pm \epsilon_2 e_2  \pm \epsilon_3 e_3)^{\top} \rangle   \Leftrightarrow
0 =   (\hat x^{\top} A^k e_1) \epsilon_1 +   (\hat x^{\top} A^k e_2) \epsilon_2  +  (\hat x^{\top} A^k e_3) \epsilon_3 \ \ k=1,2.
\]
 Note that this homogeneous system is guaranteed to have a non-trivial solution in $(\epsilon_1, \epsilon_2, \epsilon_3)$. Hence, we reach to a contradiction.
 
Following a similar procedure, one can also show that there are at most two non-zero rows in any extreme point. Therefore, we deduce that  the largest non-zero submatrix of the extreme point of the set $\cU_{(n_1,n_2)}^2 ( [A^k, b_k]_{k=1}^{2})$ is $2\times2$.

Now, let us fix a particular two-by-two submatrix of $W$. We will show that the convex hull of all extreme points whose support is on this particular choice of two-by-two submatrix is SOCr. Since the convex hull of the union of  SOCr compact sets is SOCr, we have that convex hull of the extreme points of $\cU_{(n_1,n_2)}^2 ( [A^k, b_k]_{k=1}^{2})$ is SOCr, which would complete the proof.  Without loss of generality, consider  the extreme points with support on the first two rows and first two columns. Then we are looking for the convex hull of the following set (which is satisfied by all the extreme points):
\begin{eqnarray}\label{eq:12rowcol}
\begin{array}{rcl}
\langle A^k, W \rangle  &\leq& b_k \ \forall k \in \{1, 2\} \\
W_{ij} &\geq& 0 \ \forall i \in [n_1], j \in [n_1]\\
W_{ij} &=& 0 \ \textup{ if } i \geq 3 \textup{ or } j \geq 3 \\
W_{11} W_{22} &=& W_{21} W_{12}. 
\end{array}
\end{eqnarray}

It was recently shown in~\cite{santana2018convex} that the convex hull of a set described by a quadratic constraint and  bounded polyhedral constraints is SOCr (note that since (\ref{eq:12rowcol}) represents a subset of $\cU_{(n_1,n_2)}^2 ( [A^k, b_k]_{k=1}^{2})$, it is bounded). This completes the proof. 

Finally, note that optimizing a linear function on $\cU_{(n_1,n_2)}^2 ( [A^k, b_k]_{k=1}^{2})$ is equivalent to optimizing a linear function on the set of its extreme points. Since there are ${{n_1} \choose {2}}\cdot {{n_2} \choose {2}}$ possible choices of supports and for each choice, we can optimize a linear function over (\ref{eq:12rowcol}) in polynomial time (the size of the SOC-representation is linear in the total number of faces of the polytope, which is fixed in (\ref{eq:12rowcol}) since there are only six linear inequalities in four variables).
\end{proof}

Note that the papers~\cite{Burer2017,modaresi2017convex} show that the convex hull of two quadratic constraints is SOCr.  Although Theorem~\ref{prop:twoSide} is of similar flavor,  it allows for the additional side constraints, namely, the non-negativities on the bilinear terms (i.e.,  the constraint $W \geq 0$). Moreover, the proof technique here is completely different from those used in~\cite{Burer2017,modaresi2017convex}. 

Next, we allow multiple linear side constraints with certain rank-2 constraint matrices and show that a result similar to Theorem~\ref{prop:twoSide} is possible. 
\begin{theorem}\label{prop:twoSideMultiple}
Let $n_1=n_2\ge3$ and suppose that the set \ $\cU_{(n_1,n_1)}^m ( [A^k, b_k]_{k=1}^m)$ defined in \eqref{eq:cU} is bounded. Assume that the constraint matrices  are of the form
\[
A^k := \alpha^k \beta\beta^{\top} + \gamma^k \delta \delta^{\top} \qquad k\in[m],
\]
where $\alpha^k,\gamma^k \in \mathbb{R}_{+}$ for $k\in[m]$ and $\beta,\delta \in \mathbb{R}_{++}^{n_1}$. 
Then, $\conv \big( \cU_{(n_1,n_1)}^m ( [A^k, b_k]_{k=1}^{m}) \big )$ is SOCr.  Moreover, for a fixed $m$, a linear function can be optimized in polynomial time over $\cU_{(n_1,n_2)}^m ( [A^k, b_k]_{k=1}^m)$. 
\end{theorem}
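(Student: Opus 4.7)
My plan is to follow the same strategy used in the proof of Theorem~\ref{prop:twoSide}: first I would show that every extreme point of $\cU_{(n_1,n_1)}^m([A^k,b_k]_{k=1}^m)$ has support contained in some $2\times 2$ submatrix, and then express $\conv(\cU_{(n_1,n_1)}^m([A^k,b_k]_{k=1}^m))$ as the convex hull of a finite union of small SOCr pieces via the result of~\cite{santana2018convex}. The key observation that makes the extension from two side constraints to $m$ side constraints go through is that every $A^k$ lies in the two-dimensional subspace $\textup{span}\{\beta\beta^\top, \delta\delta^\top\}$, so no matter how many constraints are active, they jointly impose only a rank-two restriction on any feasible perturbation direction.

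\textbf{Bounding the support of extreme points.} Let $\hat W$ be an extreme point, written as $\hat W = \hat x \hat y^\top$ with $\hat x,\hat y \in \mathbb{R}_+^{n_1}$, and suppose for contradiction that $\hat y$ has at least three positive components, say at indices $1,2,3$. I would consider the rank-$1$ perturbations $W^\pm = \hat x (\hat y \pm \epsilon)^\top$ with $\epsilon$ supported on $\{1,2,3\}$. Preserving each active constraint amounts to $\langle A^k, \hat x \epsilon^\top\rangle = 0$, which by the structure of $A^k$ reduces to $\alpha^k (\beta^\top \hat x)(\beta^\top \epsilon) + \gamma^k(\delta^\top \hat x)(\delta^\top \epsilon) = 0$. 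Setting $p := \beta^\top \epsilon$ and $q := \delta^\top\epsilon$, it is enough to force $p = q = 0$, a homogeneous system of two linear equations in three unknowns $\epsilon_1, \epsilon_2, \epsilon_3$, which therefore admits a nonzero solution. Strict positivity of $\hat y_1, \hat y_2, \hat y_3$ keeps $W^\pm \geq 0$ for small $|\epsilon|$ and the non-active constraints remain satisfied, contradicting extremality. An entirely analogous perturbation of $\hat x$ (exploiting $\beta,\delta > 0$) shows that $\hat x$ has at most two positive entries as well, so $\hat W$ has support inside some $2\times 2$ submatrix.

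\textbf{SOCr convex hull and complexity.} Once the support of any extreme point is known to fit in a $2\times 2$ submatrix, I would fix rows $\{i_1,i_2\}$ and columns $\{j_1,j_2\}$ and look at the restricted set, described by the $m$ linear side constraints, non-negativity, and the single bilinear equation $W_{i_1 j_1} W_{i_2 j_2} = W_{i_1 j_2} W_{i_2 j_1}$ on a bounded polyhedron; by~\cite{santana2018convex} its convex hull is SOCr. Taking the union over the $\binom{n_1}{2}^2$ choices of $2\times 2$ supports and using the standard fact that the convex hull of a finite union of compact SOCr sets is SOCr yields an SOCr representation of $\conv(\cU_{(n_1,n_1)}^m([A^k,b_k]_{k=1}^m))$. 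For the algorithmic claim, once $m$ is fixed each restricted SOCr representation has size polynomial in the input (the submatrix problem has four variables and $m+4$ linear inequalities), and there are only polynomially many support choices to enumerate. The step I expect to be trickiest is the perturbation argument itself: I would need to verify carefully that forcing $p = q = 0$ captures \emph{all} active constraints simultaneously (which is exactly where the two-dimensional structure of $\textup{span}\{\beta\beta^\top,\delta\delta^\top\}$ is crucial) and that the non-active inequalities together with non-negativity are preserved for sufficiently small $|\epsilon|$, which is why the strict positivity of $\beta,\delta$ is assumed.
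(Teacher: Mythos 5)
Your proposal is correct and follows essentially the same route as the paper: the same perturbation $W^{\pm} = \hat x(\hat y \pm \epsilon)^{\top}$ with $\epsilon$ supported on three columns, the same reduction of all active constraints to the two homogeneous equations $\beta^{\top}\epsilon = 0$ and $\delta^{\top}\epsilon = 0$ (which is exactly the paper's auxiliary system \eqref{eq:auxSystem}), the analogous row argument, and the same conclusion via the SOC-representability of each $2\times 2$-supported piece from~\cite{santana2018convex} together with the fact that the convex hull of a finite union of compact SOCr sets is SOCr.
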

\begin{proof}
First,  note that the non-negativity of $\alpha^k, \gamma^k$ and positivity of $\beta, \delta$ imply that the set is bounded. 

Observe that the following system has a non-trivial solution in $\epsilon$
\begin{equation*}
\begin{bmatrix} 
\beta_{j_1} & \beta_{j_2} & \beta_{j_3} \\ 
\delta_{j_1} & \delta_{j_2} & \delta_{j_3} 
\end{bmatrix}
\begin{bmatrix} \epsilon_1 \\ \epsilon_2 \\ \epsilon_3 \end{bmatrix}
=\begin{bmatrix} 0\\0\end{bmatrix},
\end{equation*}
for every distinct indices $j_1,j_2,j_3$. This  guarantees that the following system has a non-trivial solution in $\epsilon$:
\begin{equation}\label{eq:auxSystem}
\begin{split}
\alpha^k (\beta^\top\hat x)[\beta_{j_1}\epsilon_1+ \beta_{j_2}\epsilon_2+ \beta_{j_3}\epsilon_3] &= 0 \quad k\in[m] \\
\gamma^k (\delta^\top\hat x)[\delta_{j_1}\epsilon_1+ \delta_{j_2}\epsilon_2+ \delta_{j_3}\epsilon_3] &= 0  \quad k\in[m].
\end{split}
\end{equation}

After establishing the above relation, we proceed similar to the proof of Proposition~\ref{prop:twoSide}.  We again claim that  in any extreme point, there are at most two non-zero columns.  By contradiction, let us assume that $\hat W = \hat x \hat y^\top$ is an extreme point with three non-zero columns. Without loss of generality, let these be the first three columns. However, in this case, the following system has a non-trivial solution in $\epsilon$,
\begin{equation*}
\begin{split}
\alpha^k (\beta^\top\hat x)[\beta_{1}\epsilon_1+ \beta_{2}\epsilon_2+ \beta_{3}\epsilon_3] +
\gamma^k (\delta^\top\hat x)[\delta_{1}\epsilon_1+ \delta_{2}\epsilon_2+ \delta_{3}\epsilon_3] = 0  \quad k\in[m],
\end{split}
\end{equation*}
due to \eqref{eq:auxSystem} (with the choice of $j_1=1, j_2=2, j_3=3$). Therefore, two points  defined as $W^{\pm} = \hat x (\hat y \pm \epsilon_1 e_1 \pm \epsilon_2 e_2 \pm  \epsilon_3 e_3)^{\top}$ belong to $\cU_{(n_1,n_1)}^m ( [A^k, b_k]_{k=1}^m)$ for some non-zero $\epsilon$, contradicting to the assumption that $\hat W$ is an extreme point.

Using the same procedure, one can also prove that at an extreme point of $\conv\big(\cU_{(n_1,n_1)}^m ( [A^k, b_k]_{k=1}^m)\big)$, there can be at most two non-zero rows. Therefore, we deduce that  the largest non-zero submatrix of an extreme point is $2\times2$. The rest of the proof is identical to the last part of the proof of Theorem~\ref{prop:twoSide}.
\end{proof}

\section{Application of results to the pooling problem}
\label{sec:poolResults}

In this section, we show how the results presented in Section~\ref{sec:genResults} apply to the pooling problem.

\subsection{Convex hull results and valid inequalities}\label{sec:mainconv}
Our first result of this section, Proposition~\ref{prop:row} presented below, is regarding the convex hull of the set ${\cU}^{\textup{row}}_{(n_1,n_2)}(l,u)$ introduced in (\ref{eq:rowset}). 
\begin{proposition}\label{prop:row}
Consider the set ${\cU}^{\textup{row}}_{(n_1,n_2)}(l,u)$ described in (\ref{eq:rowset})  where we assume $u_i$ is finite for every $i \in [n_1]$. Then, the following hold: 
\begin{enumerate}
\item $\textup{conv}\left({\cU}^{\textup{row}}_{(n_1,n_2)}(l,u)\right)$ is a polyhedral set. 
\item A compact extended formulation of $\textup{conv}\left({\cU}^{\textup{row}}_{(n_1,n_2)}(l,u)\right)$ is given by:
\begin{align}
\sum_{j = 1}^{n_2} t_j &= 1  \label{eq:ex1}\\
l_it_j &\leq W_{ij} \leq u_it_j \ &\forall& i \in [n_1], j \in [n_2]\\
t_j &\geq 0 \ &\forall& j \in [n_2]. \label{eq:ex3}
\end{align}
Therefore, a linear function can be optimized in polynomial time on ${\cU}^{\textup{row}}_{(n_1,n_2)}(l,u)$.
\item Let $\mathcal{I} := \{ i \in [n_1]\,|\, l_i > 0 \}$. Without loss of generality, let us assume that the first $|\cI|$ components of $l$ are positive.
Assuming $u_i > 0$ for all $i \in [n_1]$ (otherwise, we may fix all the variable in the corresponding row to be zero), $\textup{conv}\left({\cU}^{\textup{row}}_{(n_1,n_2)}(l,u)\right)$ is given by:
\begin{align}
\sum_{i = 1}^{n_1}\sum_{j \in T_i}\frac{W_{ij}}{u_i} &\leq  1 & \forall& (T_1, \dots, T_{n_1}) \in \cP_{n_1} ([n_2])
 \label{eq:rowconv1}\\
\sum_{i \in \mathcal{I}}\sum_{j \in T_i}\frac{W_{ij}}{l_i} & \geq 1 & \forall& (T_1, \dots, T_{|\mathcal{I}|})  \in \cP_{|\mathcal{I}|} ([n_2])
\label{eq:rowconv2}\\
l_{i_1} W_{i_2 j} & \leq  u_{i_2} W_{i_1 j} & \forall& j \in [n_2], i_1 \in \mathcal{I}, i_2 \in [n_1] \textup{ and }i_1 \neq i_2 \label{eq:rowconv3} \\
W_{ij} &\geq 0 \  &\forall& i \in [n_1], j \in [n_2], \label{eq:rowconv4}
\end{align}
where (\ref{eq:rowconv2}) are not required (in fact not well-defined) in the convex hull description if $\mathcal{I} = \emptyset$. The inequalities (\ref{eq:rowconv1}) and (\ref{eq:rowconv2}) can be separated in polynomial-time. 
\end{enumerate}
\end{proposition}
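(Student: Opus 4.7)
The plan is to reduce parts (i) and (ii) directly to Theorem~\ref{thm:multrank1}, and then to prove part (iii) by first verifying validity of \eqref{eq:rowconv1}--\eqref{eq:rowconv4} at the extreme points of Theorem~\ref{thm:multrank1}(ii) and afterwards establishing sufficiency by lifting back to the extended formulation of part (ii). For parts (i) and (ii), set $\beta := \mathbf{1}_{n_2}$, encode each upper bound $\sum_j W_{ij} \leq u_i$ by $\alpha^{(i,+)} := e_i \in \mathbb{R}^{n_1}$ with right-hand side $u_i$, and each active lower bound ($i \in \cI$) by $\alpha^{(i,-)} := -e_i$ with right-hand side $-l_i$ (lower bounds with $l_i = 0$ are implied by $W \geq 0$ and may be dropped). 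All resulting $A^k = \alpha^k \beta^\top$ are rank one, and the recession-cone hypothesis \eqref{eq:recbnd} is forced by the upper bound rows alone: for any $z$ in the displayed set one gets $(\alpha^{(i,+)})^\top z = z_i \leq 0$, which together with $z \geq 0$ yields $z = 0$. Theorem~\ref{thm:multrank1} then yields polyhedrality, and specializing \eqref{eq:extended1}--\eqref{eq:extended3} to these $\alpha^k,\beta$ reproduces exactly \eqref{eq:ex1}--\eqref{eq:ex3}.

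For part (iii), validity is read off the extreme points: by Theorem~\ref{thm:multrank1}(ii) each extreme point is a single-column matrix $\gamma e_h^\top$ with $\gamma_i \in \{\max\{l_i,0\}, u_i\}$. On such a point each partition inequality \eqref{eq:rowconv1} collapses to $\gamma_{i^\star}/u_{i^\star} \leq 1$, where $i^\star$ is the unique index with $h \in T_{i^\star}$; \eqref{eq:rowconv2} collapses similarly to $\gamma_{i^\star}/l_{i^\star} \geq 1$; and \eqref{eq:rowconv3} reduces at $j = h$ to $l_{i_1}\gamma_{i_2} \leq u_{i_2}\gamma_{i_1}$, which is immediate from the bound structure on $\gamma$. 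For sufficiency, given $W$ satisfying \eqref{eq:rowconv1}--\eqref{eq:rowconv4} I would set
\[
L_j := \max_{i \in [n_1]} \frac{W_{ij}}{u_i}, \qquad U_j := \min_{i \in \cI} \frac{W_{ij}}{l_i}
\]
(with $U_j := +\infty$ if $\cI = \emptyset$). Inequalities \eqref{eq:rowconv3} give $L_j \leq U_j$; applying \eqref{eq:rowconv1} to the partition $T_i := \{j : i \text{ attains } L_j\}$ (ties broken arbitrarily) gives $\sum_j L_j \leq 1$, and applying \eqref{eq:rowconv2} to the analogous $\argmin$-partition gives $\sum_j U_j \geq 1$. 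Hence one can pick $t_j \in [L_j, U_j]$ with $\sum_j t_j = 1$, so that $(t,W)$ is feasible for \eqref{eq:ex1}--\eqref{eq:ex3}, proving $W \in \conv({\cU}^{\textup{row}}_{(n_1,n_2)}(l,u))$ via part (ii).

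Polynomial-time separation follows immediately: for \eqref{eq:rowconv1} compute $L_j$ per column in $O(n_1 n_2)$ time, and report the $\argmax$-partition if $\sum_j L_j > 1$; separation of \eqref{eq:rowconv2} is symmetric using $\min$ over $\cI$. The main care-point I anticipate is the degenerate case $\cI = \emptyset$, where \eqref{eq:rowconv2} is vacuous and $U_j = +\infty$, so the lifting argument relies on $\sum_j L_j \leq 1$ alone; this must be stated as a separate case but is routine. A minor bookkeeping point is that $\cP_m$ is defined to permit empty blocks, which keeps the $\argmax$-based partition well defined regardless of ties.
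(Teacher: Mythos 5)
Your reduction of parts (i) and (ii) to Theorem~\ref{thm:multrank1} is exactly the paper's (same choice $\beta=e$, $\alpha^k=\pm e_i$), and your separation routine for \eqref{eq:rowconv1}--\eqref{eq:rowconv2} via the column-wise $\argmax$/$\argmin$ partitions is the same as the paper's Propositions in Appendix~\ref{sec:proofprop3}. For part (iii), however, you take a genuinely different route: the paper projects the extended formulation \eqref{eq:ex1}--\eqref{eq:ex3} onto the $W$-space by Fourier--Motzkin elimination, maintaining an inductive invariant as the $t_j$ are eliminated one at a time, whereas you argue both inclusions directly --- validity by evaluating \eqref{eq:rowconv1}--\eqref{eq:rowconv4} on the extreme points $\gamma e_h^\top$ from Theorem~\ref{thm:multrank1}(ii), and sufficiency by exhibiting, for any $W$ satisfying the inequalities, an explicit $t$ with $L_j\le t_j\le U_j$ and $\sum_j t_j=1$ that certifies membership in the extended formulation. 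Your lifting argument is correct: \eqref{eq:rowconv3} (together with $l\le u$) gives $L_j\le U_j$, the two partition families give $\sum_j L_j\le 1\le \sum_j U_j$, and the interval $[L_j,U_j]$ is precisely the set of $t_j$ compatible with $l_it_j\le W_{ij}\le u_it_j$, so the interpolation closes the loop; you also correctly isolate the $\cI=\emptyset$ case. The trade-off is that your argument requires the target inequalities to be guessed in advance (and it transparently explains \emph{why} they suffice: they are exactly the conditions for the column-wise intervals to admit a point summing to one), while the paper's Fourier--Motzkin derivation mechanically produces the inequality classes, at the cost of a longer induction. Both are complete proofs; yours is shorter and arguably more illuminating, and it reuses the same certificate structure that underlies the separation algorithm.
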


Note that parts (i) and (ii) of Proposition~\ref{prop:row} are a corollary of Theorem~\ref{thm:multrank1}, where we choose $\beta = e$ and, for  $i\in[n_1], \ k\in[2n_1]$, we define: $\alpha^k = e_i, \ b_k = u_i$ if $k=i$; $\alpha^k = -e_i, \ b_k = -l_i$ if $k=n_1+i$. For part (iii), we project the extended formulation presented in part (ii) to the space of the original variables $W$ via Fourier-Motzkin procedure. The details of our proof of (iii) is presented in Appendix~\ref{sec:proofprop3}.

We next highlight an interesting connection between the extended formulation presented in Proposition~\ref{prop:row} and McCormick inequalities. In particular, note that since $W$ is a rank-1 matrix, we have that $\frac{W_{ij}}{\sum_{j'} W_{ij'}}$ is a constant independent of the row index $i$ (assuming $\sum_{j'} W_{ij'} \neq 0$). More formally, we may write:
\begin{eqnarray}\label{eq:preextend}
W_{ij} = t_j\left(\sum_{j' = 1}^{n_2}W_{ij'}\right) \qquad \forall i \in [n_1],  j \in [n_2],
\end{eqnarray}
where the fact that the ratio variable $t$ is independent of row index, is indicated with it being indexed only with column index $j$. Note that it is straightforward to see that the $t$ variables satisfy:
\begin{eqnarray}\label{eq:partext}
\sum_{j = 1}^{n_2}t_j = 1, \ t_j \geq 0\qquad \forall j \in [n_2] .	
\end{eqnarray} 
Finally, we may apply McCormick inequalities for (\ref{eq:preextend}), by observing that we have the bounds $0 \leq t_j \leq 1$ and $l_i \leq \sum_{j' = 1}^{n_2}W_{ij'} \leq u_i$, to obtain:
\begin{eqnarray}
 l_i t_j  \le  &W_{ij} &\leq u_it_j \label{eq:MCC12}\\
 u_it_j + \sum_{j' = 1}^{n_2}W_{ij'} - u_i  \leq &W_{ij} & \leq l_it_j + \sum_{j' = 1}^{n_2}W_{ij'} - l_i\label{eq:MCC34}.
\end{eqnarray}
Now, observe that (\ref{eq:partext}) together with the first two inequalities of the McCormick envelopes (\ref{eq:MCC12}) 
above yields the extended formulation. Indeed, one can also check that (\ref{eq:MCC34}) 
is  implied by (\ref{eq:MCC12}) and (\ref{eq:partext}). This connection between McCormick inequalities and the extended formulation of the convex hull will be used later  when we discretize various substructures of the pooling problem (see Section~\ref{subsec:discret}). 

Next, we record another simple application of Theorem~\ref{thm:multrank1} that will be useful for the pooling problem. Consider the set
\begin{eqnarray}\label{eq:rowplusset}
\begin{array}{rl}
{\cU}^{\textup{row}+}_{(n_1,n_2)}(l,u, L, U):= &\{ W \in \mathbb{R}^{n_1 \times n_2}_{+}\, | \\
&\, l_i \leq \sum_{j= 1}^{n_2}W_{ij} \leq u_i, \qquad\forall i \in [n_1]\} \\
&L \leq \sum_{i=1}^{n_1}\sum_{j = 1}^{n_2}W_{ij} \leq U \\
& \textup{rank}(W) \leq 1\}, 
\end{array}
\end{eqnarray}
where we assume $0 < u_i \leq U$ (otherwise we can replace $u_i$ by $U$) and $U \leq \sum_{i =1}^{n_1}u_i$ (otherwise we replace $U$ by $\sum_{i =1}^{n_1}u_i$).

\begin{proposition}\label{prop:rowplus}
Consider the set  ${\cU}^{\textup{row}+}_{(n_1,n_2)}(l,u, L, U)$ described in \eqref{eq:rowplusset}  where we assume that $U$ is finite. Then, the following hold:
\begin{enumerate}
\item
An extended formulation of $\conv\left({\cU}^{\textup{row}+}_{(n_1,n_2)}(l,u, L, U)\right)$ is given by:
\begin{align}
\sum_{j = 1}^{n_2} t_j &= 1 \\
Lt_j &\leq \sum_{i = 1}^{n_1}W_{ij} \leq Ut_j  &\forall& j \in [n_2]\\
l_it_j &\leq W_{ij} \leq u_it_j \ &\forall& i \in [n_1], \forall j \in [n_2]\\
t_j &\geq 0 \ &\forall& j \in [n_2].
\end{align}

\item
Let $\mathcal{I} := \{ i \in [n_1]\,|\, l_i > 0 \}$. Without loss of generality, let us assume that the first $|\cI|$ components of $l$ are positive.
Assuming $u_i > 0$ for all $i \in [n_1]$, 
$\textup{conv}\left({\cU}^{\textup{row+}}_{(n_1,n_2)}(l,u,L,U)\right)$ is given by:
\begin{align}
\sum_{i = 1}^{n_1}\sum_{j \in T_i}\frac{W_{ij}}{u_i} + \frac1U \sum_{i = 1}^{n_1}\sum_{j \in T_0}  W_{ij} &\leq  1 & \forall& (T_0, T_1, \dots, T_{n_1}) \in \cP_{n_1} ([n_2])
 \label{eq:rowplusconv1}\\
\sum_{i \in \mathcal{I}}\sum_{j \in T_i}\frac{W_{ij}}{l_i}  + \frac1L \sum_{i \in \mathcal{I}}\sum_{j \in T_0}  W_{ij}  & \geq 1 & \forall& (T_0, T_1, \dots, T_{|\mathcal{I}|})  \in \cP_{|\mathcal{I}|} ([n_2])
\label{eq:rowplusconv2}\\
\eqref{eq:rowconv3} - \eqref{eq:rowconv4}. \notag
\end{align}
The inequalities (\ref{eq:rowplusconv1}) and (\ref{eq:rowplusconv2}) can be separated in polynomial-time. 
\end{enumerate}
\end{proposition}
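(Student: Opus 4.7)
The plan for Part (i) is a direct application of Theorem~\ref{thm:multrank1}. Every side constraint in (\ref{eq:rowplusset}) has the form $\langle \alpha\beta^{\top}, W\rangle\le b$ with the common column vector $\beta = e\in\mathbb{R}^{n_2}_{++}$: the row-sum bounds use $\alpha=\pm e_i$ (with $b=u_i$ or $-l_i$), and the global bounds use $\alpha=\pm e$ (with $b=U$ or $-L$). The finiteness of $U$ together with the presence of the vectors $\alpha = e_i$ already forces the recession cone in (\ref{eq:recbnd}) to be trivial, so Theorem~\ref{thm:multrank1}(iii) applies; after grouping like constraints, the extended formulation it produces is precisely the one claimed in Part (i).

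For Part (ii), the plan is to project the extended formulation of Part (i) onto the $W$-space by Fourier--Motzkin elimination of $t_1,\dots,t_{n_2}$, mirroring the pattern used for Proposition~\ref{prop:row}(iii) in Appendix~\ref{sec:proofprop3}. For each fixed $j\in [n_2]$, the extended formulation yields
\[
\max\!\Bigl\{0,\ \tfrac{\sum_i W_{ij}}{U},\ \max_{i\in[n_1]} \tfrac{W_{ij}}{u_i}\Bigr\}\ \le\ t_j\ \le\ \min\!\Bigl\{\tfrac{\sum_i W_{ij}}{L},\ \min_{i\in\mathcal{I}} \tfrac{W_{ij}}{l_i}\Bigr\},
\]
with the right-hand side read as $+\infty$ when $\mathcal{I} = \emptyset$ and $L=0$. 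Substituting into $\sum_j t_j = 1$ and selecting, for each $j$, a single active lower bound produces a sum-$\le 1$ inequality; the selection ``$W_{ij}/u_i$ for some $i\in[n_1]$'' assigns $j\in T_i$, while ``$(\sum_i W_{ij})/U$'' assigns $j\in T_0$. Summing over $j$ reproduces (\ref{eq:rowplusconv1}); the symmetric upper-bound choice gives (\ref{eq:rowplusconv2}); and pairing an upper and a lower bound at the same index $j$ recovers (\ref{eq:rowconv3}).

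The hard part will be verifying that Fourier--Motzkin produces only these three families (plus the nonnegativities). I plan to run the same dominance analysis as in Appendix~\ref{sec:proofprop3}, extended to accommodate the additional bound types $(\sum_i W_{ij})/U$ and $(\sum_i W_{ij})/L$ introduced by the global sum constraint. The essential new observation is that any Fourier--Motzkin combination that mixes row-based and global bounds at the same index $j$ is dominated by a convex combination of (\ref{eq:rowplusconv1}), (\ref{eq:rowplusconv2}), and (\ref{eq:rowconv3}); the only genuinely new facet family beyond Proposition~\ref{prop:row} is therefore the distinguished block $T_0$ in the partition. Finally, for separation of (\ref{eq:rowplusconv1}): given $\hat W\ge 0$, for each $j$ evaluate $c_j := \max\bigl\{\tfrac{\sum_i \hat W_{ij}}{U},\ \max_i \tfrac{\hat W_{ij}}{u_i}\bigr\}$ and assign $j$ to $T_0$ or to $T_i$ according to the maximizer; summing the $c_j$ and comparing to $1$ detects the most-violated inequality in the class in $O(n_1 n_2)$ time. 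Class (\ref{eq:rowplusconv2}) is separated analogously using the minimum over the relevant upper-bound ratios.
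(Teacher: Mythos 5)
Your overall route is the same as the paper's: Part (i) as a corollary of Theorem~\ref{thm:multrank1} with $\beta=e$ and $\alpha\in\{\pm e_i\}_{i}\cup\{\pm e\}$, Part (ii) by Fourier--Motzkin elimination of $t_1,\dots,t_{n_2}$ mirroring Appendix~\ref{sec:proofprop3} (the paper does exactly this in Appendix~\ref{sec:proofprop4}), and separation by the column-wise greedy assignment to the block with the largest ratio, which is the paper's Proposition~\ref{prop:sepplus1}. Part (i) and the separation routine are fine as you describe them.

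The gap is in the step you defer: the claim that ``any Fourier--Motzkin combination that mixes row-based and global bounds at the same index $j$ is dominated by a convex combination of \eqref{eq:rowplusconv1}, \eqref{eq:rowplusconv2}, and \eqref{eq:rowconv3}.'' Pairing the lower bound $t_j\ge \frac{1}{U}\sum_i W_{ij}$ with the upper bound $t_j\le \frac{W_{i_1 j}}{l_{i_1}}$ ($i_1\in\mathcal{I}$) produces $l_{i_1}\sum_i W_{ij}\le U\,W_{i_1 j}$, and the symmetric pairing produces $L\,W_{ij}\le u_i\sum_{i'}W_{i'j}$. These are valid for the convex hull but are \emph{not} implied by \eqref{eq:rowplusconv1}--\eqref{eq:rowconv4} in general: summing \eqref{eq:rowconv3} over $i_2$ only yields $l_{i_1}\sum_i W_{ij}\le (\sum_i u_i)W_{i_1 j}$, which is weaker since $U\le\sum_i u_i$. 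Concretely, take $n_1=n_2=2$, $l=(1,0)$, $u=(2,2)$, $L=1$, $U=2$, so $\mathcal{I}=\{1\}$. Every extreme point has a single nonzero column $\gamma$ with $1\le\gamma_1\le 2$ and $\gamma_1+\gamma_2\le 2$, hence $\gamma_2\le 1\le\gamma_1$, so $W_{2j}\le W_{1j}$ holds on the convex hull. The point $W=\begin{pmatrix}0.8 & 0.2\\ 1.0 & 0\end{pmatrix}$ satisfies all of \eqref{eq:rowplusconv1} (the worst partition gives $0.9+0.1=1$), \eqref{eq:rowplusconv2} ($W_{11}+W_{12}=1$), \eqref{eq:rowconv3} ($W_{2j}\le 2W_{1j}$) and \eqref{eq:rowconv4}, yet violates $W_{21}\le W_{11}$, i.e.\ the discarded constraint $l_1(W_{11}+W_{21})\le U W_{11}$. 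So the dominance analysis cannot go through as planned; the mixed single-index pairings must either be retained as an additional family in the description or shown to be redundant under extra assumptions (e.g.\ $U\ge\max_i\{l_i\}\cdot n_1$-type conditions do not suffice; one needs $l_{i_1}(U-u_{i_1})\le$ appropriate bounds). I note that the paper's own induction in Appendix~\ref{sec:proofprop4} silently drops exactly these pairings, so your plan reproduces the paper's argument, gap included --- but as written the step would fail.
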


Part (i) of Proposition~\ref{prop:rowplus} is a corollary of Theorem~\ref{thm:multrank1}. It can also be viewed as an extension of Proposition~\ref{prop:row}(ii), where we further define $\alpha^k=e, \ b^k=U$ if $k=2n_1+1$; and $\alpha^k=-e, \ b^k=-L$ if $k=2n_1+2$. The proof of part (ii) is presented in Appendix~\ref{sec:proofprop4}.

Clearly, all the results above also apply to a set where instead of row bounds, we have column bounds, i.e. the set:
\begin{eqnarray}\label{eq:colset}
{\cU}^{\textup{col}}_{(n_1,n_2)} (l', u'):= \left\{ W \in \mathbb{R}^{n_1 \times n_2}_{+}\, | \,  l_j' \leq \sum_{i= 1}^{n_1}W_{ij} \leq  u_j', \forall j \in [n_2], \textup{rank}(W) \leq 1 \right\},
\end{eqnarray}
and the set ${\cU}^{\textup{col}+}_{(n_1,n_2)}(l',u', L, U)$ defined  analogously to ${\cU}^{\textup{row}+}_{(n_1,n_2)}(l,u, L, U)$.
A natural extension to these results is the intersection of the two sets. The first observation is that convex hull is not polyhedral.
\begin{theorem}\label{thm:2by2}
$\textup{conv} \left({\cU}^{\textup{row}}_{(2,2)}(l,u) \cap {\cU}^{\textup{col}}_{(2,2)} (l', u')\right)$ is SOCr, but not polyhedral.
\end{theorem}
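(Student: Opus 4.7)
The plan is to handle the two claims separately. For the SOCr part, I would reduce to the situation at the end of the proof of Theorem~\ref{prop:twoSide}. When $n_1=n_2=2$, the rank-1 condition collapses to the single quadratic equation $W_{11}W_{22}=W_{12}W_{21}$, while the row and column sum bounds together with $W\ge 0$ form a polyhedron that is bounded whenever $u,u'$ are finite. Hence the set is the intersection of a bounded polyhedron with a single quadratic equation, and~\cite{santana2018convex} gives directly that its convex hull is SOCr.

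For non-polyhedrality, I would exhibit a specific instance whose convex hull carries an uncountable family of extreme points. A convenient choice is $l=l'=(0,0)$ and $u=u'=(1,1)$. Consider the face $F$ where $W_{11}+W_{12}\le 1$ and $W_{11}+W_{21}\le 1$ are both tight. For any rank-1 matrix $W=xy^\top\ge 0$ on $F$, both tight inequalities force $x_1,y_1>0$; normalizing $y_1+y_2=1$ (so $x_1=1$) yields $y_2=1-y_1$ and $x_2=(1-y_1)/y_1$, which produces the one-parameter family
\[
\mathcal{C}:=\left\{\,W(y_1):=\begin{pmatrix}y_1 & 1-y_1 \\ 1-y_1 & (1-y_1)^2/y_1\end{pmatrix}\;\Big|\;y_1\in[1/2,1]\,\right\},
\]
where the restriction $y_1\ge 1/2$ is exactly what is needed for the other two row/column sums to remain in $[0,1]$. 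This is a continuous arc of rank-1 feasible matrices.

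Finally, I would prove that every $W(y_1^*)$ with $y_1^*\in(1/2,1)$ is an extreme point of the convex hull. Any decomposition $W(y_1^*)=\alpha U+(1-\alpha)V$ with $U,V$ feasible and $\alpha\in(0,1)$ must satisfy $U,V\in F$ because the two bounding inequalities are tight at $W(y_1^*)$; and since the rank-1 locus of $F$ is exactly $\mathcal{C}$, both $U$ and $V$ lie on $\mathcal{C}$. Matching the $W_{22}$ coordinate along the decomposition forces $f(y_1^*)=\alpha f(y_1^U)+(1-\alpha)f(y_1^V)$ for $f(y):=(1-y)^2/y=1/y-2+y$, and $f''(y)=2/y^3>0$ implies strict convexity, hence $y_1^U=y_1^V=y_1^*$ and $U=V=W(y_1^*)$. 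Uncountably many extreme points then precludes polyhedrality. The principal difficulty is locating a face on which the rank-1 locus is one-dimensional and strictly convex in the ambient coordinates; the face $F$ is engineered precisely to make this transparent.
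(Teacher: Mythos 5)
Your proof is correct and follows essentially the same route as the paper: SOC-representability is obtained by invoking \cite{santana2018convex} for the single quadratic $W_{11}W_{22}=W_{12}W_{21}$ intersected with a bounded polyhedron, and non-polyhedrality is obtained by exhibiting an uncountable family of extreme points lying on a strictly convex one-parameter curve of rank-one matrices. The only difference is the choice of instance and level of detail --- the paper takes $l=(0,1)$, $u=(1,1)$ and the curve $a\mapsto\bigl(a,\tfrac{a^2}{1-a}\bigr)$, whereas you take $l=l'=(0,0)$, $u=u'=(1,1)$ and work on an explicit tight face, first showing the rank-one locus of that face is exactly your curve before applying strict convexity; your version is, if anything, the more carefully justified of the two.
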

A proof of Theorem~\ref{thm:2by2} is presented in Appendix~\ref{sec:proofthm4}.
Unfortunately, we do not have a complete description of $\textup{conv}({\cU}^{\textup{row}}_{(n_1,n_2)}(l,u) \cap {\cU}^{\textup{col}}_{(n_1, n_2)}(l', u'))$ for general $n_1, n_2 \in \mathbb{Z}_{+}.$ 
We conjecture that optimizing a linear function on ${\cU}^{\textup{row}}_{(n_1,n_2)}(l,u) \cap {\cU}^{\textup{col}}_{(n_1, n_2)}(l', u')$ is NP-hard.

\subsection{Relaxations and restrictions by discretization}\label{subsec:discret}

We now provide inner and outer-approximations of the set ${\cU}^{\textup{row}}_{(n_1,n_2)}(l,u)$ via discretization. See \cite{MISENER2011876, Gupte2017,DeyGupte2015} for more details and examples of application of this technique to the pooling problem. Let us start with the outer-approximation (or relaxation) and the discretization of the $t_j$ variable as 
\begin{equation*}
t_j = \sum_{h=1}^H 2^{-h} z_{jh} + \gamma_j,
\end{equation*}
where $H \in \mathbb{Z}_{++}$ is the discretization level, $z_{jh}$ are binary variables and $\gamma_j$ is a continuous  non-negative variable  upper-bounded by $2^{-H}$. As discussed in the previous section, applying McCormick inequalities to \eqref{eq:preextend} yields the convex hull of ${\cU}^{\textup{row}}_{(n_1,n_2)}(l,u)$. Thus it makes sense to discretize $t$ in this equation, i.e. we can now rewrite \eqref{eq:preextend}    as
\begin{equation*}
W_{ij} = \left(\sum_{j' = 1}^{n_2}W_{ij'}\right) \left( \sum_{h=1}^H 2^{-h}  z_{jh} + \gamma_j \right ) 
 \qquad \forall i \in [n_1], \forall j \in [n_2].
\end{equation*}
Let us define $\alpha_{ijh}:= \left( \sum_{j' = 1}^{n_2}W_{ij'} \right) z_{jh}$ and $\beta_{ij}:= \left( \sum_{j' = 1}^{n_2}W_{ij'} \right) \gamma_{j}$, and write down the McCormick envelopes respectively as 
\begin{align}
 l_i z_{jh}  \le   & \alpha_{ijh} \leq u_i z_{jh} &  \forall i\in[n_1],  \forall j \in [n_2],  \forall h\in[H]  \label{eq:MCC12alpha}  \\
 u_i z_{jh} + \sum_{j' = 1}^{n_2}W_{ij'} - u_i  \leq &  \alpha_{ijh}  \leq l_i z_{jh} + \sum_{j' = 1}^{n_2}W_{ij'} - l_i   &   \forall  i\in[n_1],  \forall j \in [n_2],  \forall h\in[H] \label{eq:MCC34alpha},
\end{align}
and
\begin{align}
 l_i \gamma_j  \le  &\beta_{ij}  \leq u_i \gamma_j   &  \forall  i\in[n_1],  \forall j \in [n_2] \label{eq:MCC12beta}\\
 u_i \gamma_j + 2^{-H} \sum_{j' = 1}^{n_2}W_{ij'} - 2^{-H} u_i  \leq & \beta_{ij}  \leq l_i \gamma_j+ 2^{-H} \sum_{j' = 1}^{n_2}W_{ij'} -2^{-H} l_i&  \forall  i\in[n_1],  \forall j \in [n_2]  \label{eq:MCC34beta}.
\end{align}
Then, we obtain the  following outer-approximation of ${\cU}^{\textup{row}}_{(n_1,n_2)}(l,u)$:
\begin{eqnarray*}
\begin{array}{rl}
{\overline \cD}^{\textup{row}}_{(n_1,n_2,H)}(l,u) :=& \{ W \in \mathbb{R}_+^{n_1\times n_2}  \,|\,  \exists (\alpha , \beta, \gamma, z) \in 
\mathbb{R}^{n_1\times n_2 \times H} \times \mathbb{R}^{n_1\times n_2} \times \mathbb{R}^{H} \times \{0,1\}^{n_2 \times H}  | \\
&\,\eqref{eq:MCC12alpha} - \eqref{eq:MCC34beta},  \\
&  l_i \leq \sum_{j= 1}^{n_2}W_{ij} \leq u_i, \qquad\forall i \in [n_1], \\
& W_{ij} = \sum_{h=1}^H 2^{-h} \alpha_{ijh} + \beta_{ij}  , \qquad\forall i \in [n_1], \forall j \in [n_2] \}.
\end{array}
\end{eqnarray*}

\begin{proposition}\label{prop:D out}
For any $H \in  \mathbb{Z}_{++}$, we have ${\cU}^{\textup{row}}_{(n_1,n_2)}(l,u)  \subseteq {\overline \cD}^{\textup{row}}_{(n_1,n_2,H)}(l,u) $.
\end{proposition}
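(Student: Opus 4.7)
The plan is a direct membership argument: given an arbitrary $W\in{\cU}^{\textup{row}}_{(n_1,n_2)}(l,u)$, I would exhibit auxiliary variables $(\alpha,\beta,\gamma,z)$ so that $(W,\alpha,\beta,\gamma,z)$ lies in the lifted description of ${\overline\cD}^{\textup{row}}_{(n_1,n_2,H)}(l,u)$.

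First, I would use the rank-1 structure to recover the ratio vector $t$. Since $W\ge 0$ and $\rank(W)\le 1$, we can factor $W=xy^{\top}$ with $x\in\mathbb{R}^{n_1}_{+}$ and $y\in\mathbb{R}^{n_2}_{+}$. If $\sum_{j'}y_{j'}>0$, set $t_j:=y_j/\sum_{j'}y_{j'}$; otherwise $W=0$ and I pick any $t\in\mathbb{R}^{n_2}_{+}$ with $\sum_j t_j=1$. In either case $t\ge 0$, $\sum_j t_j=1$, and the identity
\[
W_{ij}=t_j\Bigl(\sum_{j'=1}^{n_2}W_{ij'}\Bigr)\qquad\forall i\in[n_1],\ j\in[n_2]
\]
holds exactly (in the degenerate case both sides are zero). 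Since $t_j\in[0,1]$, the standard truncated binary expansion produces $z_{jh}\in\{0,1\}$ and $\gamma_j\in[0,2^{-H}]$ with $t_j=\sum_{h=1}^{H}2^{-h}z_{jh}+\gamma_j$.

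Next, I would define the lifted products
\[
\alpha_{ijh}:=\Bigl(\sum_{j'=1}^{n_2}W_{ij'}\Bigr)z_{jh},\qquad \beta_{ij}:=\Bigl(\sum_{j'=1}^{n_2}W_{ij'}\Bigr)\gamma_j,
\]
and verify that they satisfy the McCormick envelopes \eqref{eq:MCC12alpha}--\eqref{eq:MCC34beta}. This is immediate because $\alpha_{ijh}$ and $\beta_{ij}$ are honest bilinear products of factors whose ranges are exactly $[l_i,u_i]$ for $\sum_{j'}W_{ij'}$ (by the row bound constraint), $[0,1]$ for $z_{jh}$, and $[0,2^{-H}]$ for $\gamma_j$; the four McCormick inequalities of each group are precisely the standard bilinear envelopes for these ranges. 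Finally, using the definitions above,
\[
\sum_{h=1}^{H}2^{-h}\alpha_{ijh}+\beta_{ij}=\Bigl(\sum_{j'=1}^{n_2}W_{ij'}\Bigr)\Bigl(\sum_{h=1}^{H}2^{-h}z_{jh}+\gamma_j\Bigr)=\Bigl(\sum_{j'=1}^{n_2}W_{ij'}\Bigr)t_j=W_{ij},
\]
so the reconstruction equation in the definition of ${\overline\cD}^{\textup{row}}_{(n_1,n_2,H)}(l,u)$ holds; the row bounds on $W$ are inherited from membership in ${\cU}^{\textup{row}}_{(n_1,n_2)}(l,u)$.

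The only potential snag is the degenerate row case $\sum_{j'}W_{ij'}=0$, but then the row bound forces $l_i=0$, so all lower McCormick bounds collapse to $0$ and both $\alpha_{ijh}$ and $\beta_{ij}$ equal zero, keeping every inequality satisfied. Hence $W\in{\overline\cD}^{\textup{row}}_{(n_1,n_2,H)}(l,u)$, which is exactly the claimed inclusion.
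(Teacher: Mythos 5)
Your proof is correct and follows exactly the construction the paper uses to derive ${\overline \cD}^{\textup{row}}_{(n_1,n_2,H)}(l,u)$ (the paper states the proposition without a separate formal proof, since the preceding derivation --- recover $t$ from the rank-1 factorization, binary-expand it, and note that the true bilinear products always satisfy their McCormick envelopes --- is precisely your argument). Your explicit handling of the degenerate cases $W=0$ and $\sum_{j'}W_{ij'}=0$ is a welcome bit of added care but does not change the route.
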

\noindent We will later use the set ${\overline \cD}^{\textup{row}}_{(n_1,n_2,H)}(l,u) $ to obtain MILP relaxations of the pooling problem.

Now, let us continue with the inner-approximation (or restriction) and discretize the $t_j$ variable as 
\begin{equation*}
t_j =  \frac{ 2^{H} }{2^H-1} \sum_{h=1}^H 2^{-h}  z_{jh}.
\end{equation*}
This time, equation \eqref{eq:preextend}  is rewritten  as
\begin{equation*}
W_{ij} = \left(\sum_{j' = 1}^{n_2}W_{ij'}\right) \left( \frac{ 2^{H} }{2^H-1}  \sum_{h=1}^H 2^{-h}  z_{jh}  \right ) 
 \qquad \forall i \in [n_1], \forall j \in [n_2].
\end{equation*}
Finally, we obtain the  following inner-approximation of ${\cU}^{\textup{row}}_{(n_1,n_2)}(l,u)$:
\begin{eqnarray*}
\begin{array}{rl}
{\underline \cD}^{\textup{row}}_{(n_1,n_2,H)}(l,u) :=& \{ W \in \mathbb{R}_+^{n_1\times n_2}  \,|\,  \exists (\alpha , z) \in 
\mathbb{R}^{n_1\times n_2 \times H} \times \mathbb{R}^{n_1\times n_2} \times \mathbb{R}^{H} \times \{0,1\}^{n_2 \times H}  | \\
&\,\eqref{eq:MCC12alpha} - \eqref{eq:MCC34alpha},  \\
&  l_i \leq \sum_{j= 1}^{n_2}W_{ij} \leq u_i, \qquad\forall i \in [n_1], \\
& W_{ij} = \frac{ 2^{H} }{2^H-1} \sum_{h=1}^H 2^{-h} \alpha_{ijh}   , \qquad\forall i \in [n_1], \forall j \in [n_2] \}.
\end{array}
\end{eqnarray*}

\begin{proposition}\label{prop:D in}
For any $H \in  \mathbb{Z}_{++}$, we have ${\underline \cD}^{\textup{row}}_{(n_1,n_2,H)}(l,u) \subseteq {\cU}^{\textup{row}}_{(n_1,n_2)}(l,u)   $.
\end{proposition}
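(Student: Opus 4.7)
The plan is to verify the three defining conditions of $\cU^{\textup{row}}_{(n_1,n_2)}(l,u)$, namely non-negativity, the row-sum bounds, and the rank-one condition, for any $W \in \underline{\cD}^{\textup{row}}_{(n_1,n_2,H)}(l,u)$. The first two are already explicit in the definition of $\underline{\cD}^{\textup{row}}_{(n_1,n_2,H)}(l,u)$, so the entire content of the proposition reduces to showing $\rank(W) \leq 1$.

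The key observation is that the McCormick envelopes \eqref{eq:MCC12alpha}--\eqref{eq:MCC34alpha} become exact once $z_{jh}$ is binary. First I would fix $i\in[n_1]$, $j\in[n_2]$, $h\in[H]$ and split into two cases. If $z_{jh}=0$, then \eqref{eq:MCC12alpha} forces $\alpha_{ijh}=0$. If $z_{jh}=1$, then \eqref{eq:MCC34alpha} reads $\sum_{j'} W_{ij'} \le \alpha_{ijh} \le \sum_{j'} W_{ij'}$, which forces $\alpha_{ijh}=\sum_{j'} W_{ij'}$. In both cases we obtain the exact bilinear identity
\begin{equation*}
\alpha_{ijh} \;=\; \Bigl(\sum_{j'=1}^{n_2} W_{ij'}\Bigr)\, z_{jh}.
\end{equation*}

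Substituting this identity into the defining equation $W_{ij} = \frac{2^{H}}{2^{H}-1}\sum_{h=1}^H 2^{-h}\alpha_{ijh}$ gives
\begin{equation*}
W_{ij} \;=\; \Bigl(\sum_{j'=1}^{n_2} W_{ij'}\Bigr)\cdot t_j, \qquad \text{where } t_j := \frac{2^{H}}{2^{H}-1}\sum_{h=1}^H 2^{-h} z_{jh}.
\end{equation*}
Since $t_j$ depends only on $j$ and not on $i$, writing $r_i := \sum_{j'} W_{ij'}$ gives the outer-product factorization $W = r\, t^{\top}$, so $\rank(W) \le 1$. Combined with $W \ge 0$ and the row-sum bounds that are part of the definition of $\underline{\cD}^{\textup{row}}_{(n_1,n_2,H)}(l,u)$, this places $W$ in $\cU^{\textup{row}}_{(n_1,n_2)}(l,u)$.

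There is no real obstacle here; the only subtlety is noticing that the binarity of $z_{jh}$ turns both McCormick inequalities into equalities, which is what allows us to recover the exact rank-one product structure. (Note that, in contrast to the outer approximation, no $\beta_{ij}$ or $\gamma_j$ terms appear, so we do not need the corresponding McCormick constraints \eqref{eq:MCC12beta}--\eqref{eq:MCC34beta}.)
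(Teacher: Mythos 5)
Your proof is correct and is essentially the argument the paper relies on implicitly (the paper states Proposition~\ref{prop:D in} without proof, the construction of ${\underline \cD}^{\textup{row}}_{(n_1,n_2,H)}(l,u)$ being designed exactly so that binarity of $z_{jh}$ makes the McCormick envelopes \eqref{eq:MCC12alpha}--\eqref{eq:MCC34alpha} tight). Your case split on $z_{jh}\in\{0,1\}$ correctly recovers the exact identity $\alpha_{ijh}=(\sum_{j'}W_{ij'})z_{jh}$, and the resulting factorization $W=rt^{\top}$ together with the explicit non-negativity and row-sum constraints completes the inclusion.
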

\noindent We will later use the set ${\underline \cD}^{\textup{row}}_{(n_1,n_2,H)}(l,u) $ to obtain MILP restrictions of the pooling problem.

One can analogously define similar inner and outer-approximations, denoted respectively as  ${\underline \cD}^{\textup{col}}_{(n_1,n_2,H)}(l', u') $ and ${\overline \cD}^{\textup{col}}_{(n_1,n_2,H)}( l', u') $, for the set ${\cU}^{\textup{col}}_{(n_1,n_2)}( l', u') $ as well.

\subsection{Application of convex hull results to the pooling problem}\label{sec:mainpool}

Pooling problems constitute an important class of non-convex optimization problems in chemical engineering and process design. In this section, we will first review the \textit{multi-commodity flow formulation} of the generalized pooling problem, and then explain how the convex hull results in Section~\ref{sec:mainconv} can be applied to obtain strong relaxations.

Let us first introduce our notation of the generalized pooling problem, which primarily coincides with~\cite{alfaki2013multi}. Let $G=(N,A)$ be a graph with the node set $N$ and the arc set $A$. We will denote the set of  source (or input), intermediate (or pool) and  terminal (or output) nodes as $S$, $I$ and $T$, respectively. We have that $N=S\cup I \cup T$ and $A \subseteq (S \times (I\cup T)) \cup (I \times (I \cup T))$. 

We will denote the set of source nodes from which there is a path (not necessarily direct) to node $i$ as $S_i$, and the set of terminal nodes to which there is a path (not necessarily direct) from node $i$ as $T_i$. We also define
\[
N_i^+ := \{j  \,|\,  (i,j)\in A\} \ \text{ and } \ N_i^-:=\{j  \,|\,  (j,i) \in A\}.
\]

Let $K$ be the set of specifications tracked. Assume that the specification $k$ of source $s$ is given as $\lambda_{k}^s$ and the desired  specification $k$ of terminal $t$ should be in the interval $[\underline \mu_{k}^t, \overline \mu_{k}^t]$. The capacities of a node $i \in N$ and an arc $(i,j)\in A$ are denoted as  $U_i$ and $u_{ij}$ (we will also assume possibly trivial lower bounds for each node $i$ and arc $(i,j)$ as $L_i$ and $l_{ij}$).


Note that in the standard pooling problem there are no arcs from the set $(I \times (I \cup T))$. While the standard pooling problem is NP-hard to solve~\cite{alfaki2013strong,haugland2016computational}, there are also some positive results. For instance, a polynomial-time $n$-approximation algorithm ($n$ is the number of output nodes) is presented in~\cite{DeyGupte2015}, convex hull of special substructure (with one pool node) has been studied~\cite{luedtke2018strong}, and recently some very special cases of the pooling problem has been shown to be polynomially solvable~\cite{haugland2016computational,haugland2016pooling, boland2017polynomially,baltean2018piecewise}. However, none of these positive results apply for the generalized pooling problem in which the corresponding graph has arcs from the set $(I \times (I \cup T))$. It is well-understood that the generalized pooling problem is more challenging (and realistic) 
 than the standard pooling problem.

In the remainder of this section, we  present different ways to formulate the feasible region of the generalized pooling problem \emph{from the rank-1 perspective}. We will assume that the objective function is linear in terms of decision variables and specify its exact expression when we discuss the computational experiments since the objective function changes from instance to instance. Then, in Section~\ref{sec:comparison}, we compare all the relaxations introduced here with the ones known from literature.

\subsubsection{Source-based rank formulation}
\label{sec:sBasedForm}
Let us define the following two sets of decision variables: Let $f_{ij}$ be the amount of flow from node~$i$ to node $j$ and $x_{ij}^s$ be the amount of flow on arc $(i,j)$ \textit{originated} at the source $s \in S_i$. 

We will now describe the constraints of the generalized pooling problem. First, we start with the flow-related constraints:
\begin{align}
&L_i \le \sum_{j \in N_i^-} f_{ji} \le U_i & &\forall i \in I \cup T \label{eq:in capacity} \\
&L_i \le \sum_{j \in N_i^+} f_{ij} \le U_i & &\forall i \in S \label{eq:out capacity} \\
&l_{ij} \le f_{ij} \le u_{ij} & & \forall (i,j) \in A \cup \{(s,i):\ i\in I, \ s\in S_i\} \label{eq:arc capacity s} \\
&\sum_{j\in N_i^-} x_{ji}^s = \sum_{j\in N_i^+} x_{ij}^s & & \forall i\in I, \forall s\in S_i \label{eq:flow conserve s} \\
&\sum_{s \in S_i} x_{ij}^s = f_{ij} & & \forall (i,j) \in A \label{eq:flow ij def s} \\
&\sum_{j\in N_i^+} x_{ij}^s = f_{si} & & \forall i \in I, \forall s \in S_i \label{eq:flow si def} \\
&  x_{ij}^s \ge 0 & & \forall (i,j) \in A, \forall s \in S_i .
\end{align}

Here, constraints \eqref{eq:in capacity} and \eqref{eq:out capacity} correspond to node capacity while constraint \eqref{eq:arc capacity s} corresponds to the capacity of arc $(i,j)$. We note that there may not exist an arc between each $(s,i)$ pair, in which case we will call the quantity $f_{si}$ a \textit{ghost} flow. Constraint \eqref{eq:flow conserve s} is a flow conservation constraint for a node $i$ and source $s$ while constraints \eqref{eq:flow ij def s} and  \eqref{eq:flow si def} guarantees that the decomposed flow based on the origin sums up to the actual flow for each arc $(i,j)$. 

Next, we give the constraints that guarantee the specification requirements at the terminal nodes:
\begin{align}\label{eq:spec s}
\underline{\mu}_{k}^t  \sum_{j\in N_t^-} f_{jt}  \le
 \sum_{j\in N_t^-} \sum_{s \in S_j} \lambda_{k}^s x_{jt}^s   \le
\overline{\mu}_{k}^t  \sum_{j\in N_t^-} f_{jt}     \qquad \forall t\in T, \forall k\in K.
\end{align}

Finally, we present different ways to formulate the non-convex constraints of the pooling problem. A standard approach in the literature, which leads to the well-known \textit{$pq-$Formulation} \cite{tawarmalani2002convexification, alfaki2013multi}, is to define the proportion variables $q_i^s$ representing the fraction of flow at pool $i$ originated at source $s$, and include the following bilinear constraints:
\begin{equation}\label{eq:pq bilinear}
x_{ij}^s = q_i^s f_{ij}  \qquad \forall (i,j) \in A, \forall s  \in S_i .
\end{equation}
Our key observation is to rewrite the bilinear constraints in \eqref{eq:pq bilinear} as a set of rank restrictions on a matrix consisting of the decomposed flow variables  $ x_{ij}^s $ as follows:
\begin{equation}\label{eq:rank s}
\rank \left( \begin{bmatrix} x_{ij}^s \end{bmatrix}_{(s,j)\in S_i\times N_i^+} \right ) = 1 \qquad  \forall i\in I.
\end{equation}
The consequences of this realization will be discussed in detail when we construct the polyhedral relaxations of the pooling problem.

{\bf Polyhedral relaxations}: We will now present two ways to convexify the source-based formulation of the pooling problem. The first relaxation is obtained by the convexification of the rank constraints \eqref{eq:rank s} via the column-wise extended formulation, and defined over the following polyhedral set:
\begin{equation*}
\mathcal{F}_1^S :=
\left\{
(f,x)  \,|\,  \eqref{eq:arc capacity s}-\eqref{eq:spec s}, \ \begin{bmatrix} x_{ij}^s \end{bmatrix}_{(s,j)} \in  
\conv\left({\cU}^{\textup{col}+}_{(|S_i|, |N_i^+|)} \left ( \begin{bmatrix} l_{ij} \end{bmatrix}_{j} , \begin{bmatrix} u_{ij} \end{bmatrix}_{j}, L_{i} , U_{i} \right)\right), \forall i\in I
\right\}.
\end{equation*} 
We remark that this relaxation is equivalent to the McCormick relaxation of the $pq$-Formulation in which  bilinear equations \eqref{eq:pq bilinear} are convexified via the McCormick envelopes and the following implied constraints are added~\cite{adhya1999lagrangian, tawarmalani2002convexification}: $\sum_{s\in S_i}q^s_i = 1, \forall i \in I, \ L_i q_i^s \leq \sum_{j\in N_i^+}x_{ij}^s \leq U_i q_i^s, \forall i\in I, \forall s\in S_i$.

The second relaxation is a strengthening of the previous one with the addition of the row-wise extended formulation of the rank constraints  \eqref{eq:rank s}, and defined as below:
\begin{equation*}
\mathcal{F}_2^S :=
\left\{
(f,x) \in \mathcal{F}_1^S  \,|\,  \begin{bmatrix} x_{ij}^s \end{bmatrix}_{(s,j)} \in  
\conv\left({\cU}^{\textup{row}+}_{(|S_i|, |N_i^+|)} \left ( \begin{bmatrix} l_{si} \end{bmatrix}_{s} , \begin{bmatrix} u_{si} \end{bmatrix}_{s}, L_{i} , U_{i} \right)\right), \forall i\in I
\right\}.
\end{equation*} 
{We also considered a similar relaxation to $\mathcal{F}_1^S $, in which the row-wise (instead of the column-wise) extended formulation is used. However, we empirically observed that such a relaxation  is consistently weaker and hence omitted from further discussion.}



{\bf Discretization relaxations}: We will now present three ways to relax the source-based formulation of the pooling problem to obtain dual bounds using discretization techniques. The first relaxation is obtained by the discretization of the rank constraints \eqref{eq:rank s} via the column-wise extended formulation, and defined as follows:
\begin{align*}
\cM_1^S(H) := 
\left \{ (f,x)\in \mathcal{F}_1^S  \,|\,   \begin{bmatrix} x_{ij}^s \end{bmatrix}_{(s,j)} \in  
\conv\left({\overline \cD}^{\textup{col}}_{(|S_i|, |N_i^+|, H)} \left ( \begin{bmatrix} l_{ij} \end{bmatrix}_{j} , \begin{bmatrix} u_{ij} \end{bmatrix}_{j} \right)\right), \forall i\in I   \right \}.
\end{align*}
Here, $H\in\mathbb{Z}_+$ defines the discretization level. The other two relaxations are similarly defined as follows:
\begin{align*}
\cM_2^S(H) := 
\left \{ (f,x)\in \mathcal{F}_2^S  \,|\,   \begin{bmatrix} x_{ij}^s \end{bmatrix}_{(s,j)} \in  
\conv\left({\overline \cD}^{\textup{col}}_{(|S_i|, |N_i^+|, H)} \left ( \begin{bmatrix} l_{ij} \end{bmatrix}_{j} , \begin{bmatrix} u_{ij} \end{bmatrix}_{j} \right)\right), \forall i\in I   \right \},\\
\cM_3^S(H) := 
\left \{ (f,x)\in \mathcal{F}_2^S  \,|\,   \begin{bmatrix} x_{ij}^s \end{bmatrix}_{(s,j)} \in  
\conv\left({\overline \cD}^{\textup{row}}_{(|S_i|, |N_i^+|, H)} \left ( \begin{bmatrix} l_{si} \end{bmatrix}_{s} , \begin{bmatrix} u_{si} \end{bmatrix}_{s} \right)\right), \forall i\in I   \right \}.
\end{align*}

We do not consider simultaneously discretization with respect to row-wise and column-wise extended formulation, i.e. ${\overline \cD}^{\textup{col}}_{(|S_i|, |N_i^+|, H)} \left ( \begin{bmatrix} l_{ij} \end{bmatrix}_{j} , \begin{bmatrix} u_{ij} \end{bmatrix}_{j} \right) \cap {\overline \cD}^{\textup{row}}_{(|S_i|, |N_i^+|, H)} \left ( \begin{bmatrix} l_{si} \end{bmatrix}_{s} , \begin{bmatrix} u_{si} \end{bmatrix}_{s} \right) $, since this becomes a very large formulation.

{Following the definition of ${\overline \cD}^{\textup{col}}$, we conclude that $\cM_1^S(H)$ and $\cM_2^S(H)$ are obtained from discretizing the ratio variables $q^s_i$ in (\ref{eq:pq bilinear}) over $\mathcal{F}_1^S$ and $\mathcal{F}_2^S$, respectively. To interpret $\cM_3^S(H)$, we first observe that we can rewrite (\ref{eq:rank s}) as}
\begin{equation}\label{eq:pq bilinear output}
x_{ij}^s = f_{si} q_{ij}  \qquad \forall (i,j) \in A, \forall s  \in S_i ,
\end{equation}
where we have introduced the artificial ratio variables $q_{ij}$. Following the definition of ${\overline \cD}^{\textup{row}}$, we conclude that $\cM_3^S(H)$ is obtained from discretizing these newly introduced ratio variables $q_{ij}$ over $\mathcal{F}_2^S$.

{\bf Discretization restrictions}: We will now present two ways to restrict the source-based formulation of the pooling problem to obtain primal bounds. The first restriction is obtained via the discretization of the rank constraints \eqref{eq:rank s}, and defined as follows:
\begin{equation*}
\cG^{S}_1(H) :=
\left\{
(f,x)  \,|\,  \eqref{eq:arc capacity s}-\eqref{eq:spec s}, \ \begin{bmatrix} x_{ij}^s \end{bmatrix}_{(s,j)} \in  
 {\underline \cD}^{\textup{col}}_{(|S_i|, |N_i^+|,H)} \left ( \begin{bmatrix} l_{ij} \end{bmatrix}_{j} , \begin{bmatrix} u_{ij} \end{bmatrix}_{j}, L_{i} , U_{i} \right), \forall i\in I
\right\},
\end{equation*} 
where $H\in\mathbb{Z}_+$ defines the discretization level. We remark that this restriction is equivalent to discretizing the $q$ variables in each bilinear equation \eqref{eq:pq bilinear} of the $pq$-Formulation.

The second restriction is obtained in a similar way and given below:
\begin{equation*}
\cG^{S}_2(H) :=
\left\{
(f,x)  \,|\,  \eqref{eq:arc capacity s}-\eqref{eq:spec s}, \ \begin{bmatrix} x_{ij}^s \end{bmatrix}_{(s,j)} \in  
 {\underline \cD}^{\textup{row}}_{(|S_i|, |N_i^+|,H)} \left ( \begin{bmatrix} l_{si} \end{bmatrix}_{s} , \begin{bmatrix} u_{si} \end{bmatrix}_{s}, L_{i} , U_{i} \right), \forall i\in I
\right\}.
\end{equation*} 

\subsubsection{Terminal-based rank formulation}
\label{sec:tBasedForm}
Let us define a new set of decision variables $x_{ij}^t$ to denote the amount of flow on arc $(i,j)$ \textit{ended} at the terminal $t \in T_j$. 

Firstly, we again describe the flow constraints:
\begin{align}
&L_i \le \sum_{j \in N_i^-} f_{ji} \le U_i & &\forall i \in I \cup T \label{eq:in capacity terminal} \\
&L_i \le \sum_{j \in N_i^+} f_{ij} \le U_i & &\forall i \in S \label{eq:out capacity terminal} \\
&l_{ij} \le f_{ij} \le u_{ij} & & \forall (i,j) \in A \cup \{(j,t):\ j\in I, \ t\in T_j\} \label{eq:arc capacity t} \\
&\sum_{i\in N_j^-} x_{ij}^t = \sum_{i\in N_j^+} x_{ji}^t & & \forall j\in  I, \forall t\in T_j \label{eq:flow conserve t} \\
&\sum_{t \in T_j} x_{ij}^t = f_{ij} & & \forall (i,j) \in A \label{eq:flow ij def t} \\
&\sum_{i\in N_j^-} x_{ij}^t = f_{jt} & & \forall j \in I, \forall t \in T_j \label{eq:flow it def} \\
&  x_{ij}^t \ge 0 & & \forall (i,j) \in A, \forall t \in T_j .
\end{align}

Secondly, we give the constraints corresponding to the specification requirements:
\begin{align}\label{eq:spec t}
\underline{\mu}_{k}^t  \sum_{j\in N_t^-} f_{jt}  \le
  \sum_{(s,j)\in A: s\in S,t\in T_j } \lambda_k^s x_{sj}^t   \le
\overline{\mu}_{k}^t  \sum_{j\in N_t^-} f_{jt}   \qquad \forall t\in T, \forall k\in K.
\end{align}

To obtain a correct formulation of the pooling problem, the following bilinear equations can be added:
\begin{equation}\label{eq:tp bilinear}
x_{ij}^t = q_j^t f_{ij}  \qquad \forall (i,j) \in A, \forall t  \in T_j.
\end{equation}
This formulation, called the \textit{$tp-$Formulation}, was first proposed in \cite{alfaki2013strong} for the standard pooling problem, and then extended to the generalized pooling in \cite{boland2016new}.

We will again rewrite the bilinear constraints in \eqref{eq:tp bilinear} as a set of rank restrictions on a matrix consisting of the decomposed flow variables  $ x_{ij}^t $ as follows:
\begin{equation}\label{eq:rank t}
\rank \left( \begin{bmatrix} x_{ij}^t \end{bmatrix}_{(i,t)\in N_j^- \times T_j} \right ) = 1 \qquad  \forall j\in I.
\end{equation}

{\bf Polyhedral relaxations}: Similar to the relaxations of the source-based formulation in Section~\ref{sec:sBasedForm}, we define the following two polyhedral relaxations based on the terminal formulation:
\begin{align*}
\mathcal{F}_1^T&:=
\left\{
(f,x)  \,|\,  \eqref{eq:arc capacity t}-\eqref{eq:spec t}, \ \begin{bmatrix} x_{ij}^t \end{bmatrix}_{(i,t)} \in  
\conv\left({\cU}^{\textup{row}+}_{(|N_j^-|, |T_j|)} \left ( \begin{bmatrix} l_{ij} \end{bmatrix}_{i} , \begin{bmatrix} u_{ij} \end{bmatrix}_{i}, L_{j} , U_{j} \right)\right), \forall j \in I
\right\}, \\
\mathcal{F}_2^T &:=
\left\{
(f,x) \in\mathcal{F}_1^T   \,|\,   \begin{bmatrix} x_{ij}^t \end{bmatrix}_{(i,t)} \in 
\conv\left({\cU}^{\textup{col}+}_{(|N_j^-|, |T_j|)}  \left ( \begin{bmatrix} l_{jt} \end{bmatrix}_{t} , \begin{bmatrix} u_{jt} \end{bmatrix}_{t}, L_{j} , U_{j} \right)\right), \forall j \in I
\right\}. \ \ 
\end{align*} 
We remark that relaxation $\mathcal{F}_1^T$ is equivalent to the McCormick relaxation of the $tp$-Formulation in which  bilinear equations \eqref{eq:tp bilinear} are convexified via the McCormick envelopes in addition to the implied constraints $\sum_{t\in T_j}q^t_j = 1, \forall j \in I, \ L_j q_j^t \leq \sum_{i\in N_j^-}x_{ij}^t \leq U_j q_j^t, \forall j\in I, \forall t\in T_j$. The second   relaxation $\mathcal{F}_2^T$ is a strengthening of the first one by the addition of the column-wise extended formulation.

{Like in the case of the source-based formulation, we also considered a similar relaxation to $\mathcal{F}_1^T $, in which the column-wise (instead of the row-wise) extended formulation is used. However, also in this case, we empirically concluded that such a relaxation is weaker.}


{\bf Discretization relaxations}: Similarly to the relaxation of the source-based formulation in Section~\ref{sec:sBasedForm}, we define the following three relaxations based on the terminal formulation:
\begin{align*}
\cM_1^T(H):=
\left\{ (f,x)  \in \mathcal{F}_1^T  \,|\,  \ \begin{bmatrix} x_{ij}^t \end{bmatrix}_{(i,t)} \in  
\conv\left({\overline \cD}^{\textup{row}}_{(|N_j^-|, |T_j|, H)} \left ( \begin{bmatrix} l_{ij} \end{bmatrix}_{i} , \begin{bmatrix} u_{ij} \end{bmatrix}_{i} \right)\right), \forall j \in I
\right\},\\
\cM_2^T(H):=
\left\{ (f,x)  \in \mathcal{F}_2^T  \,|\,  \ \begin{bmatrix} x_{ij}^t \end{bmatrix}_{(i,t)} \in  
\conv\left({\overline \cD}^{\textup{row}}_{(|N_j^-|, |T_j|, H)} \left ( \begin{bmatrix} l_{ij} \end{bmatrix}_{i} , \begin{bmatrix} u_{ij} \end{bmatrix}_{i} \right)\right), \forall j \in I
\right\},\\
\cM_3^T(H):=
\left\{ (f,x)  \in \mathcal{F}_2^T  \,|\,  \ \begin{bmatrix} x_{ij}^t \end{bmatrix}_{(i,t)} \in  
\conv\left({\overline \cD}^{\textup{col}}_{(|N_j^-|, |T_j|, H)} \left ( \begin{bmatrix} l_{jt} \end{bmatrix}_{t} , \begin{bmatrix} u_{jt} \end{bmatrix}_{t} \right)\right), \forall j \in I
\right\}.
\end{align*}

{\bf Discretization restrictions}: Similarly to the restriction of the source-based formulation in Section~\ref{sec:sBasedForm}, we define the following two restrictions based on the terminal formulation:
\begin{align*}
\cG^{T}_1(H) :=
\left\{
(f,x)  \,|\,  \eqref{eq:arc capacity t}-\eqref{eq:spec t}, \ \begin{bmatrix} x_{ij}^t \end{bmatrix}_{(i,t)} \in  
 {\underline \cD}^{\textup{row}}_{(|N_j^-|, |T_j|,H)}  \left ( \begin{bmatrix} l_{ij} \end{bmatrix}_{i} , \begin{bmatrix} u_{ij} \end{bmatrix}_{i}, L_{j} , U_{j} \right), \forall j \in I
\right\},\\
\cG^{T}_2(H) :=
\left\{
(f,x)  \,|\,  \eqref{eq:arc capacity t}-\eqref{eq:spec t}, \ \begin{bmatrix} x_{ij}^t \end{bmatrix}_{(i,t)} \in  
 {\underline \cD}^{\textup{col}}_{(|N_j^-|, |T_j|,H)}  \left ( \begin{bmatrix} l_{jt} \end{bmatrix}_{t} , \begin{bmatrix} u_{jt} \end{bmatrix}_{t}, L_{j} , U_{j} \right), \forall j \in I
\right\}.
\end{align*}  

\subsubsection{Source and terminal-based rank formulation}
\label{sec:stBasedForm}

Let us define another set of decision variables $x_{ij}^{st}$ to denote the amount of flow on arc $(i,j)$ \textit{originated} at the source $s\in S_i$ and \textit{ended} at the terminal $t \in T_j$. In this formulation, first appeared in \cite{boland2016new}, we keep all the flow-related constraints of the source and terminal-based formulations and add the following extra flow constraints:
\begin{align}
&\sum_{j\in N_i^-: s \in S_j} x_{ji}^{st} = \sum_{j\in N_i^+: t \in T_j} x_{ij}^{st} & & \forall i\in  I, \forall s\in S_i, \forall t\in T_i \label{eq:flow conserve st} \\
&\sum_{s \in T_i}\sum_{t \in T_j} x_{ij}^{st} = f_{ij} & & \forall (i,j) \in A \label{eq:flow ij def st} \\
&\sum_{j\in N_i^+}\sum_{t\in T_j} x_{ij}^{st} = f_{si} & & \forall i \in I, \forall s \in S_i \label{eq:flow si def st} \\
&\sum_{i\in N_j^-}\sum_{s\in S_i} x_{ij}^{st} = f_{jt} & & \forall j \in I, \forall t \in T_j \label{eq:flow jt def st} \\
&  x_{ij}^{st} \ge 0 & & \forall (i,j) \in A, \forall s \in T_i , \forall t \in T_j .
\end{align}
The constraints corresponding to the specification requirements in this case are as following:
\begin{align}\label{eq:spec st}
\underline{\mu}_{k}^t  \sum_{j\in N_t^-} f_{jt}  \le
  \sum_{(s,j)\in A: s\in S,t\in T_j } \lambda_k^s x_{sj}^{st}   \le
\overline{\mu}_{k}^t  \sum_{j\in N_t^-} f_{jt}   \qquad \forall t\in T, \forall k\in K. 
\end{align}
Finally, to have a correct formulation we include the following bilinear equations to the model:
\begin{align}
&x_{ij}^{st} = q_{ij}^{st} f_{ij} && \qquad \forall (i,j) \in A, \forall s \in S_i, \forall t  \in T_j  \label{eq: stp bilinear}.
\end{align}
The next bilinear equations are implied, but their  McCormick envelopes can strengthen the relaxation,
\begin{align}
&q_{ij}^{st} = q_{i}^{s} q_{j}^{t} && \qquad \forall (i,j) \in A, \forall s \in S_i, \forall t  \in T_j. \label{eq: st bilinear q}
\end{align}
We will again rewrite the bilinear constraints in \eqref{eq: stp bilinear} as a set of rank restrictions on a matrix consisting of the decomposed flow variables  $ x_{ij}^{st} $ as follows:
\begin{equation}\label{eq:rank st}
\rank \left( \begin{bmatrix} x_{ij}^{st} \end{bmatrix}_{(s,t)\in S_i \times T_j} \right ) = 1 \qquad  \forall (i,j)\in A \cap(I \times I).
\end{equation}

{\bf Polyhedral relaxations}: Similar to the relaxations of the source and terminal-based formulation in Section~\ref{sec:sBasedForm} and \ref{sec:tBasedForm}, we define the following two polyhedral relaxations based on the source-terminal formulation:
\begin{align*}
\mathcal{F}_1^{ST}&:= \left\{
(f,x)  \,|\,  \ \eqref{eq:flow conserve st}-\eqref{eq:spec st}, \
\begin{bmatrix} x_{ij}^{st} \end{bmatrix}_{(s,t)} \in  
\conv\left({\cU}^{\textup{row}+}_{(|S_i|, |T_j|)} \left ( \begin{bmatrix} l_{si} \end{bmatrix}_{s} , \begin{bmatrix} u_{si} \end{bmatrix}_{s}, l_{ij} , u_{ij} \right)\right), \forall (i,j) \in A
\right\} \nonumber\\
& \cap \mathcal{F}_1^{S} \cap \mathcal{F}_1^{T} \cap Mc\\
\mathcal{F}_2^{ST} &:=
\left\{
(f,x) \in\mathcal{F}_1^{ST}   \,|\,  \ \ \begin{bmatrix} x_{ij}^{st} \end{bmatrix}_{(s,t)} \in 
\conv\left({\cU}^{\textup{col}+}_{(|S_i|, |T_j|)}  \left ( \begin{bmatrix} l_{jt} \end{bmatrix}_{t} , \begin{bmatrix} u_{jt} \end{bmatrix}_{t}, l_{ij} , u_{ij} \right)\right), \forall (i,j) \in A
\right\},
\end{align*} 
where $Mc$ denotes the McCormick relaxation of (\ref{eq: st bilinear q}), and we use ``$\cap$''  with slight abuse of notation since the sets do not live in the same variable space.

In our preliminary computations, we found that relaxation $\mathcal{F}_2^{ST}$ is too big and numerically unstable. Thus, we omitted from further discussing this relaxation.

\subsection{{Comparison with previous work}}
\label{sec:comparison}

Some of the relaxations presented in the previous section coincide with the well known relaxations of the pooling problem  from the literature. Table~\ref{table-Known-New} provides a summary as to which of these relaxations are already known and which of them, to the best of our knowledge, are new. 
\begin{table}[H]
\centering
\caption{New relaxations vs. known relaxations for standard and generalized pooling.}
\label{table-Known-New}
\begin{tabular}{ccc}
Formulation          & Standard Pooling & Generalized Pooling \\
\hline
$\mathcal{F}_1^{S}$  & Known \cite{tawarmalani2002convexification}           & Known  \cite{alfaki2013multi}               \\
$\mathcal{F}_1^{T}$  & Known \cite{alfaki2013strong}            & Known \cite{boland2016new}              \\
$\mathcal{F}_1^{ST}$ & Known \cite{boland2016new}            & Known \cite{boland2016new}         \\
$\mathcal{F}_2^{S}$  & Known*    \cite{alfaki2013strong}         & New                 \\
$\mathcal{F}_2^{T}$  & Known*   \cite{alfaki2013strong}          & New                 
\end{tabular}
\end{table}
We recall that $\mathcal{F}_1^{S}$  and $\mathcal{F}_1^{T}$ respectively correspond to the classical $pq$- and $tp$-relaxations for standard pooling. In the generalized pooling setting, $\mathcal{F}_1^{T}$ and  $\mathcal{F}_1^{S}$ correspond to the McCormick relaxations of the \textit{MCF-J-PQ} and \textit{MCF-I-PQ} formulations in \cite{boland2016new}. 
Moreover, $\mathcal{F}_1^{ST}$ coincides with the McCormick relaxation of the \textit{MCF-(I$\times$J)-PQ} formulation, also introduced in \cite{boland2016new}.

To the best of our knowledge, the relaxations $\mathcal{F}_2^{S} $ and $\mathcal{F}_2^{T}$ are new, at least in the case of generalized pooling. In fact, since   we have  $\mathcal{F}_1^{S} \cap \mathcal{F}_1^{T} = \mathcal{F}_2^{S} = \mathcal{F}_2^{T}$ for standard pooling,  these two relaxations happen to be equivalent to the $stp$-relaxation introduced in  \cite{alfaki2013strong}, which is the \textit{intersection} of $pq$- and $tp$-relaxations (the asterisk  in Table  \ref{table-Known-New} signalizes this fact). However, for generalized pooling instances, this equivalence  no longer holds. Therefore, $\mathcal{F}_2^{S} $ and $\mathcal{F}_2^{T}$ are new formulations in the generalized pooling setting. Moreover, our computational experiments have demonstrated that  the intersection of $\mathcal{F}_2^{S}$ and $\mathcal{F}_2^{T}$  yields the best bounds on average as shown in Table~\ref{table:LPvsMILP}, which demonstrates the value of formulating the generalized pooling problems with the rank-1 perspective.

\section{Computational results} \label{sec:maincomp}

\subsection{Software and hardware}

All of our experiments were run on a Windows 10 machine with 64-bit operating
system, x64 based processor with 2.19 GHz, and 32 GB RAM. We used Gurobi 7.5.1 to solve all the linear and mixed-integer linear programs.

\subsection{Instances}

We experimented with three different sets of generalized pooling problem instances. The first set of instances is composed by real-world data from the mining industry~\cite{Natashia2015}, the second set is a collection of instances from \cite{alfaki2013multi}, and the third one was generated by following the recipe proposed in \cite{alfaki2013multi}.

\noindent\textbf{Mining}: We start with a short description of the business problem. Raw material (coal or iron ore, for example) with known quality are blended at points in time in the so-called supply points. Orders for blended product arrive at known points in time and must be satisfied \textit{exactly} by blending previously blended material from the supply points. Costumers specify the minimum quality level for the final blended product. If the raw material is coal, quality level can be determined by the amount of ash and sulfur, for instance. A penalty is incurred if the minimum quality level is violated. The goal is to determine how much material to use from each supply point to meet the demand of each demand while minimizing total penalty. See \cite{Natashia2015} for more details about the problem, including how it can be formulated as an instance of the generalized pooling problem.
We experiment with 24 instances including quarterly, half-yearly and annual planning horizons. Details regarding the size of each instance are displayed in Table~\ref{table_size_coal_inst}.

\noindent\textbf{Literature}: In this class, we first considered 40 instances defined in \cite{alfaki2013multi}. However,  the standard $pq$-relaxation closes the duality gap for 28 of these instances. Therefore, we only focus on instances L1, L2, L3, L4, L5, L6, L12, L13, L14, L15,  {C2} and D1  that have non-trivial duality gap, and  report the  results of the experiments related to these 12 instances. See Table~\ref{table_size_literature_inst} for further information about them.  Instances whose name start with ``L'', were constructed by the authors of  \cite{alfaki2013multi} by adding pool-to-pool arcs to standard instances from the literature while  instances C2 and D1 were randomly generated. The objective function for all 12 instances is to minimize the total cost associated with each arc.

\noindent\textbf{Random}: We generated this set of  instances in the same way the authors of \cite{alfaki2013multi} generated instances C2 and D1. The only difference is that we increased the number of nodes and changed the density of the network to obtain more challenging instances. In total, we constructed 24 instances as described in Table~\ref{table_size_random_inst}. The objective function is to minimize the total cost associated with each arc. {These instances are available at \url{https://sites.google.com/view/asteroide-santana/pooling-problem}.}

\subsection{Primal bounds}

Primal bounds were available from the literature for both Mining~\cite{Natashia2015} and Literature~\cite{alfaki2013multi} instances. We compute primal bounds for the Random instances using the MILP discretization restrictions $\cG^{S}_k(H)$ and $\cG^{T}_k(H)$, $k=1,2$. In our experiments, we choose the discretization level $H=3$ for all the instances--meaning that each discretized variable may assume $2^3$ different values uniformly distributed within its domain. We then use the best primal bound among these four bounds to compute duality gap. Since the solver can take long to close the MILP duality gap, we set time limit of $1800s$ for each instance. If the MILP is not solved within this time limit, the MILP primal bound is taken as primal bound for the corresponding instance.

Table~\ref{table-pb-random-inst} displays the primal bound, the MILP duality gap, and run time, for each discretization restriction. The best performing method is highlighted in bold. Notice that $\cG^{S}_2(H)$, which is not based on the standard $pq$-formulation, has the best average performance. This is true in terms of finding the best primal bound, closing the MILP duality gap as well as running time.

\begin{table}[H]
\centering
\caption{Primal bounds via discretization for Random instances. Here, ``Bound'', ``Gap'' and ``Time'' are the MILP primal bound, the percentage optimality gap of the MILP model upon termination, and run time in seconds, respectively.}
\label{table-pb-random-inst}
\resizebox{\textwidth}{!}{
\begin{tabular}{c|ccc|ccc|ccc|ccc|}
 \multicolumn{1}{c}{}& \multicolumn{3}{c}{$\cG^{S}_1(H)$} & \multicolumn{3}{c}{$\cG^{S}_2(H)$} & \multicolumn{3}{c}{$\cG^{T}_1(H)$} & \multicolumn{3}{c}{$\cG^{T}_2(H)$} \\
Inst & Bound       & Gap       & Time       & Bound       & Gap       & Time       & Bound       & Gap       & Time       & Bound       & Gap       & Time \\
\hline

        F1 &   -1933.62 &       0.00 &      22.72 & {\bf -1968.17} &       0.00 &       5.73 &   -1968.00 &       0.01 &       6.30 &   -1957.15 &       0.00 &      21.05 \\

        F2 &   -4223.78 &       5.63 &    1800.02 & {\bf -4340.93} &       1.52 &    1800.03 &   -4265.44 &       3.69 &    1800.03 &   -4287.22 &       3.56 &    1800.03 \\

        F3 &   -1293.56 &       2.96 &    1800.08 &   -1344.41 &       0.00 &     620.59 & {\bf -1344.43} &       3.34 &    1800.06 &   -1251.69 &      10.29 &    1800.08 \\

        F4 &       0.00 &       0.00 &       1.30 &       0.00 &       0.00 &       3.27 &       0.00 &       0.00 &       2.20 &       0.00 &       0.00 &       3.11 \\

        F5 &   -2735.68 &      12.86 &    1800.02 & {\bf -2892.68} &       4.41 &    1800.05 &   -2721.91 &      13.40 &    1800.03 &   -2685.98 &      14.75 &    1800.03 \\

        F6 & {\bf -4818.56} &       8.05 &    1800.09 &   -4770.56 &       8.40 &    1800.03 &   -4768.38 &       9.64 &    1800.05 &   -4785.93 &       8.67 &    1800.19 \\

        F7 &   -2214.34 &      35.81 &    1800.03 & {\bf -2431.58} &      24.13 &    1800.03 &   -2322.47 &      29.88 &    1800.05 &   -2290.95 &      29.21 &    1800.06 \\

        F8 &   -2493.85 &      27.16 &    1800.17 & {\bf -2895.15} &      10.06 &    1800.05 &   -2893.45 &       4.73 &    1800.03 &   -2733.37 &      14.15 &    1800.06 \\

        F9 &   -2290.91 &      17.91 &    1800.03 & {\bf -2552.66} &       1.97 &    1800.05 &   -2536.85 &       4.54 &    1800.06 &   -2192.65 &      21.55 &    1800.06 \\

       F10 &   -2721.10 &       5.70 &    1800.03 & {\bf -3006.92} &       0.00 &    1292.48 &   -2969.85 &       0.77 &    1800.05 &   -2897.09 &       0.70 &    1800.03 \\

       F11 &   -2104.35 &       0.00 &      26.83 & {\bf -2159.43} &       0.00 &       6.55 &   -2159.43 &       0.00 &       7.03 &   -2143.44 &       0.00 &       8.50 \\

       F12 &   -4541.65 &       2.05 &    1800.02 & {\bf -4594.01} &       0.01 &    1107.47 &   -4583.04 &       1.11 &    1800.06 &   -4546.62 &       2.03 &    1800.05 \\

       F13 &   -2786.81 &       6.54 &    1800.03 & {\bf -2872.81} &       0.01 &    1320.28 &   -2867.10 &       2.22 &    1800.03 &   -2812.18 &       6.09 &    1800.05 \\

       F14 &    -492.11 &       0.00 &      11.27 & {\bf -497.23} &       0.00 &       0.91 &    -497.23 &       0.00 &       0.95 &    -495.81 &       0.00 &       4.14 \\

       F15 &   -2365.46 &       8.43 &    1800.02 & {\bf -2414.56} &      10.43 &    1800.03 &   -2396.89 &      11.97 &    1800.06 &   -2379.11 &      13.12 &    1800.02 \\

       F16 &   -5824.86 &       0.01 &     271.45 &   -5834.27 &       0.00 &     274.00 & {\bf -5838.04} &       0.00 &      59.13 &   -5831.01 &       0.01 &     326.98 \\

       F17 &   -2100.98 &       0.00 &      17.16 &   -2098.36 &       0.00 &       4.41 &   -2098.36 &       0.00 &       8.25 & {\bf -2105.35} &       0.00 &       9.37 \\

       F18 &    -744.71 &       0.01 &     194.59 & {\bf -760.57} &       0.00 &      22.61 &    -760.55 &       0.00 &      14.98 &    -745.66 &       0.01 &     130.30 \\

       F19 &    -555.37 &       0.00 &       1.03 & {\bf -582.87} &       0.00 &       0.53 & {\bf -582.87} &       0.01 &       0.36 &    -555.37 &       0.00 &       0.66 \\

       F20 &   -3419.62 &       5.44 &    1800.02 & {\bf -3599.41} &       0.00 &     115.48 &   -3548.90 &       0.00 &     318.98 &   -3537.36 &       0.01 &     842.69 \\

       F21 &   -8094.23 &      30.97 &    1800.08 & {\bf -8417.90} &      25.51 &    1800.09 &   -7567.30 &      40.54 &    1800.14 &   -7312.21 &      44.58 &    1800.05 \\

       F22 &   -3629.75 &      14.18 &    1800.00 & {\bf -3893.33} &       9.51 &    1800.03 &   -3854.77 &      10.53 &    1800.03 &   -3828.98 &       6.23 &    1800.03 \\

       F23 &   -6611.50 &      45.11 &    1800.06 & {\bf -6667.33} &      43.43 &    1800.08 &   -6026.67 &      61.57 &    1800.09 &   -6458.53 &      49.26 &    1800.08 \\

       F24 &    -999.81 &       0.00 &     140.50 & {\bf -1025.35} &       0.00 &      26.97 &   -1009.19 &       0.00 &      51.94 &   -1005.53 &       0.00 &     186.87 \\

\hline
Ave. & -2874.86 & 9.53 & 1153.65 & \textbf{-2984.19} & 5.81 & 950.07 & -2899.21 & 8.25 & 1069.62 & -2868.30 & 9.34 & 1113.94
\end{tabular}}
\end{table}

\subsection{Dual bounds via LP relaxations}

%
%
%
%
%
%
%
%

We compute dual bounds  using three types of LP relaxations which we will refer to as ``Light", ``Medium" and ``Heavy". Light LP relaxations are given by $\mathcal{F}_k^S, \mathcal{F}_k^T$, $k=1,2$ while Medium LP relaxations are defined as $\mathcal{F}_k^S \cap \mathcal{F}_l^T$, $k,l=1,2$. The only Heavy LP relaxation considered is $\mathcal{F}_1^{ST}$ from  \cite{boland2016new}.
Duality gap and run time of each of these nine methods are reported for all the three sets of instances in Tables~\ref{table-gap-mining-inst}, \ref{table-gap-literature-inst} and \ref{table-gap-random-inst}. The best performing method is highlighted in bold for each instance.

By construction, the method based on $\mathcal{F}_2^S \cap \mathcal{F}_2^T$  gives the strongest dual bound among Light and Medium LP relaxations. It is interesting to observe that it also performs better than the Heavy LP relaxation $\mathcal{F}_{1}^{ST}$
in terms of both average duality gap and run time. In fact, there are only three  instances in which $\mathcal{F}_{1}^{ST}$ is strictly better than  all of the Light and Medium LP relaxations (these are instances  2009Q4, 2011H1 and 2011H2 from the Mining set). On the other hand, the Heavy LP relaxation $\mathcal{F}_{1}^{ST}$ takes significantly longer than lighter LP relaxations and runs into memory issues occasionally. For these instances, we report the best bound (and respective time) among all the other methods (see numbers in parentheses in Tables~\ref{table-gap-mining-inst} and \ref{table-gap-random-inst}), and use those figures in the averages. 

Finally, we point out the relative success of the Light LP relaxations, which typically take much shorter amount of time than their Medium LP counterparts with similar optimality gaps proven. This has motivated us to only focus on the MILP relaxations of the Light LP relaxations as discussed in the next section. We also note that a comparison of the best performing relaxations from the Light and Medium LP relaxations  in terms of their average performances is  provided later in Table~\ref{table:LPvsMILP}.


\begin{table}[!h]
\centering
\caption{Duality gaps for Mining instances via LP. Here, ``Paper'' is the relaxation used in \cite{Natashia2015}, the paper that introduced these instances in the literature.}
\label{table-gap-mining-inst}
\resizebox{\textwidth}{!}{
\begin{tabular}{c|cc|cc|cc|cc|cc|cc|cc|cc|cc|cc|}
 \multicolumn{1}{c}{}   &  \multicolumn{2}{c}{Paper~\cite{Natashia2015}}  & \multicolumn{2}{c}{$\mathcal{F}_1^S$} & \multicolumn{2}{c}{$\mathcal{F}_2^S$} & \multicolumn{2}{c}{$\mathcal{F}_1^T$} & \multicolumn{2}{c}{$\mathcal{F}_2^T$} & \multicolumn{2}{c}{$\mathcal{F}_1^S\cap\mathcal{F}_1^T$} & \multicolumn{2}{c}{$\mathcal{F}_2^S\cap\mathcal{F}_1^T$} & \multicolumn{2}{c}{$\mathcal{F}_1^S\cap\mathcal{F}_2^T$} & \multicolumn{2}{c}{$\mathcal{F}_2^S\cap\mathcal{F}_2^T$} & \multicolumn{2}{c}{$\mathcal{F}_{1}^{ST}$} \\
Inst & Gap & Time  & Gap              & Time             & Gap              & Time             & Gap              & Time             & Gap              & Time             & Gap                            & Time                            & Gap                            & Time                            & Gap                            & Time                            & Gap                            & Time                            & Gap                & Time               \\
\hline
2009H2    & 37.00 & 0.05  & 7.13              & 0.53 & 4.06              & 1.12  & 4.58              & 1.23  & 4.58              & 9.28  & 4.27                                           & 5.13  & \textbf{3.59}                                  & 5.89   & 4.27                                           & 7.16   & \textbf{3.59}                                  & 6.64   & 4.02                   & 1697.27  \\
2009Q3    & 29.36 & 0.03  & 3.84              & 0.06 & 1.59              & 0.23  & 2.16              & 0.08  & 2.16              & 0.17  & 2.01                                           & 0.34    & \textbf{1.45}                                  & 0.69   & 2.01                                           & 0.61   & \textbf{1.45}                                  & 0.47  & 1.96                   & 1.23    \\
2009Q4    & 41.86 & 0.02 & 24.93             & 0.13 & 14.49             & 0.57  & 17.11             & 0.30  & 17.11             & 0.78  & 16.94                                          & 1.20      & 13.77                                          & 1.75   & 16.94                                          & 1.36   & 13.77                                          & 1.98  & \textbf{10.84}         & 22.59 \\
2010Y    & 26.01 & 0.25 & 10.37             & 5.16 & 8.58              & 10.99 & 8.91              & 33.62 & 8.91              & 61.63 & 8.87                                           & 101.08 & \textbf{8.14}                                  & 86.91  & 8.87                                           & 141.66 & \textbf{8.14}                                  & 92.30   & (\textbf{8.14}) & 86.91   \\
2010H1    & 32.42  & 0.08 & 14.64             & 1.02 & 13.42             & 1.95  & 13.38             & 8.97  & 13.38             & 6.23  & 13.32                                          & 10.09   & \textbf{12.77}                                 & 17.47  & 13.32                                          & 18.48  & \textbf{12.77}                                 & 15.73  & 12.78                  & 285.80 \\
2010H2    & 14.83  & 0.09 & 3.94              & 0.81 & 1.84              & 2.09  & 2.45              & 3.27  & 2.45              & 5.34  & 2.45                                           & 10.60  & \textbf{1.80}                                  & 16.55  & 2.45                                           & 15.61  & \textbf{1.80}                                  & 14.89   & 2.26                   & 480.52 \\
2010Q1    & 19.74 & 0.03 & 4.35              & 0.20 & 3.90              & 0.75  & 4.31              & 0.55  & 4.31              & 0.66  & 4.14                                           & 2.03    & \textbf{3.88}                                  & 1.81   & 4.14                                           & 2.50   & \textbf{3.88}                                  & 2.36  & 3.94                   & 13.33  \\
2010Q2    & 35.38  & 0.03 & 21.41             & 0.11 & 18.59             & 0.61  & 18.58             & 0.27  & 18.58             & 0.61  & 18.55                                          & 1.28    & \textbf{17.90}                                 & 1.39   & 18.55                                          & 1.38   & \textbf{17.90}                                 & 0.98  & 18.26                  & 5.66    \\
2010Q3   & 20.33 & 0.02 & 3.84              & 0.09 & 1.54              & 4.05  & 2.56              & 0.13  & 2.56              & 0.39  & 2.56                                           & 0.61    & \textbf{1.49}                                  & 0.94   & 2.56                                           & 1.11   & \textbf{1.49}                                  & 0.98   & 2.23                   & 4.20   \\
2010Q4   & 28.62 & 0.05 & 11.43             & 0.12 & 9.95              & 0.36  & 9.43              & 0.58  & 9.43              & 0.94  & 9.14                                           & 1.00    & \textbf{8.83}                                  & 1.37   & 9.14                                           & 1.48   & \textbf{8.83}                                  & 1.56  & 8.75                   & 47.97  \\
2011Y   & 19.30 & 0.14 & 2.45              & 2.83 & 1.50              & 7.64  & 1.43              & 7.95  & 1.43              & 16.39 & 1.41                                           & 57.67   & \textbf{1.25}                                  & 322.28 & 1.41                                           & 60.89  & \textbf{1.25}                                  & 88.55 & 1.28                   & 1161.44 \\
2011H1   & 9.07 & 0.06  & 2.30              & 0.48 & 1.40              & 0.91  & 1.24              & 0.83  & 1.24              & 1.91  & 1.23                                           & 2.36   & 1.13                                           & 1.95   & 1.23                                           & 2.77   & 1.13                                           & 5.39  & \textbf{1.10}          & 14.53   \\
2011H2   & 22.21 & 0.06 & 2.17              & 0.77 & 1.43              & 1.56  & 1.37              & 1.53  & 1.37              & 2.38  & 1.34                                           & 7.75   & 1.26                                           & 10.12  & 1.34                                           & 12.62  & 1.26                                           & 21.31 & \textbf{1.24}          & 172.06  \\
2011Q1   & 10.51 & 0.02 & 1.78              & 0.04 & 1.03              & 0.15  & 0.83              & 0.11  & 0.83              & 0.34  & 0.82                                           & 0.52    & \textbf{0.72}                                  & 0.53   & 0.82                                           & 0.47   & \textbf{0.72}                                  & 0.77  & \textbf{0.72}          & 1.16   \\
2011Q2   & 4.04  & 0.02 & 3.19              & 0.05 & 2.29              & 0.16  & 2.53              & 0.12  & 2.53              & 0.40  & 2.53                                           & 0.34   & \textbf{2.10}                                  & 0.45   & 2.53                                           & 0.63   & \textbf{2.10}                                  & 0.98   & 2.24                   & 3.75   \\
2011Q3   & 10.04 & 0.00 & 0.39              & 0.02 & 0.07              & 0.03  & 0.15              & 0.03  & 0.15              & 0.06  & 0.15                                           & 0.14   & \textbf{0.07}                                  & 0.08   & 0.15                                           & 0.11   & \textbf{0.07}                                  & 0.14   & 0.15                   & 0.25   \\
2011Q4   & 16.09 & 0.02 & 1.01              & 0.06 & 0.57              & 0.10  & 0.99              & 0.09  & 0.99              & 0.30  & 0.90                                           & 0.53   & \textbf{0.56}                                  & 0.73   & 0.90                                           & 0.63   & \textbf{0.56}                                  & 0.97   & 0.83                   & 3.08   \\
2012Y   & 8.20 & 0.11  & 2.79              & 1.78 & 1.72              & 5.61  & 2.37              & 7.56  & 2.37              & 16.38 & 2.19                                           & 21.64 & \textbf{1.57}                                  & 30.39  & 2.19                                           & 29.50  & \textbf{1.57}                                  & 41.34   & (\textbf{1.57})                   & 30.39    \\
2012H1   & 4.99  & 0.08 & 3.35              & 1.27 & 2.20              & 2.50  & 3.00              & 2.34  & 3.00              & 5.17  & 2.77                                           & 10.77  & \textbf{2.02}                                  & 16.66  & 2.77                                           & 13.19  & \textbf{2.02}                                  & 28.83  & (\textbf{2.02})                   & 16.66   \\
2012H2   & 10.21 & 0.02 & 0.97              & 0.11 & 0.42              & 0.47  & 0.74              & 0.19  & 0.74              & 0.33  & 0.66                                           & 1.02   & \textbf{0.42}                                  & 0.72   & 0.66                                           & 0.91   & \textbf{0.42}                                  & 1.33  & 0.64                   & 4.75    \\
2012Q1   & 13.93 & 0.01 & 9.31              & 0.03 & 4.43              & 0.08  & 8.66              & 0.09  & 8.66              & 0.22  & 8.25                                           & 0.41    & \textbf{4.16}                                  & 0.33   & 8.25                                           & 0.31   & \textbf{4.16}                                  & 0.75  & 8.13                   & 2.37   \\
2012Q2   & 1.68  & 0.02 & 0.70              & 0.03 & 0.30              & 0.09  & 0.49              & 0.08  & 0.49              & 0.23  & 0.49                                           & 0.34    & \textbf{0.21}                                  & 0.28   & 0.49                                           & 0.53   & \textbf{0.21}                                  & 0.41  & 0.49                   & 0.95   \\
2012Q3   & 5.92  & 0.02 & 1.13              & 0.05 & 0.56              & 0.13  & 0.99              & 0.06  & 0.99              & 0.20  & 0.92                                           & 0.38    & \textbf{0.56}                                  & 0.33   & 0.92                                           & 0.42   & \textbf{0.56}                                  & 0.86  & 0.89                   & 1.52   \\
2012Q4   & 25.81 & 0.00 & 4.91              & 0.02 & 3.50              & 0.02  & 1.91              & 0.03  & 1.91              & 0.03  & 1.91                                           & 0.05     & \textbf{1.91}                                  & 0.06   & 1.91                                           & 0.08   & \textbf{1.91}                                  & 0.09  & 1.91                   & 0.22  \\
\hline
Ave. & 18.65 & 0.05	& 5.93	& 0.66	& 4.14	& 1.76	& 4.59	& 2.92	& 4.59	& 5.43	& 4.49	& 9.89	& \textbf{3.82}	& 21.65	& 4.49	& 13.10 &	\textbf{3.82}	& 13.73	& 4.02	&169.11

\end{tabular}}
\end{table}

\begin{table}[!h]
\centering
\caption{Duality gaps for Literature instances via LP.}
\label{table-gap-literature-inst}
\resizebox{\textwidth}{!}{
\begin{tabular}{c|cc|cc|cc|cc|cc|cc|cc|cc|cc|}
 \multicolumn{1}{c}{}    & \multicolumn{2}{c}{$\mathcal{F}_1^S$} & \multicolumn{2}{c}{$\mathcal{F}_2^S$} & \multicolumn{2}{c}{$\mathcal{F}_1^T$} & \multicolumn{2}{c}{$\mathcal{F}_2^T$} & \multicolumn{2}{c}{$\mathcal{F}_1^S\cap\mathcal{F}_1^T$} & \multicolumn{2}{c}{$\mathcal{F}_2^S\cap\mathcal{F}_1^T$} & \multicolumn{2}{c}{$\mathcal{F}_1^S\cap\mathcal{F}_2^T$} & \multicolumn{2}{c}{$\mathcal{F}_2^S\cap\mathcal{F}_2^T$} & \multicolumn{2}{c}{$\mathcal{F}_{1}^{ST}$} \\
Inst & Gap              & Time             & Gap              & Time             & Gap              & Time             & Gap              & Time             & Gap                            & Time                            & Gap                            & Time                            & Gap                            & Time                            & Gap                            & Time                            & Gap                & Time               \\
\hline
L1             & 1.01              & 0.00          & \textbf{0.86}     & 0.00          & 1.01              & 0.00          & 1.01              & 0.00          & 1.01                                           & 0.02                 & \textbf{0.86}                                  & 0.00          & 1.01                                           & 0.00          & \textbf{0.86}                                  & 0.00         & 1.01                   & 0.00    \\
L2             & 55.24             & 0.00          & 55.24             & 0.02          & 55.74             & 0.00          & \textbf{52.83}    & 0.00          & 55.24                                          & 0.00               & 55.24                                          & 0.02          & \textbf{52.83}                                 & 0.02          & \textbf{52.83}                                 & 0.00      & 55.24                  & 0.02         \\
L3             & \textbf{4.55}     & 0.00          & \textbf{4.55}     & 0.00          & \textbf{4.55}     & 0.00          & \textbf{4.55}     & 0.00          & \textbf{4.55}                                  & 0.00                 & \textbf{4.55}                                  & 0.02          & \textbf{4.55}                                  & 0.00          & \textbf{4.55}                                  & 0.00       & \textbf{4.55}          & 0.03      \\
L4            & \textbf{2.45}     & 0.02          & \textbf{2.45}     & 0.06          & \textbf{2.45}     & 0.00          & \textbf{2.45}     & 0.00          & \textbf{2.45}                                  & 0.02                 & \textbf{2.45}                                  & 0.05          & \textbf{2.45}                                  & 0.03          & \textbf{2.45}                                  & 0.09       & \textbf{2.45}          & 0.16      \\
L5            & 10.80             & 0.02          & 10.80             & 0.00          & 11.26             & 0.00          & \textbf{9.60}     & 0.02          & 10.80                                          & 0.02                  & 10.80                                          & 0.02          & \textbf{9.60}                                  & 0.02          & \textbf{9.60}                                  & 0.03       & 10.80                  & 0.03     \\
L6            & 22.22             & 0.00          & 22.06             & 0.00          & \textbf{20.37}    & 0.00          & \textbf{20.37}    & 0.00          & \textbf{20.37}                                 & 0.00          & \textbf{20.37}         & 0.00          & \textbf{20.37}                                 & 0.00          & \textbf{20.37}                                 & 0.00          & \textbf{20.37}                                 & 0.00          \\
L12           & \textbf{25.00}    & 0.02          & \textbf{25.00}    & 0.00          & \textbf{25.00}    & 0.00          & \textbf{25.00}    & 0.00          & \textbf{25.00}                                 & 0.00          & \textbf{25.00}         & 0.00          & \textbf{25.00}                                 & 0.00          & \textbf{25.00}                                 & 0.00          & \textbf{25.00}                                 & 0.00          \\
L13           & \textbf{66.67}    & 0.00          & \textbf{66.67}    & 0.00          & \textbf{66.67}    & 0.00          & \textbf{66.67}    & 0.00          & \textbf{66.67}                                 & 0.00          & \textbf{66.67}         & 0.00          & \textbf{66.67}                                 & 0.00          & \textbf{66.67}                                 & 0.00          & \textbf{66.67}                                 & 0.00          \\
L14           & 16.67             & 0.00          & 16.67             & 0.00          & 16.67             & 0.00          & \textbf{6.67}     & 0.00          & 16.67                                          & 0.00                   & 16.67                                          & 0.00          & \textbf{6.67}                                  & 0.00          & \textbf{6.67}                                  & 0.00         & 16.67                  & 0.00  \\
L15            & 37.41             & 0.00          & \textbf{25.88}    & 0.00          & \textbf{25.88}    & 0.00          & \textbf{25.88}    & 0.00          & \textbf{25.88}                                 & 0.02               & \textbf{25.88}                                 & 0.00          & \textbf{25.88}                                 & 0.00          & \textbf{25.88}                                 & 0.02         & \textbf{25.88}         & 0.00      \\
C2             & \textbf{1.23}     & 0.03          & \textbf{1.23}     & 0.02          & \textbf{1.23}     & 0.03          & \textbf{1.23}     & 0.03          & \textbf{1.23}                                  & 0.08                & \textbf{1.23}                                  & 0.08          & \textbf{1.23}                                  & 0.09          & \textbf{1.23}                                  & 0.06          & \textbf{1.23}          & 0.33    \\
D1             & \textbf{1.06}     & 0.09          & \textbf{1.06}     & 0.25          & \textbf{1.06}     & 0.12          & \textbf{1.06}     & 0.22          & \textbf{1.06}                                  & 0.55                 & \textbf{1.06}                                  & 0.94          & \textbf{1.06}                                  & 0.94          & \textbf{1.06}                                  & 1.17        & \textbf{1.06}          & 3.92     \\
\hline
Ave.  & 20.36    & 0.01 & 19.37    & 0.03 & 19.32    & 0.01 & 18.11    & 0.02 & 19.24                                & 0.06  & 19.23                                & 0.09 & 18.11                                 & 0.09 & \textbf{18.10}                             & 0.11 & 19.24         & 0.37
\end{tabular}}
\end{table}

\begin{table}[!h]
\centering
\caption{Duality gaps for Random instances via LP.}
\label{table-gap-random-inst}
\resizebox{\textwidth}{!}{
\begin{tabular}{c|cc|cc|cc|cc|cc|cc|cc|cc|cc|}
 \multicolumn{1}{c}{}   & \multicolumn{2}{c}{$\mathcal{F}_1^S$} & \multicolumn{2}{c}{$\mathcal{F}_2^S$} & \multicolumn{2}{c}{$\mathcal{F}_1^T$} & \multicolumn{2}{c}{$\mathcal{F}_2^T$} & \multicolumn{2}{c}{$\mathcal{F}_1^S\cap\mathcal{F}_1^T$} & \multicolumn{2}{c}{$\mathcal{F}_2^S\cap\mathcal{F}_1^T$} & \multicolumn{2}{c}{$\mathcal{F}_1^S\cap\mathcal{F}_2^T$} & \multicolumn{2}{c}{$\mathcal{F}_2^S\cap\mathcal{F}_2^T$} & \multicolumn{2}{c}{$\mathcal{F}_{1}^{ST}$} \\
Inst & Gap              & Time             & Gap              & Time             & Gap              & Time             & Gap              & Time             & Gap                            & Time                            & Gap                            & Time                            & Gap                            & Time                            & Gap                            & Time                            & Gap                & Time               \\
\hline
F1            & \textbf{3.79}     & 0.02          & \textbf{3.79}     & 0.02          & 5.02              & 0.03          & 4.09              & 0.06          & \textbf{3.79}                                  & 0.05                 & \textbf{3.79}                                  & 0.12           & \textbf{3.79}                                  & 0.09           & \textbf{3.79}                                  & 0.09             & \textbf{3.79}          & 0.22  \\
F2            & 4.81              & 0.39          & 4.60              & 1.02          & 5.15              & 0.41          & 4.50              & 0.89          & 4.30                                           & 1.27                   & 4.29                                           & 3.43           & \textbf{4.28}                                  & 3.06           & \textbf{4.28}                                  & 5.98         & 4.29                   & 13.70   \\
F3            & 13.52             & 0.27          & \textbf{13.19}    & 1.27          & 14.57             & 0.70          & 14.57             & 1.41          & \textbf{13.19}                                 & 1.55                 & \textbf{13.19}                                 & 2.82           & \textbf{13.19}                                 & 2.00           & \textbf{13.19}                                 & 6.44           & 13.19                  & 22.39    \\
F4            & \textbf{0.00}     & 0.14          & \textbf{0.00}     & 2.48          & \textbf{0.00}     & 1.03          & \textbf{0.00}     & 1.72          & \textbf{0.00}                                  & 1.31                   & \textbf{0.00}                                  & 4.80           & \textbf{0.00}                                  & 3.37           & \textbf{0.00}                                  & 6.25           & \textbf{0.00}          & 6.61  \\
F5            & 14.58             & 0.42          & 11.02             & 1.81          & 13.06             & 0.34          & 11.96             & 1.06          & 11.71                                          & 1.19                 & \textbf{10.97}                                 & 3.59           & 11.66                                          & 3.03           & \textbf{10.97}                                 & 6.19           & 11.65                  & 22.69   \\
F6            & 10.69             & 0.31          & 10.03             & 1.73          & 12.25             & 0.27          & 10.45             & 1.17          & 10.15                                          & 3.19                  & 9.92                                           & 8.11           & 9.97                                           & 6.56           & \textbf{9.91}                                  & 12.42        & 9.94                   & 60.92    \\
F7            & 33.72             & 1.05          & 33.09             & 3.25          & 34.91             & 1.20          & 34.04             & 5.08          & 33.09                                          & 5.31                 & 33.09                                          & 12.50          & \textbf{33.05}                                 & 12.45          & \textbf{33.05}                                 & 22.45           & 33.09                  & 86.13  \\
F8            & 17.15             & 0.58          & \textbf{16.90}    & 1.77          & 17.46             & 0.53          & 17.06             & 1.81          & \textbf{16.90}                                 & 4.22                & \textbf{16.90}                                 & 8.47           & \textbf{16.90}                                 & 7.89           & \textbf{16.90}                                 & 14.05           & \textbf{16.90}         & 22.58   \\
F9            & 18.13             & 0.50          & 17.17             & 1.03          & 20.75             & 0.38          & 17.78             & 1.15          & 17.24                                          & 3.48              & \textbf{17.15}                                 & 8.91           & 17.16                                          & 5.77           & \textbf{17.15}                                 & 16.30        & 17.16                  & 19.64        \\
F10           & 7.84              & 0.31          & \textbf{7.81}     & 1.30          & 8.08              & 0.61          & 7.86              & 1.41          & \textbf{7.81}                                  & 2.66                 & \textbf{7.81}                                  & 7.27           & \textbf{7.81}                                  & 4.25           & \textbf{7.81}                                  & 9.17            & \textbf{7.81}          & 12.98  \\
F11           & 5.43              & 0.02          & 5.32              & 0.05          & 5.71              & 0.03          & 5.20              & 0.08          & 5.28                                           & 0.06                  & 5.28                                           & 0.16           & \textbf{5.19}                                  & 0.16           & \textbf{5.19}                                  & 0.23        & 5.28                   & 0.39      \\
F12           & 4.57              & 0.14          & 4.54              & 0.30          & 5.14              & 0.19          & 4.78              & 0.34          & 4.48                                           & 1.06                 & 4.48                                           & 1.69           & \textbf{4.47}                                  & 1.64           & \textbf{4.47}                                  & 2.31       & 4.48                   & 8.73        \\
F13           & 7.20              & 0.67          & 7.15              & 2.11          & 8.12              & 0.69          & 7.92              & 1.81          & 7.17                                           & 1.86               & 7.14                                           & 4.45           & 7.16                                           & 3.84           & \textbf{7.13}                                  & 7.48       & 7.16                   & 32.11         \\
F14           & \textbf{0.00}     & 0.08          & \textbf{0.00}     & 0.48          & \textbf{0.00}     & 0.30          & \textbf{0.00}     & 0.80          & \textbf{0.00}                                  & 0.95                 & \textbf{0.00}                                  & 2.00           & \textbf{0.00}                                  & 2.02           & \textbf{0.00}                                  & 1.75             & \textbf{0.00}          & 3.83  \\
F15           & 17.95             & 0.23          & 17.88             & 0.78          & 18.82             & 0.39          & 18.33             & 1.61          & 16.36                                          & 1.84               & \textbf{16.33}                                 & 2.77           & 16.36                                          & 3.03           & \textbf{16.33}                                 & 6.06        & 16.36                  & 14.53        \\
F16           & 4.64              & 0.06          & 4.29              & 0.09          & 4.41              & 0.05          & 4.31              & 0.11          & 4.28                                           & 0.28                  & \textbf{4.26}                                  & 0.42           & 4.28                                           & 0.38           & \textbf{4.26}                                  & 0.64       & 4.27                   & 2.31       \\
F17           & 10.24             & 0.02          & 10.24             & 0.05          & 9.63              & 0.06          & \textbf{9.62}     & 0.11          & \textbf{9.62}                                  & 0.12                & \textbf{9.62}                                  & 0.16           & \textbf{9.62}                                  & 0.17           & \textbf{9.62}                                  & 0.27            & \textbf{9.62}          & 0.64    \\
F18           & \textbf{16.63}    & 0.09          & \textbf{16.63}    & 0.34          & \textbf{16.63}    & 0.19          & \textbf{16.63}    & 0.56          & \textbf{16.63}                                 & 0.80              & \textbf{16.63}                                 & 0.73           & \textbf{16.63}                                 & 1.08           & \textbf{16.63}                                 & 2.53             & \textbf{16.63}         & 1.87     \\
F19           & \textbf{1.06}     & 0.02          & \textbf{1.06}     & 0.02          & 2.73              & 0.03          & \textbf{1.06}     & 0.03          & \textbf{1.06}                                  & 0.03                  & \textbf{1.06}                                  & 0.03           & \textbf{1.06}                                  & 0.05           & \textbf{1.06}                                  & 0.09             & \textbf{1.06}          & 0.08 \\
F20           & 12.74             & 0.11          & 12.11             & 0.16          & 13.38             & 0.14          & 12.28             & 0.31          & 11.52                                          & 0.91                 & 11.48                                          & 0.88           & 11.50                                          & 1.00           & \textbf{11.46}                                 & 1.34         & 11.52                  & 2.63      \\
F21           & 26.02             & 12.38         & 25.57             & 55.88         & 26.49             & 8.72          & 25.71             & 50.89         & 25.42                                          & 70.72               & 25.40                                          & 165.03         & 25.37                                          & 133.67         & \textbf{25.36}                                 & 310.98           & (\textbf{25.36})                  & 310.98 \\
F22           & 22.64             & 0.23          & 22.17             & 0.55          & 23.43             & 0.34          & 22.61             & 0.66          & 21.91                                          & 1.42                 & \textbf{21.91}                                 & 3.19           & \textbf{21.91}                                 & 2.86           & \textbf{21.91}                                 & 4.34            & \textbf{21.91}         & 10.83  \\
F23           & 43.92             & 10.64         & 43.58             & 49.09         & 47.04             & 12.15         & 45.05             & 49.55         & 43.39                                          & 53.61               & 43.35                                          & 493.14         & 43.16                                          & 224.12         & \textbf{43.13}                                 & 207.30          & (\textbf{43.13})                  & 207.30 \\
F24           & 11.19             & 0.22          & 11.19             & 0.50          & 11.19             & 1.02          & \textbf{11.18}    & 2.23          & 11.19                                          & 2.23                 & 11.19                                          & 5.81           & \textbf{11.18}                                 & 7.45           & \textbf{11.18}                                 & 11.38          & 11.19                  & 5.44    \\
\hline
Ave. & 12.85    & 1.20 & 12.47    & 5.25 & 13.50    & 1.24 & 12.79    & 5.20 & 12.35                                 & 6.67  & 12.30                                 & 30.85 & 12.32                                 & 17.91 & \textbf{12.28}                                 & 27.34 & 12.32         & 36.23
\end{tabular}}
\end{table}

\subsection{Dual bounds via MILP relaxations}

Besides using the LP relaxations, we also compute dual bounds for all the instances using the MILP discretization relaxations 
$\cM^{S}_k(H)$ and $\cM^{T}_k(H)$, $k=1,2,3$. {Recall that $\cM^{S}_1(H)$ and $\cM^{S}_2(H)$ are obtained from discretizing the ratio variables $q^s_j$ in (\ref{eq:pq bilinear}), whereas $\cM^{S}_3(H)$ is obtained from discretizing the newly introduced ratio variables $q_{ij}$ in (\ref{eq:pq bilinear output}). Similar interpretation holds for the terminal-based formulations.}

 In our experiments, we choose the discretization level $H=3$ for all the instances--meaning that the domain of each discretized variable is partitioned into $2^3$ intervals of equal length. Since the solver can take long to close the MILP duality gap, we set time limit of $1800s$ for each instance. If the MILP is not solved within this time limit, the MILP dual bound is taken as dual bound for the corresponding instance. The results for all instances are reported in Tables~\ref{table-MILPgap-mining-inst}, \ref{table-MILPgap-literature-inst} and \ref{table-MILPgap-random-inst}. The best performing method is highlighted in bold.

For the Mining instances, $\mathcal{M}_3^S(H)$ is the best performing method for 19 out of the 24 instances (see Table~\ref{table-MILPgap-mining-inst}). The average run time of $\mathcal{M}_3^S(H)$ is also one of the best.
For the Literature instances, the terminal-based formulation works better. As we can see in Table~\ref{table-MILPgap-literature-inst}, on average, $\mathcal{M}_3^T(H)$ closes almost twice more gap than the second best performing method.
For the Random instances, $\mathcal{M}_3^S(H)$ is again the best performing method. On average, $\mathcal{M}_3^S(H)$ yields the best gap and the best run time (see Table~\ref{table-MILPgap-random-inst}). However, there are instances in which $\mathcal{M}_3^T(H)$ performs significantly better, for example, instances F10 and F11. Thus, it is difficult to advise a single method in this case.

\begin{table}
\centering
\caption{Duality gaps via discretization for Mining instances.}
\label{table-MILPgap-mining-inst}
\resizebox{\textwidth}{!}{
\begin{tabular}{c|cc|cc|cc|cc|cc|cc|}
\multicolumn{1}{c}{}     & \multicolumn{2}{c}{$\mathcal{M}_1^S(H)$} & \multicolumn{2}{c}{$\mathcal{M}_2^S(H)$} & \multicolumn{2}{c}{$\mathcal{M}_3^S(H)$} & \multicolumn{2}{c}{$\mathcal{M}_1^T(H)$} & \multicolumn{2}{c}{$\mathcal{M}_2^T(H)$} & \multicolumn{2}{c}{$\mathcal{M}_3^T(H)$} \\
Inst & Gap               & Time               & Gap               & Time               & Gap               & Time               & Gap               & Time               & Gap               & Time               & Gap               & Time              
\\
\hline
2009H2             & 3.64                 & 1800.17         & 2.69                 & 1800.18         & \textbf{1.61}        & 1800.09         & 2.08                 & 1800.37         & 2.08                 & 1800.27         & 2.65                 & 1800.14         \\
2009Q3             & 0.80                 & 23.67           & 0.54                 & 16.45           & \textbf{0.20}        & 10.64           & 0.21                 & 12.56           & 0.22                 & 18.12           & 0.53                 & 8.56            \\
2009Q4             & 8.82                 & 1800.05         & 8.41                 & 1800.05         & \textbf{3.45}        & 1558.98         & 4.93                 & 1800.06         & 4.74                 & 1800.06         & 4.69                 & 1800.02         \\
2010Y            & 9.13                 & 1800.27         & 8.27                 & 1801.63         & \textbf{8.09}        & 1800.63         & 8.06                 & 1800.33         & 8.24                 & 1800.25         & 8.44                 & 1800.14         \\
2010H1             & 13.31                & 1800.07         & 11.71                & 1800.15         & \textbf{6.92}        & 1800.23         & 9.05                 & 1800.09         & 8.55                 & 1800.23         & 8.41                 & 1800.09         \\
2010H2             & 1.22                 & 1800.11         & \textbf{0.84}        & 1800.11         & 1.03                 & 1800.33         & 1.00                 & 1800.19         & 0.93                 & 1800.11         & 2.19                 & 1800.05         \\
2010Q1             & 2.89                 & 232.87          & 2.75                 & 379.08          & \textbf{1.64}        & 134.11          & 2.25                 & 665.41          & 2.25                 & 1279.26         & 2.32                 & 199.84          \\
2010Q2             & 12.37                & 1028.67         & 11.64                & 1594.03         & \textbf{2.95}        & 232.52          & 4.89                 & 1800.03         & 4.85                 & 1800.05         & 4.23                 & 422.70          \\
2010Q3            & 0.94                 & 39.75           & 0.70                 & 40.09           & \textbf{0.38}        & 14.31           & 0.70                 & 33.23           & 0.70                 & 37.08           & 0.80                 & 48.44           \\
2010Q4            & 6.38                 & 132.55          & 5.28                 & 160.81          & \textbf{1.97}        & 772.50          & 3.84                 & 231.61          & 3.84                 & 239.50          & 4.95                 & 701.95          \\
2011Y            & 1.56                 & 1800.09         & 1.28                 & 1800.19         & 1.20                 & 1800.11         & 1.17                 & 1800.16         & \textbf{1.13}        & 1800.20         & 1.28                 & 1800.84         \\
2011H1            & 0.64                 & 1800.09         & 0.54                 & 1800.08         & \textbf{0.18}        & 871.16          & 0.41                 & 1800.05         & 0.58                 & 1800.14         & 0.31                 & 287.62          \\
2011H2            & 1.37                 & 1800.06         & 1.26                 & 1800.09         & \textbf{0.37}        & 1800.09         & 1.07                 & 1800.09         & 1.05                 & 1800.12         & 0.66                 & 1800.05         \\
2011Q1            & 0.31                 & 18.62           & 0.26                 & 36.94           & \textbf{0.12}        & 57.71           & 0.22                 & 20.05           & 0.22                 & 24.11           & 0.22                 & 29.84           \\
2011Q2            & 1.08                 & 90.28           & 1.01                 & 38.64           & \textbf{0.49}        & 36.70           & 0.89                 & 62.84           & 0.89                 & 111.09          & 0.69                 & 58.31           \\
2011Q3            & 0.05                 & 0.66            & 0.03                 & 0.77            & \textbf{0.01}        & 1.13            & 0.04                 & 1.33            & 0.04                 & 1.81            & 0.06                 & 0.83            \\
2011Q4            & 0.18                 & 26.39           & 0.15                 & 31.48           & \textbf{0.10}        & 8.00            & 0.24                 & 45.20           & 0.24                 & 39.56           & 0.17                 & 49.69           \\
2012Y            & 1.85                 & 1800.06         & 1.43                 & 1800.09         & \textbf{1.20}        & 1802.70         & 1.37                 & 1800.08         & 1.37                 & 1800.06         & 1.95                 & 1800.13         \\
2012H1            & 2.22                 & 1800.03         & 1.72                 & 1800.08         & 1.34                 & 1800.04         & \textbf{1.14}        & 1800.06         & 1.39                 & 1800.08         & 2.48                 & 1800.11         \\
2012H2            & 0.19                 & 39.77           & 0.15                 & 52.55           & \textbf{0.07}        & 52.66           & 0.18                 & 55.59           & 0.18                 & 60.58           & 0.19                 & 72.69           \\
2012Q1            & 3.35                 & 22.00           & 2.15                 & 10.64           & \textbf{0.91}        & 11.08           & 1.32                 & 9.00            & 1.32                 & 23.47           & 1.78                 & 23.52           \\
2012Q2            & 0.16                 & 12.02           & 0.12                 & 10.39           & 0.12                 & 3.97            & 0.16                 & 18.55           & 0.16                 & 19.77           & \textbf{0.11}        & 22.23           \\
2012Q3            & 0.29                 & 19.80           & 0.23                 & 28.28           & \textbf{0.07}        & 28.56           & 0.18                 & 20.72           & 0.18                 & 48.72           & 0.21                 & 33.38           \\
2012Q4            & 1.77                 & 4.73            & 1.65                 & 4.16            & 0.70                 & 1.03            & 0.44                 & 2.31            & 0.44                 & 2.22            & \textbf{0.33}        & 8.56            \\
\hline
Ave. & 3.10	& 820.53	& 2.70	& 850.29	& \textbf{1.46}	& 758.30	& 1.91	& 874.16	& 1.90	& 904.45	& 2.07	& 757.07
\end{tabular}}
\end{table}

\begin{table}[!h]
\centering
\caption{Duality gaps via discretization for Literature instances.}
\label{table-MILPgap-literature-inst}
\resizebox{\textwidth}{!}{
\begin{tabular}{c|cc|cc|cc|cc|cc|cc|}
\multicolumn{1}{c}{}  & \multicolumn{2}{c}{$\mathcal{M}_1^S(H)$} & \multicolumn{2}{c}{$\mathcal{M}_2^S(H)$} & \multicolumn{2}{c}{$\mathcal{M}_3^S(H)$} & \multicolumn{2}{c}{$\mathcal{M}_1^T(H)$} & \multicolumn{2}{c}{$\mathcal{M}_2^T(H)$} & \multicolumn{2}{c}{$\mathcal{M}_3^T(H)$} \\
Inst & Gap               & Time               & Gap               & Time               & Gap               & Time               & Gap               & Time               & Gap               & Time               & Gap               & Time              
\\
\hline
L1            & 0.33                 & 0.03                 & 0.33                 & 0.05                 & \textbf{0.26}        & 0.05                 & 0.36                 & 0.05                 & 0.36                 & 0.05                 & 0.59                 & 0.03                 \\
L2            & 2.96                 & 1.12                 & 2.96                 & 1.70                 & 15.66                & 1.38                 & 2.70                 & 0.20                 & 2.70                 & 0.09                 & \textbf{1.55}        & 0.28                 \\
L3            & 2.96                 & 0.50                 & 2.96                 & 0.86                 & 3.58                 & 0.37                 & 1.97                 & 0.08                 & 1.97                 & 0.09                 & \textbf{1.55}        & 0.17                 \\
L4            & 1.96                 & 16.64                & 1.96                 & 19.78                & 2.45                 & 1.36                 & 0.47                 & 0.49                 & 0.47                 & 0.28                 & \textbf{0.26}        & 2.14                 \\
L5            & 2.88                 & 1.17                 & 2.88                 & 1.58                 & 4.19                 & 12.78                & 0.96                 & 0.81                 & 0.96                 & 0.68                 & \textbf{0.05}        & 0.14                 \\
L6            & \textbf{0.00}        & 0.14                 & \textbf{0.00}        & 0.17                 & \textbf{0.00}        & 0.11                 & \textbf{0.00}        & 0.04                 & \textbf{0.00}        & 0.02                 & \textbf{0.00}        & 0.06                 \\
L12           & \textbf{0.00}        & 0.09                 & \textbf{0.00}        & 0.11                 & \textbf{0.00}        & 0.12                 & \textbf{0.00}        & 0.02                 & \textbf{0.00}        & 0.02                 & \textbf{0.00}        & 0.03                 \\
L13           & \textbf{0.00}        & 0.08                 & \textbf{0.00}        & 0.13                 & \textbf{0.00}        & 0.20                 & \textbf{0.00}        & 0.02                 & \textbf{0.00}        & 0.03                 & \textbf{0.00}        & 0.03                 \\
L14           & \textbf{0.00}        & 0.10                 & \textbf{0.00}        & 0.11                 & 4.76                 & 0.17                 & 1.91                 & 0.02                 & 1.91                 & 0.04                 & \textbf{0.00}        & 0.03                 \\
L15           & 0.77                 & 0.14                 & \textbf{0.68}        & 0.12                 & 1.20                 & 0.08                 & 1.20                 & 0.03                 & 1.20                 & 0.06                 & \textbf{0.68}        & 0.09                 \\
C2            & 0.14                 & 0.64                 & 0.14                 & 0.72                 & \textbf{0.10}        & 0.59                 & \textbf{0.10}        & 0.55                 & \textbf{0.10}        & 0.69                 & 0.14                 & 0.47                 \\
D1            & 1.06                 & 1800.03              & 1.06                 & 1800.00              & 1.04                 & 1800.02              & \textbf{1.02}        & 1800.02              & 1.03                 & 1800.03              & 1.04                 & 1800.02              \\
\hline
Ave. & 1.09 & 151.72 & 1.08 & 152.11 & 2.77 & 151.44 & 0.89 & 150.19 & 0.89 & 150.17 & \textbf{0.49} & 150.29
\end{tabular}}
\end{table}

\begin{table}[!h]
\centering
\caption{Duality gaps via discretization for Random instances.}
\label{table-MILPgap-random-inst}
\resizebox{\textwidth}{!}{
\begin{tabular}{c|cc|cc|cc|cc|cc|cc|}
\multicolumn{1}{c}{}  & \multicolumn{2}{c}{$\mathcal{M}_1^S(H)$} & \multicolumn{2}{c}{$\mathcal{M}_2^S(H)$} & \multicolumn{2}{c}{$\mathcal{M}_3^S(H)$} & \multicolumn{2}{c}{$\mathcal{M}_1^T(H)$} & \multicolumn{2}{c}{$\mathcal{M}_2^T(H)$} & \multicolumn{2}{c}{$\mathcal{M}_3^T(H)$} \\
Inst & Gap               & Time               & Gap               & Time               & Gap               & Time               & Gap               & Time               & Gap               & Time               & Gap               & Time              
\\
\hline
F1            & \textbf{2.04}        & 7.06                 & \textbf{2.04}        & 9.52                & 2.30                 & 4.14                 & 2.30                 & 6.31                 & 2.30                 & 4.13                 & 2.26                 & 10.23                \\
F2            & 3.01                 & 1800.03              & 2.99                 & 1800.02             & \textbf{2.39}        & 1800.08              & 2.63                 & 1800.08              & 2.59                 & 1800.08              & 2.63                 & 1800.03              \\
F3            & 3.42                 & 1800.05              & 4.36                 & 1800.05             & \textbf{2.01}        & 526.69               & 7.50                 & 1800.05              & 8.88                 & 1800.03              & 5.42                 & 1800.09              \\
F4            & \textbf{0.00}        & 2.44                 & \textbf{0.00}        & 3.19                & \textbf{0.00}        & 3.91                 & \textbf{0.00}        & 3.20                 & \textbf{0.00}        & 3.78                 & \textbf{0.00}        & 4.02                 \\
F5            & 7.26                 & 1800.06              & 7.90                 & 1800.03             & 6.52                 & 1800.03              & 6.78                 & 1800.06              & 7.55                 & 1800.05              & \textbf{6.27}        & 1800.06              \\
F6            & 8.25                 & 1800.02              & 8.38                 & 1800.03             & \textbf{8.12}        & 1800.05              & 9.11                 & 1800.05              & 8.29                 & 1800.05              & 8.19                 & 1800.06              \\
F7            & 24.44                & 1800.05              & 25.67                & 1800.06             & 25.96                & 1800.05              & 24.79                & 1800.06              & 26.83                & 1800.04              & \textbf{22.87}       & 1800.08              \\
F8            & 10.40                & 1800.06              & 10.81                & 1800.05             & 9.53                 & 1800.05              & \textbf{7.66}        & 1800.08              & 8.91                 & 1800.06              & 10.55                & 1800.03              \\
F9            & 8.09                 & 1800.06              & 8.29                 & 1800.06             & 6.78                 & 1800.05              & 8.25                 & 1800.06              & 7.88                 & 1800.06              & \textbf{6.37}        & 1800.12              \\
F10           & 3.08                 & 1800.14              & 3.65                 & 1800.14             & 4.23                 & 726.17               & 4.69                 & 764.77               & 4.69                 & 653.10               & \textbf{2.47}        & 1110.62              \\
F11           & 1.25                 & 23.02                & 1.25                 & 26.14               & 1.30                 & 63.05                & 1.47                 & 57.05                & 1.47                 & 71.55                & \textbf{0.84}        & 9.50                 \\
F12           & 1.75                 & 1800.05              & 1.62                 & 1800.07             & \textbf{1.11}        & 431.17               & 1.57                 & 1800.14              & 1.61                 & 1800.12              & 1.49                 & 1800.16              \\
F13           & 4.54                 & 1800.05              & 4.22                 & 1800.05             & \textbf{1.67}        & 1751.19              & 3.81                 & 1800.10              & 3.79                 & 1800.06              & 4.40                 & 1800.05              \\
F14           & \textbf{0.00}        & 1.09                 & \textbf{0.00}        & 0.69                & \textbf{0.00}        & 0.88                 & \textbf{0.00}        & 1.33                 & \textbf{0.00}        & 2.36                 & \textbf{0.00}        & 3.08                 \\
F15           & 13.15                & 1800.03              & 12.73                & 1800.03             & \textbf{12.22}       & 1800.03              & 12.97                & 1800.03              & 13.52                & 1800.05              & 12.43                & 1800.05              \\
F16           & 1.37                 & 240.83               & 1.35                 & 347.73              & 1.16                 & 161.31               & 1.15                 & 573.34               & \textbf{1.09}        & 1460.55              & 1.48                 & 39.92                \\
F17           & 1.94                 & 10.59                & 1.94                 & 19.34               & \textbf{1.82}        & 6.59                 & 1.83                 & 6.95                 & 1.83                 & 10.22                & 2.50                 & 10.50                \\
F18           & 1.79                 & 61.87                & 1.79                 & 42.75               & \textbf{1.16}        & 36.03                & 1.17                 & 21.73                & 1.17                 & 27.42                & 2.64                 & 16.83                \\
F19           & \textbf{0.00}        & 0.75                 & \textbf{0.00}        & 0.56                & \textbf{0.00}        & 0.38                 & \textbf{0.00}        & 0.39                 & \textbf{0.00}        & 0.39                 & \textbf{0.00}        & 0.44                 \\
F20           & 3.47                 & 1800.11              & 3.50                 & 1800.05             & 2.49                 & 535.50               & 2.29                 & 631.09               & 2.29                 & 1337.64              & \textbf{2.23}        & 308.12               \\
F21           & 26.01                & 1800.05              & \textbf{25.46}       & 1800.08             & 25.55                & 1800.06              & 26.20                & 1800.05              & 25.69                & 1800.09              & 25.46                & 1800.11              \\
F22           & 10.40                & 1800.03              & 11.68                & 1800.03             & 11.11                & 1800.05              & 12.60                & 1800.04              & 12.95                & 1800.08              & 7.26                 & 1800.13              \\
F23           & 43.65                & 1800.06              & \textbf{43.58}       & 1800.11             & \textbf{43.58}       & 1800.09              & 46.52                & 1800.05              & 45.03                & 1800.09              & 44.69                & 1800.09              \\
F24           & 5.96                 & 46.45                & 5.96                 & 39.28               & \textbf{5.88}        & 12.58                & 5.99                 & 12.37                & 5.99                 & 28.78                & 5.92                 & 20.03                \\
\hline
Ave. & 7.72 & 1141.46 & 7.88 & 1145.42 & \textbf{7.37} & 927.50 & 7.97 & 1061.64 & 8.10 & 1125.03 & 7.43 & 1038.93
\end{tabular}}
\end{table}

In Table~\ref{table:LPvsMILP}, we compare the performances of the average best LP and MILP methods on each instance set. 
As expected, the average run time of each MILP method was much higher than its LP counterpart. On the other hand, MILP methods can close significantly more gap than the LP ones. The performance discrepancy is more evident in the Literature instances due to their relatively small sizes. Another interesting observation is that the winning Light LP method seems to suggest which MILP method will perform better. For instance, the source-based MILP relaxations perform better for the Mining and Random instances, as correctly predicted by the better performance of the source-based light LP relaxations. This situation is reversed for the Literature instances as the terminal-based light LP and MILP relaxations seem to provide stronger relaxations consistently.

\begin{table}[h]
\centering
\caption{Best average duality gap for each set of instances. Here, ``Method'' is the method that yields the best average duality gap.}
\label{table:LPvsMILP}
\begin{tabular}{l|ccc|ccc|ccc|}
\multicolumn{1}{c}{}         & \multicolumn{3}{c}{Light LP}  & \multicolumn{3}{c}{LP} & \multicolumn{3}{c}{MILP} \\
Instance Set & Gap      & Time  & Method  & Gap      & Time  & Method     & Gap      & Time   & Method     \\
\hline
Mining     & 4.14  & 1.76 & $\mathcal{F}_2^S$  & 3.82       & 13.73   & $\mathcal{F}_2^S\cap\mathcal{F}_2^T$   & 1.46       & 758.30    & $\mathcal{M}_3^S(H)$   \\
Literature   & 18.11  & 0.02 & $\mathcal{F}_2^T$ & 18.10       & 0.11   & $\mathcal{F}_2^S\cap\mathcal{F}_2^T$   & 0.49       & 150.29   & $\mathcal{M}_3^T(H)$    \\
Random     & 12.47  & 5.25 & $\mathcal{F}_2^S$  & 12.28      & 27.34  & $\mathcal{F}_2^S\cap\mathcal{F}_2^T$   & 7.37        & 927.50    & $\mathcal{M}_3^S(H)$   
\end{tabular}
\end{table}

\section{Conclusion}

We propose new convex relaxations for QCQPs derived from its rank-based formulation \eqref{eq:QCQP2a}--\eqref{eq:QCQP2d}. Specifically, we study the convex hull of sets defined by a rank-1 constraint intersected with some linear side constraints \eqref{eq:cU}. For several choices of linear side constraints, we show that this convex hull is polyhedral or SOCr, and provide compact formulations for the polyhedral cases.  We also show that in all these cases, a linear objective can be optimized over these sets in polynomial time. 

On the application side, we propose rank-1 based formulations for the pooling problem. The new formulations combined with our convexification results allow us to derive new convex relaxations for the pooling problem, which we show to generalize, for example, the well-known $pq$-relaxation. {Studying the pooling problem via rank-based formulations not only allows us to recover previous relaxations from the literature, but also leads us to improve/strengthen them in a systematic way.} 
 In addition, inspired by our newly proposed formulation and our convexification results, we propose several MILP restriction and  relaxation discretizations to the pooling problem. 

Finally, we report extensive computational experiments on three sets of generalized pooling problem instances, two from the literature and one introduced in this paper. The new set of pooling problem instances being introduced here was randomly generated and are relatively harder to solve than all the previously available instances in the literature, therefore, it may serve as a new benchmark for new methodologies. Our computational results show that our technique consistently outperforms, on average, the previous methods from the literature in deriving dual bounds for pooling problem instances.



\appendix

\section{Omitted proofs}
\label{app:proofs}

\subsection{Proof of Part (iii) of Proposition~\ref{prop:row} }\label{sec:proofprop3}


\begin{proposition}
$\textup{conv} \left({\cU}^{\textup{row}}_{(n_1,n_2)}(l,u)\right)  $ is described by the inequalities (\ref{eq:rowconv1}), (\ref{eq:rowconv2}), (\ref{eq:rowconv3}), and (\ref{eq:rowconv4}).
\end{proposition}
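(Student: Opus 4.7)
The plan is to prove the statement by projecting the extended formulation (\ref{eq:ex1})--(\ref{eq:ex3}) from Part~(ii) onto the $W$-variables. Since Part~(ii) already gives $\conv({\cU}^{\textup{row}}_{(n_1,n_2)}(l,u)) = \proj_W$ of that extended formulation, it suffices to show that the projection is exactly the inequalities (\ref{eq:rowconv1})--(\ref{eq:rowconv4}). Rather than cranking Fourier--Motzkin mechanically (the coupling constraint $\sum_j t_j = 1$ would make bookkeeping tedious), I would exploit the structure that each $t_j$ is constrained only by the $j$-th column of $W$, so fixing $W$ reduces feasibility of $t$ to a one-dimensional interval LP with a scalar sum constraint.

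Step 1 (reduction). Fix $W \in \mathbb{R}_+^{n_1 \times n_2}$. The remaining constraints read $\max\{0,\ \max_{i \in [n_1]} W_{ij}/u_i\} \le t_j \le \min_{i \in \mathcal{I}} W_{ij}/l_i$, i.e.\ $L_j(W) \le t_j \le U_j(W)$, where $L_j(W) := \max_{i \in [n_1]} W_{ij}/u_i$ and $U_j(W) := \min_{i \in \mathcal{I}} W_{ij}/l_i$, with the convention $U_j := +\infty$ if $\mathcal{I} = \emptyset$. Step 2 (interval-LP feasibility). The system $\{L_j \le t_j \le U_j,\ \sum_j t_j = 1\}$ is feasible iff (a) $L_j \le U_j$ for all $j$, (b) $\sum_j L_j \le 1$, and (c) $\sum_j U_j \ge 1$ (vacuous when $\mathcal{I} = \emptyset$). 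This is a well-known scalar interval-LP feasibility criterion, proved by a one-line greedy argument: raise the lower end-points from $L_j$ toward $U_j$ to hit the target $1$ if $\sum L_j \le 1 \le \sum U_j$.

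Step 3 (translation). Condition (a), $L_j \le U_j$ for all $j$, is $W_{i_2 j}/u_{i_2} \le W_{i_1 j}/l_{i_1}$ for all $j\in[n_2]$, $i_1\in\mathcal{I}$, $i_2\in[n_1]$; clearing denominators and discarding the trivial case $i_1=i_2$ (which becomes $l_{i_1}\le u_{i_1}$) yields (\ref{eq:rowconv3}). For (b), rewrite $\sum_j \max_i W_{ij}/u_i$ as the supremum, over selectors $\sigma:[n_2]\to[n_1]$, of $\sum_j W_{\sigma(j)j}/u_{\sigma(j)}$; identifying $\sigma$ with the partition $T_i := \sigma^{-1}(i)$ gives (\ref{eq:rowconv1}). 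An identical argument for the minimum in (c) produces (\ref{eq:rowconv2}); when $\mathcal{I}=\emptyset$, (c) and (\ref{eq:rowconv2}) both become vacuous. Polynomial-time separation of (\ref{eq:rowconv1}) and (\ref{eq:rowconv2}) is immediate: for each column $j$, pick the argmax $i$ of $W_{ij}/u_i$ (resp.\ argmin of $W_{ij}/l_i$ over $\mathcal{I}$) and sum, checking the total against $1$; this runs in $O(n_1 n_2)$ time.

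Main obstacle. There is no deep difficulty here; the bulk of the work is the (clean but notationally dense) translation in Step~3 between max/min formulations and partition-indexed families of inequalities, and the boundary case $\mathcal{I}=\emptyset$ must be handled explicitly so that (\ref{eq:rowconv2}) and (\ref{eq:rowconv3}) drop out. The key conceptual move, and the reason the answer is polyhedral at all, is simply that once $W$ is fixed the $t$-feasibility problem decouples column-by-column into interval constraints linked only through the scalar simplex equation.
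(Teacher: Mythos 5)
Your proposal is correct, and it takes a genuinely different route from the paper. The paper proves this by explicit Fourier--Motzkin elimination on the extended formulation of Part~(ii), projecting out $t_{n_2}, t_{n_2-1}, \dots, t_1$ one at a time and carrying an inductive description of the intermediate systems; a key bookkeeping step there is showing that the ``mixed'' inequalities FM generates (pairing a lower-bound partition with an upper-bound partition) are implied by the ratio inequalities~(\ref{eq:rowconv3}). You instead fix $W$ and characterize feasibility of the residual $t$-system directly: since each $t_j$ is constrained only through column $j$, the system collapses to $L_j(W)\le t_j\le U_j(W)$ with $\sum_j t_j=1$, and the interval-sum feasibility criterion ($L_j\le U_j$ for all $j$, $\sum_j L_j\le 1$, $\sum_j U_j\ge 1$) translates exactly into~(\ref{eq:rowconv3}), (\ref{eq:rowconv1}) and~(\ref{eq:rowconv2}) via the selector-to-partition dictionary. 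This is cleaner and more transparent --- it explains \emph{why} each family of inequalities appears (nonempty intervals, lower sums, upper sums) rather than merely verifying the outcome of elimination --- and your handling of the $\mathcal{I}=\emptyset$ case and of the nonnegativity~(\ref{eq:rowconv4}) is right. The one thing the paper's heavier FM induction buys is that it transfers almost verbatim to the proof of Proposition~\ref{prop:rowplus}(ii), where the extra aggregate constraint $Lt_j\le\sum_i W_{ij}\le Ut_j$ couples into the same column and the induction simply carries one more term; your argument also extends there (add a row index $0$ to the selectors), but you would need to say so. Your separation routine matches the paper's.
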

\begin{proof}
We will use Fourier-Motzkin elimination to obtain the convex hull in the original space.  Now, we will project the $t$ variables in the order $t_{n_2}$, $t_{n_2-1}$, $t_{n_2 -2}$, \dots, $t_1$.  

We claim that after projecting out variables $t_{n_2}, \dots, t_{n_2 - j}$, the resulting system of the inequalities is:
\begin{align*}
1 - \sum_{p = 1}^{n_2 -  (j + 1)} t_p \leq& \ \sum_{i \in \mathcal{I}} \sum_{k \in T_{i} }  \frac{W_{ik}}{l_i} \ &\forall &  (T_1, T_2, \dots, T_ {|\mathcal{I}|}) \in \cP_{|\cI|}(\{n_2 - j, \dots, n_2\}) \\
1 - \sum_{p = 1}^{n_2 -  (j + 1)} t_p \geq& \ \sum_{i \in [n_1]} \sum_{k \in T_i}  \frac{W_{ik}}{u_i}\ &\forall&  (T_1, T_2, \dots, T_ {n_1}) \in \cP_{n_1}(\{n_2 - j, \dots, n_2\}) \\
u_{i_2}W_{i_1k} \geq \ & l_{i_1} W_{i_2k} &\forall& i_2 \in [n_1], \forall i_1 \in \mathcal{I}, \forall k \in  \{n_2 - j, \dots, n_2\} \\
 l_it_k  \le  W_{ik} \leq \ & u_it_k &\forall& i \in [n_1], \forall k \in [n_2 - (j + 1)] \nonumber\\
t_k \geq \ & 0 \ &\forall&  k \in [n_2 - (j + 1)] \nonumber \\
W_{ij} \geq \ & 0 \ &\forall&  i \in [n_1], \forall j \in [n_2 ].
\end{align*}

\textbf{Base case:} After projecting out $t_{n_2}$, we obtain the system:
\begin{align}
\sum_{j = 1}^{n_2 - 1} t_j \leq& \ 1 \label{eq:rowbase1}\\
W_{i n_2} \leq& \ (1 - \sum_{j = 1}^{n_2 - 1} t_j)u_i \ &\forall& i \in [n_1] \label{eq:rowbase2}\\
W_{i n_2} \geq& \ (1 - \sum_{j = 1}^{n_2 - 1} t_j)l_i \ &\forall& i \in \mathcal{I} \nonumber\\
u_{i_2}W_{i_1n_2} \geq& \ l_{i_1} W_{i_2n_2} &\forall& i_2 \in [n_1],\forall  i_1 \in \mathcal{I} \nonumber \\
 l_it_j  \le W_{ij} \leq& \ u_it_j &\forall& i \in [n_1], \forall j \in [n_2 - 1] \nonumber\\
t_j  \geq& \ 0 \ &\forall&  j \in [n_2 - 1], \nonumber \\
W_{ij} \geq& \  0 \ &\forall&  i \in [n_1], \forall  j \in [n_2 ],  \label{eq:rowbase6}
\end{align}
Note that (\ref{eq:rowbase2}) and (\ref{eq:rowbase6}) imply (\ref{eq:rowbase1}). Therefore the above can be written as:
\begin{align}
(1 - \sum_{j = 1}^{n_2 - 1} t_j) \leq & \  \sum_{i \in \mathcal{I}} \sum_{k \in T_{i} }  \frac{W_{in_2}}{l_i}    \ &\forall &  (T_1, T_2, \dots, T_ {|\mathcal{I}|}) \in \cP_{|\cI|}(\{n_2 \}) \nonumber\\
(1 - \sum_{j = 1}^{n_2 - 1} t_j) \geq & \ \sum_{i \in [n_1]} \sum_{k \in T_i}  \frac{W_{ik}}{u_i}    \ &\forall&  (T_1, T_2, \dots, T_ {n_1}) \in \cP_{n_1}(\{n_2 \})  \nonumber\\
u_{i_2}W_{i_1n_2} \geq& \ l_{i_1} W_{i_2n_2} &\forall& i_2 \in [n_1], \forall  i_1 \in \mathcal{I} \nonumber \\
 l_it_j  \le W_{ij} \leq& \ u_it_j &\forall& i \in [n_1], \forall  j \in [n_2 - 1] \nonumber\\
t_j  \geq& \ 0 \ &\forall&  j \in [n_2 - 1], \nonumber \\
W_{ij} \geq& \ 0 \ &\forall&  i \in [n_1],  \forall  j \in [n_2],  \nonumber
\end{align}
proving the base case.
\newline\textbf{Induction step:} After projecting $t_{n_2}$, $\dots$, $t_{n_2 -j}$, by the induction hypothesis we  have the following system:
\begin{align}
1 - \sum_{p = 1}^{n_2 -  (j + 2)} t_p - \sum_{i \in \mathcal{I}} \sum_{k \in T_{i} } \frac{W_{ik}}{l_i} \leq& \  t_{n_2 - (j +1)}  \ &\forall&  (T_1, T_2, \dots, T_ {|\mathcal{I}|}) \in \cP_{|\cI|}(\{n_2 - j, \dots, n_2\})  \nonumber\\
\frac{W_{i,n_2 - (j +1)}}{u_i} \leq & \ t_{n_2 - (j +1)} &\forall& i \in [n_1] \nonumber \\
1 - \sum_{p = 1}^{n_2 -  (j + 2)} t_p  - \sum_{i \in [n_1]} \sum_{k \in T_i}  \frac{W_{ik}}{u_i} \geq& \  t_{n_2 - (j +1)} \ &\forall&  (T_1, T_2, \dots, T_ {n_1}) \in \cP_{n_1}(\{n_2 - j, \dots, n_2\}) \nonumber \\
\frac{W_{i,n_2 - (j +1)}}{l_i} \geq& \ t_{n_2 - (j +1)} &\forall& i \in \mathcal{I} \nonumber \\
u_{i_2}W_{i_1k} \geq& \ l_{i_1} W_{i_2k} &\forall& i_2 \in [n_1], \forall  i_1 \in \mathcal{I},\forall  k \in  \{n_2 - j, \dots, n_2\} \nonumber \\
l_it_k  \le W_{ik} \leq& \ u_it_k &\forall& i \in [n_1], \forall k \in [n_2 - (j + 1)] \nonumber\\
t_k \geq& \ 0 \ &\forall&  k \in [n_2 - (j + 1)] \nonumber \\
W_{ik} \geq& \ 0 \ &\forall&  i \in [n_1], \forall k \in [n_2 ]. \nonumber 
\end{align}
Note that a constraint of the form:
$$- \sum_{i \in [n_1]} \sum_{k \in T_i}  \frac{W_{ik}}{u_i}\geq- \sum_{i \in \mathcal{I}} \sum_{k \in T'_{i} }\frac{W_{ik}}{l_i},$$
where $(T_1, T_2 \dots T_ {n_1}) \in \cP_{n_1}(\{n_2 - j, \dots, n_2\})$ and $(T'_1,  T'_2, \dots T'_ {|\mathcal{I}|})\in \cP_ {|\mathcal{I}|} (\{n_2 - j, \dots, n_2\})$ is implied by constraints of the form $u_{i_2}W_{i_1k} \geq l_{i_1} W_{i_2k} \forall i_2 \in [n_1], i_1 \in \mathcal{I}, k \in  \{n_2 - j, \dots, n_2\}.$ Thus, after projecting $t_{n_2 - (j +1)}$, we obtain:
\begin{align}
1 - \sum_{p = 1}^{n_2 -  (j + 2)} t_p \leq& \ \sum_{i \in \mathcal{I}} \sum_{k \in T_{i} }  \frac{W_{ik}}{l_i} \  &\forall&  (T_1, T_2, \dots, T_ {|\mathcal{I}|}) \in \cP_ {|\mathcal{I}|}( \{n_2 - (j +1), \dots, n_2\} )\nonumber \\
1 - \sum_{p = 1}^{n_2 -  (j + 2)} t_p  \geq& \ \sum_{i \in [n_1]} \sum_{k \in T_i}  \frac{W_{ik}}{u_i}\  &\forall& (T_1, T_2, \dots, T_ {n_1}) \in \cP_{n_1} (\{n_2 - (j +1), \dots, n_2\}) \nonumber \\ 
u_{i_2}W_{i_1k} \geq& \ l_{i_1} W_{i_2k}  &\forall& i_2 \in [n_1], \forall i_1 \in \mathcal{I}, \forall k \in  \{n_2 - (j + 1), \dots, n_2\} \nonumber \\
 l_it_k  \le W_{ik} \leq& \ u_it_k  &\forall& i \in [n_1], \forall k \in [n_2 - (j + 2)] \nonumber\\
t_k \geq& \ 0 \  &\forall&  k \in [n_2 - (j + 2)] \nonumber \\
W_{ik} \geq& \ 0 \  &\forall&  i \in [n_1], \forall k \in [n_2 ]. \nonumber
\end{align}
It is straightforward now to see that after all $t$ variables are projected, we obtain the result.
\end{proof}

\begin{proposition}\label{prop:sep1}
The inequalities in (\ref{eq:rowconv1}) can be separated in polynomial-time. 
\end{proposition}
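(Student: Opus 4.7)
The plan is to reduce the separation of the family \eqref{eq:rowconv1} to a trivial column-by-column maximization. Fix a candidate point $\hat W \in \mathbb{R}_+^{n_1 \times n_2}$. A partition $(T_1,\ldots,T_{n_1}) \in \cP_{n_1}([n_2])$ yields a violated inequality if and only if
\[
\sum_{i=1}^{n_1}\sum_{j \in T_i} \frac{\hat W_{ij}}{u_i} > 1.
\]
Hence the separation problem is equivalent to maximizing the left-hand side over all partitions of $[n_2]$ into $n_1$ (possibly empty) ordered parts, and comparing the optimal value to $1$.

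The key observation is that this maximization decomposes column-wise: a partition is equivalent to an assignment $\sigma:[n_2] \to [n_1]$, where $\sigma(j)$ is the unique index $i$ with $j \in T_i$, and the objective becomes $\sum_{j=1}^{n_2} \hat W_{\sigma(j),j}/u_{\sigma(j)}$. Since the contributions of different columns are independent, the optimal assignment picks, for each $j \in [n_2]$, an index
\[
i^\star(j) \in \argmax_{i \in [n_1]} \frac{\hat W_{ij}}{u_i},
\]
and the maximum value is $\sum_{j=1}^{n_2} \max_{i \in [n_1]} \hat W_{ij}/u_i$.

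The separation algorithm is therefore: compute this sum; if it is at most $1$, all inequalities in \eqref{eq:rowconv1} are satisfied; otherwise, return the partition $T_i := \{ j \in [n_2] : i^\star(j) = i\}$ for $i \in [n_1]$ as a violated inequality. The total work is $O(n_1 n_2)$ arithmetic operations, which is polynomial in the input size. There is no real obstacle here; the only subtlety is noticing that allowing empty parts $T_i$ (which is permitted by the definition of $\cP_{n_1}$) is exactly what makes the separation decompose independently across columns. A completely analogous argument handles \eqref{eq:rowconv2} by replacing maxima with minima over $i \in \mathcal{I}$ and the sum-to-1 test with a sum-to-1 lower bound test.
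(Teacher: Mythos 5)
Your proposal is correct and follows essentially the same approach as the paper: both reduce separation of \eqref{eq:rowconv1} to choosing, for each column $j$, the row index maximizing $\hat W_{ij}/u_i$, assembling the resulting partition, and comparing the total to $1$, in $O(n_1 n_2)$ time. Your remark that the column-wise decomposition hinges on allowing empty parts $T_i$ is a nice explicit articulation of a point the paper leaves implicit.
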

\begin{proof}
For a given matrix $\hat W$, let us define an index $j_{\text{row}}$ for each index $j \in [n_2]$ as
\begin{equation}\label{eq:jRow def}
j_{\text{row}}:= \min \left( \argmax_{i =1,\dots,n_1} \left \{ \frac{\hat W_{ij}}{u_i}    \right\} \right).
\end{equation}
Here, we are breaking ties arbitrarily using the smallest index, when necessary. Then, we define a partition $T_1^*,\dots,T_{n_1}^*$ of the set $ [n_2]$ as 
\[
T_i^* := \{j \,|\, j_{\text{row}} = i\} .
\]
Let 
\[
\theta := \sum_{i=1}^{n_1}\sum_{j \in T_i^*} \frac{\hat W_{ij}}{u_i} .
\]
If $\theta >1$, then a violated inequality is discovered. Otherwise, we conclude that $\hat W$ satisfies all the inequalities in (\ref{eq:rowconv1}) (by construction, the partition $T_1^*, \dots, T_{n_1}^*$ corresponds to the inequality with the largest deviation, if one exists). 
Finally, we note that the complexity of this separation routine is  $\mathcal{O}(n_1n_2)$ since we need to find the maximum of $n_1$ numbers $n_2$ times to construct this partition.
\end{proof}

\begin{proposition}
The inequalities in (\ref{eq:rowconv2}) can be separated in polynomial-time. 
\end{proposition}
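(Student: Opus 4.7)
The plan is to mirror the separation argument given in Proposition~\ref{prop:sep1} for the inequalities in (\ref{eq:rowconv1}), but in the opposite direction, since (\ref{eq:rowconv2}) is a $\geq$ family rather than a $\leq$ family. Given a point $\hat W$, finding a most-violated inequality in (\ref{eq:rowconv2}) amounts to solving
\[
\min_{(T_1,\ldots,T_{|\cI|}) \in \cP_{|\cI|}([n_2])} \ \sum_{i \in \cI} \sum_{j \in T_i} \frac{\hat W_{ij}}{l_i},
\]
and declaring a violation precisely when the optimal value is strictly less than $1$.

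The key observation is that this minimization decouples completely across the columns $j \in [n_2]$. Each column index $j$ must be placed into exactly one block $T_i$, and its contribution to the objective is $\hat W_{ij}/l_i$, which depends only on the chosen $i$. Hence the optimal partition is obtained by the pointwise rule
\[
j_{\text{row}}^\star := \min \Bigl( \argmin_{i \in \cI} \Bigl\{\tfrac{\hat W_{ij}}{l_i}\Bigr\} \Bigr) \qquad \forall j \in [n_2],
\]
with ties broken by taking the smallest index, and then setting $T_i^\star := \{j \in [n_2] \,|\, j_{\text{row}}^\star = i\}$ for each $i \in \cI$. The resulting value
\[
\theta := \sum_{i \in \cI} \sum_{j \in T_i^\star} \frac{\hat W_{ij}}{l_i} \;=\; \sum_{j=1}^{n_2} \min_{i \in \cI} \frac{\hat W_{ij}}{l_i}
\]
is the smallest LHS attainable by any inequality in (\ref{eq:rowconv2}). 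If $\theta < 1$, then the partition $(T_1^\star,\ldots,T_{|\cI|}^\star)$ yields a violated inequality; otherwise every inequality in (\ref{eq:rowconv2}) is satisfied by $\hat W$.

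There is no real obstacle here, since the minimization over partitions is separable and therefore trivially polynomial; the only mild subtlety is to confirm that the definition of $\cP_m(S)$ allows empty blocks, so that the greedy column-wise assignment indeed yields a feasible partition. Under the standing assumption $l_i > 0$ for $i \in \cI$ the ratios $\hat W_{ij}/l_i$ are well-defined, and the running time of the routine is $\mathcal{O}(n_2 \cdot |\cI|)$, matching the complexity of the routine in Proposition~\ref{prop:sep1}.
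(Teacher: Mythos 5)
Your proposal is correct and is exactly what the paper intends: its proof of this proposition is the one-line remark that the argument is ``similar to the proof of Proposition~\ref{prop:sep1},'' i.e., the same column-wise greedy assignment with $\argmin_{i\in\cI}\{\hat W_{ij}/l_i\}$ replacing $\argmax_{i}\{\hat W_{ij}/u_i\}$ and the violation test $\theta<1$ replacing $\theta>1$. Your observation that the minimization decouples over columns (and that $\cP_m(S)$ permits empty blocks) is precisely the justification the paper leaves implicit.
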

\begin{proof}
The proof is similar to the proof of Proposition \ref{prop:sep1}.
\end{proof}

\subsection{Proof of Part (ii)  Proposition~\ref{prop:rowplus}}\label{sec:proofprop4}

\begin{proposition}
$\textup{conv} \left({\cU}^{\textup{row+}}_{(n_1,n_2)}(l,u, L, U)\right)  $ is described by the inequalities (\ref{eq:rowplusconv1}), (\ref{eq:rowplusconv2}), (\ref{eq:rowconv3}), and (\ref{eq:rowconv4}).
\end{proposition}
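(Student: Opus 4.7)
My plan is to mimic the Fourier--Motzkin elimination argument used in the proof of Proposition~\ref{prop:row}(iii), starting from the extended formulation given in Proposition~\ref{prop:rowplus}(i). The variables to project out are $t_{n_2},\,t_{n_2-1},\dots,t_1$, one at a time, and I would maintain an inductive hypothesis that after projecting the last $k$ of them the surviving constraints take the form of the claimed inequalities restricted to the already-projected column indices together with the residual $1-\sum_{p\le n_2-k}t_p$ in place of $\sum_{j\in T_*} t_j$ on the right. At the very end ($k=n_2$), the residual becomes the constant $1$ and we recover exactly \eqref{eq:rowplusconv1}--\eqref{eq:rowplusconv2} alongside \eqref{eq:rowconv3}--\eqref{eq:rowconv4}.

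The new feature relative to the proof in Appendix~\ref{sec:proofprop3} is that each variable $t_j$ now has a richer set of lower and upper bounds. Its lower bounds are $t_j\ge 0$, $t_j\ge W_{ij}/u_i$ for any $i\in[n_1]$, and $t_j\ge \tfrac{1}{U}\sum_{i}W_{ij}$, coming respectively from $t_j\ge 0$, from $W_{ij}\le u_i t_j$, and from $\sum_i W_{ij}\le U t_j$. Its upper bounds are $t_j\le W_{ij}/l_i$ for $i\in\mathcal I$ and $t_j\le \tfrac{1}{L}\sum_i W_{ij}$ (when $L>0$), coming from $l_i t_j\le W_{ij}$ and from $L t_j\le \sum_i W_{ij}$. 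When I eliminate $t_j$ from the cumulative inequality $1-\sum_{p<j}t_p-\sum_{p>j}(\text{lower bound of }t_p)\le t_j$, pairing with each of the three kinds of lower bounds above explains the three cases that partition $[n_2]$ into $T_0,T_1,\dots,T_{n_1}$ in \eqref{eq:rowplusconv1}; the case $j\in T_i$ with $i\ge 1$ contributes the term $W_{ij}/u_i$, the case $j\in T_0$ contributes the term $\tfrac{1}{U}\sum_{i}W_{ij}$, and the "null" choice $t_j\ge 0$ corresponds to omitting $j$ from any set (which is allowed because an inequality with a smaller left-hand side is weaker than one with a larger left-hand side in the partition family). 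A completely symmetric analysis against the upper bounds on $t_j$ yields \eqref{eq:rowplusconv2}.

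The main obstacle will be the bookkeeping that shows that all other inequalities produced by Fourier--Motzkin are redundant. Two types of combinations need to be dismissed. The first, pairing an upper bound of $t_j$ of the form $W_{i_1 j}/l_{i_1}$ with a lower bound of the form $W_{i_2 j}/u_{i_2}$ for a fixed $j$, yields $l_{i_1}W_{i_2 j}\le u_{i_2}W_{i_1 j}$, which is exactly \eqref{eq:rowconv3}; this is the only residue one wants to keep. The second type, pairing upper bounds of $t_j$ with lower bounds of $t_{j'}$ across distinct columns, would produce inequalities that I claim are dominated by telescoping sums of copies of \eqref{eq:rowconv3} together with the already-kept families, exactly as in the proof in Appendix~\ref{sec:proofprop3}. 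A third item to verify is that the newly introduced terms coming from the total-flow constraints ($\tfrac{1}{U}\sum_i W_{ij}$ and $\tfrac{1}{L}\sum_i W_{ij}$) interact cleanly with the row-wise ones: in particular, a mixed inequality of the form "use $\tfrac{1}{U}\sum_i W_{i j}$ for some $j$'s and $W_{ij}/u_i$ for others" is itself on the list \eqref{eq:rowplusconv1}, which is precisely why the set $T_0$ is included in the partition notation of \eqref{eq:rowplusconv1}--\eqref{eq:rowplusconv2}. Polynomial-time separation then follows by the same greedy argument as in Proposition~\ref{prop:sep1}: for each column $j$, pick the term among $\{W_{ij}/u_i:i\in[n_1]\}\cup\{\tfrac{1}{U}\sum_i W_{ij}\}$ that is largest, and declare a violation if and only if the sum exceeds $1$; separation of \eqref{eq:rowplusconv2} is analogous.
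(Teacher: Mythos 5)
Your proposal follows essentially the same route as the paper's proof in Appendix~\ref{sec:proofprop4}: Fourier--Motzkin elimination of $t_{n_2},\dots,t_1$ from the extended formulation of Proposition~\ref{prop:rowplus}(i), with an induction hypothesis tracking the two cumulative partition families (now enlarged by the $T_0$ blocks coming from the aggregate bounds $Lt_j\le\sum_i W_{ij}\le Ut_j$), keeping the within-column pairings as \eqref{eq:rowconv3}, and discharging the pairing of the ``$\le$'' family against the ``$\ge$'' family via \eqref{eq:rowconv3} together with $L\le U$. The one bookkeeping item you (like the paper) leave implicit is the within-column pairing of a row bound with an aggregate bound, e.g.\ $W_{ij}/u_i\le t_j\le \tfrac{1}{L}\sum_{i'}W_{i'j}$, which is dominated by chaining both sides through the residual $1-\sum_p t_p$ before it is eliminated.
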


\begin{proof}
We will use Fourier-Motzkin elimination to obtain the convex hull in the original space.  Now, we will project the $t$ variables in the order $t_{n_2}$, $t_{n_2-1}$, $t_{n_2 -2}$, \dots, $t_1$.  

We claim that after projecting out variables $t_{n_2}, \dots, t_{n_2 - j}$, the resulting system of the inequalities is:
\begin{align*}
1 - \sum_{p = 1}^{n_2 -  (j + 1)} t_p \leq& \ \sum_{i \in \mathcal{I}} \sum_{k \in T_{i} }  \frac{W_{ik}}{l_i} +   \frac1L \sum_{i = 1}^{n_1}\sum_{k \in T_0}  W_{ik}  \ &\forall &  (T_0, T_1, \dots, T_ {|\mathcal{I}|}) \in \cP_{|\cI|}(\{n_2 - j, \dots, n_2\}) \\
1 - \sum_{p = 1}^{n_2 -  (j + 1)} t_p \geq& \ \sum_{i \in [n_1]} \sum_{k \in T_i}  \frac{W_{ik}}{u_i} +   \frac1U \sum_{i = 1}^{n_1}\sum_{k \in T_0}  W_{ik}  \ &\forall&  (T_0, T_1, \dots, T_ {n_1}) \in \cP_{n_1}(\{n_2 - j, \dots, n_2\}) \\
u_{i_2}W_{i_1k} \geq \ & l_{i_1} W_{i_2k} &\forall& i_2 \in [n_1], \forall i_1 \in \mathcal{I}, \forall k \in  \{n_2 - j, \dots, n_2\} \\
 l_it_k  \le W_{ik} \leq \ & u_it_k &\forall& i \in [n_1], \forall k \in [n_2 - (j + 1)] \nonumber\\
L t_k \le \sum_{i=1}^{n_1}W_{i k} \leq& \ U t_k\ &\forall&     k \in [n_2- (j + 1)] \nonumber\\
t_k \geq \ & 0 \ &\forall&  k \in [n_2 - (j + 1)] \nonumber \\
W_{ik} \geq \ & 0 \ &\forall&  i \in [n_1], \forall k\in [n_2 ].
\end{align*}

%
%

\textbf{Base case:} After projecting out $t_{n_2}$, we obtain the system:
\begin{align}
\sum_{j = 1}^{n_2 - 1} t_j \leq& \ 1 \label{eq:rowplusbase1}\\
W_{i n_2} \leq& \ (1 - \sum_{j = 1}^{n_2 - 1} t_j)u_i \ &\forall& i \in [n_1] \label{eq:rowplusbase2}\\
W_{i n_2} \geq& \ (1 - \sum_{j = 1}^{n_2 - 1} t_j)l_i \ &\forall& i \in \mathcal{I} \nonumber\\
\sum_{i=1}^{n_1}W_{i n_2} \leq& \ (1 - \sum_{j = 1}^{n_2 - 1} t_j)U \   \nonumber\\
\sum_{i=1}^{n_1}W_{i n_2} \geq& \ (1 - \sum_{j = 1}^{n_2 - 1} t_j)L \   \nonumber\\
u_{i_2}W_{i_1n_2} \geq& \ l_{i_1} W_{i_2n_2} &\forall& i_2 \in [n_1],\forall  i_1 \in \mathcal{I} \nonumber \\
l_it_j  \le W_{ij} \leq& \ u_it_j &\forall& i \in [n_1], \forall j \in [n_2 - 1] \nonumber\\
 L t_j  \le \sum_{i=1}^{n_1}W_{i j} \leq& \ U t_j \ &\forall&     j \in [n_2-1] \nonumber\\
t_j  \geq& \ 0 \ &\forall&  j \in [n_2 - 1], \nonumber \\
W_{ij} \geq& \  0 \ &\forall&  i \in [n_1], \forall  j \in [n_2 ],  \label{eq:rowplusbase6}
\end{align}
Note that (\ref{eq:rowplusbase2}) and (\ref{eq:rowplusbase6}) imply (\ref{eq:rowplusbase1}). Therefore the above can be written as:
\begin{align}
(1 - \sum_{j = 1}^{n_2 - 1} t_j) \leq & \  \sum_{i \in \mathcal{I}} \sum_{k \in T_{i} }  \frac{W_{in_2}}{l_i} +   \frac1L \sum_{i = 1}^{n_1}\sum_{k \in T_0}  W_{ik}  \ &\forall &  (T_0, T_1, \dots, T_ {|\mathcal{I}|}) \in \cP_{|\cI|}(\{n_2 \}) \nonumber\\
(1 - \sum_{j = 1}^{n_2 - 1} t_j) \geq & \ \sum_{i \in [n_1]} \sum_{k \in T_i}  \frac{W_{ik}}{u_i} +   \frac1U \sum_{i = 1}^{n_1}\sum_{k \in T_0}  W_{ik}  \ &\forall&  (T_0, T_1, \dots, T_ {n_1}) \in \cP_{n_1}(\{n_2 \})  \nonumber\\
u_{i_2}W_{i_1n_2} \geq& \ l_{i_1} W_{i_2n_2} &\forall& i_2 \in [n_1], \forall  i_1 \in \mathcal{I} \nonumber \\
l_it_j  \le W_{ij} \leq& \ u_it_j &\forall& i \in [n_1], \forall  j \in [n_2 - 1] \nonumber\\
L t_j  \le \sum_{i=1}^{n_1}W_{i j} \leq& \ U t_j \ &\forall&     j \in [n_2-1] \nonumber\\
t_j  \geq& \ 0 \ &\forall&  j \in [n_2 - 1], \nonumber \\
W_{ij} \geq& \ 0 \ &\forall&  i \in [n_1],  \forall  j \in [n_2],  \nonumber
\end{align}
proving the base case.
\newline\textbf{Induction step:} After projecting $t_{n_2}$, $\dots$, $t_{n_2 -j}$, by the induction hypothesis we  have the following system:
\begin{align}
1 - \sum_{p = 1}^{n_2 -  (j + 2)} t_p - \sum_{i \in \mathcal{I}} \sum_{k \in T_{i} } \frac{W_{ik}}{l_i} - \frac1L \sum_{i = 1}^{n_1}\sum_{k \in T_0}  W_{ik} \leq& \  t_{n_2 - (j +1)}  \ &\forall&  (T_0, \dots, T_ {|\mathcal{I}|}) \in \cP_{|\cI|}(\{n_2 - j, \dots, n_2\})  \nonumber\\
\frac{W_{i,n_2 - (j +1)}}{u_i} \leq & \ t_{n_2 - (j +1)} &\forall& i \in [n_1] \nonumber \\
\frac1U\sum_{i=1}^{n_1} {W_{i,n_2 - (j +1)}} \leq & \ t_{n_2 - (j +1)} &\forall& i \in [n_1] \nonumber \\
1 - \sum_{p = 1}^{n_2 -  (j + 2)} t_p  - \sum_{i \in [n_1]} \sum_{k \in T_i}  \frac{W_{ik}}{u_i} -   \frac1U \sum_{i = 1}^{n_1}\sum_{k \in T_0}  W_{ik} \geq& \  t_{n_2 - (j +1)} \ &\forall&  (T_0, \dots, T_ {n_1}) \in \cP_{n_1}(\{n_2 - j, \dots, n_2\}) \nonumber \\
\frac{W_{i,n_2 - (j +1)}}{l_i} \geq& \ t_{n_2 - (j +1)} &\forall& i \in \mathcal{I} \nonumber \\
\frac1L\sum_{i=1}^{n_1} {W_{i,n_2 - (j +1)}} \geq & \ t_{n_2 - (j +1)} &\forall& i \in [n_1] \nonumber \\
u_{i_2}W_{i_1k} \geq& \ l_{i_1} W_{i_2k} &\forall& i_2 \in [n_1], \forall  i_1 \in \mathcal{I},\forall  k \in  \{n_2 - j, \dots, n_2\} \nonumber \\
 l_it_k  \le W_{ik} \leq& \ u_it_k &\forall& i \in [n_1], \forall k \in [n_2 - (j + 1)] \nonumber\\
L t_j  \le \sum_{i=1}^{n_1}W_{i j} \leq& \ U t_j \ &\forall&     j \in [n_2-(j + 1)] \nonumber\\
t_k \geq& \ 0 \ &\forall&  k \in [n_2 - (j + 1)] \nonumber \\
W_{ik} \geq& \ 0 \ &\forall&  i \in [n_1], \forall k \in [n_2 ]. \nonumber 
\end{align}
Note that a constraint of the form:
$$- \sum_{i \in [n_1]} \sum_{k \in T_i}  \frac{W_{ik}}{u_i} - \frac1U \sum_{i \in [n_1]} \sum_{k \in T_0}  {W_{ik}}  \geq- \sum_{i \in \mathcal{I}} \sum_{k \in T'_{i} }\frac{W_{ik}}{l_i} - \frac1L \sum_{i \in \cI} \sum_{k \in T_0}  {W_{ik}} ,$$
where $(T_0, T_1 \dots T_ {n_1}) \in \cP_{n_1}(\{n_2 - j, \dots, n_2\})$ and $(T'_0,  T'_1, \dots T'_ {|\mathcal{I}|})\in \cP_ {|\mathcal{I}|} (\{n_2 - j, \dots, n_2\})$ is implied by constraints of the form $u_{i_2}W_{i_1k} \geq l_{i_1} W_{i_2k} \forall i_2 \in [n_1], i_1 \in \mathcal{I}, k \in  \{n_2 - j, \dots, n_2\}$, and the fact that $L \le U$. Thus, after projecting $t_{n_2 - (j +1)}$, we obtain:
\begin{align}
1 - \sum_{p = 1}^{n_2 -  (j + 2)} t_p \leq& \ \sum_{i \in \mathcal{I}} \sum_{k \in T_{i} }  \frac{W_{ik}}{l_i} + \frac1L \sum_{i \in \mathcal{I}} \sum_{k \in T_{0} }  {W_{ik}} \  &\forall&  (T_1, T_2, \dots, T_ {|\mathcal{I}|}) \in \cP_ {|\mathcal{I}|}( \{n_2 - (j +1), \dots, n_2\} )\nonumber \\
1 - \sum_{p = 1}^{n_2 -  (j + 2)} t_p  \geq& \ \sum_{i \in [n_1]} \sum_{k \in T_i}  \frac{W_{ik}}{u_i} + \frac1U \sum_{i \in \mathcal{I}} \sum_{k \in T_{0} }  {W_{ik}} \  &\forall& (T_1, T_2, \dots, T_ {n_1}) \in \cP_{n_1} (\{n_2 - (j +1), \dots, n_2\}) \nonumber \\ 
u_{i_2}W_{i_1k} \geq& \ l_{i_1} W_{i_2k}  &\forall& i_2 \in [n_1], \forall i_1 \in \mathcal{I}, \forall k \in  \{n_2 - (j + 1), \dots, n_2\} \nonumber \\
l_it_k  \le W_{ik} \leq& \ u_it_k  &\forall& i \in [n_1], \forall k \in [n_2 - (j + 2)] \nonumber\\
L t_k  \le \sum_{i=1}^{n_1} W_{ik} \leq& \ U t_k  &\forall&  \forall k \in [n_2 - (j + 2)] \nonumber\\
t_k \geq& \ 0 \  &\forall&  k \in [n_2 - (j + 2)] \nonumber \\
W_{ik} \geq& \ 0 \  &\forall&  i \in [n_1], \forall k \in [n_2 ]. \nonumber
\end{align}
It is straightforward now to see that after all $t$ variables are projected, we obtain the result.
\end{proof}

{\color{red}  }

\begin{proposition}\label{prop:sepplus1}
The inequalities in (\ref{eq:rowplusconv1}) can be separated in polynomial-time. 
\end{proposition}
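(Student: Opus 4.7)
The plan is to adapt the column-by-column greedy argument used in the proof of Proposition~\ref{prop:sep1}, with one extra ``bucket'' corresponding to the index $0$ in the partition. Since the inequalities in \eqref{eq:rowplusconv1} are indexed by partitions $(T_0,T_1,\dots,T_{n_1})$ of $[n_2]$, and each column $j \in [n_2]$ appears in exactly one block of the partition, the left-hand side decomposes as a sum over $j$ of independent contributions, so we may greedily pick the best block for each column in isolation.

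Concretely, given a candidate point $\hat W$, for each $j \in [n_2]$ I would compute the $n_1+1$ numbers
\[
a_{0j} := \frac{1}{U}\sum_{i=1}^{n_1} \hat W_{ij}, \qquad a_{ij} := \frac{\hat W_{ij}}{u_i} \ \text{ for } i \in [n_1],
\]
and set
\[
j_*:= \min\left(\argmax_{i \in \{0,1,\dots,n_1\}} a_{ij}\right),
\]
breaking ties with the smallest index. Defining $T_i^* := \{ j \in [n_2] \,|\, j_* = i\}$ for $i \in \{0,1,\dots,n_1\}$ yields a partition of $[n_2]$, and by construction the corresponding inequality in \eqref{eq:rowplusconv1} has the largest left-hand side value among all such inequalities. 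Setting
\[
\theta := \sum_{j=1}^{n_2} \max_{i \in \{0,1,\dots,n_1\}} a_{ij},
\]
the point $\hat W$ violates some inequality in \eqref{eq:rowplusconv1} if and only if $\theta > 1$, in which case the partition $(T_0^*,T_1^*,\dots,T_{n_1}^*)$ exhibits a most-violated inequality.

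There is no real obstacle here; the only thing to verify carefully is the decomposability claim, namely that for any partition the left-hand side of \eqref{eq:rowplusconv1} is $\sum_j a_{f(j)\,j}$ where $f(j)$ is the unique block index containing $j$. This is immediate from the definitions of $a_{0j}$ and $a_{ij}$. The complexity of the procedure is $\mathcal{O}(n_1 n_2)$ since for each of the $n_2$ columns we compute $n_1+1$ ratios and take their maximum, which establishes polynomial-time separability.
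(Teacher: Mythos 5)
Your proposal is correct and follows essentially the same route as the paper's proof: for each column you compare the $n_1+1$ candidate contributions $\frac{1}{U}\sum_i \hat W_{ij}$ and $\hat W_{ij}/u_i$, greedily assign the column to the maximizing block (ties to block $0$, exactly as the paper does), and check whether the resulting maximal left-hand side exceeds $1$, all in $\mathcal{O}(n_1 n_2)$ time. The decomposability observation you flag is indeed the only point that needs checking, and it holds as you state.
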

\begin{proof}
For a given matrix $\hat W$, let us define an index $j_{\text{row+}}$ for each index $j \in \{1,\dots,n_2\}$ as
\[
j_{\text{row+}} := \begin{cases} 0 & \text{ if }  \frac1U \sum_{i=1}^{n_1} \hat W_{ij} \ge \max_{i =1,\dots,n_1} \left \{ \frac{\hat W_{ij}}{u_i}  \right\}  \\
j_{\text{row}}  & \text{ otherwise}
\end{cases},
\]
where $j_{\text{row}}$ is defined according to \eqref{eq:jRow def}.

Then, we define a partition $T_0^*, T_1^*,\dots,T_{n_1}^*$ of the set $ [n_2]$ as 
\[
T_i^* := \{j \,|\, j_{\text{row+}}= i\} .
\]
Let 
\[
\theta := \sum_{i=1}^{n_1}\sum_{j \in T_i^*} \frac{\hat W_{ij}}{u_i} +   \frac1U \sum_{i = 1}^{n_1}\sum_{j \in T_0^*} \hat W_{ij}  .
\]
If $\theta >1$, then a violated inequality is discovered. Otherwise, we conclude that $\hat W$ satisfies all the inequalities in (\ref{eq:rowplusconv1}) (by construction, the partition $T_0^*, T_1^*, \dots, T_m^*$ corresponds to the inequality with the largest deviation, if one exists). 
Finally, we note that the complexity of this separation routine is again  $\mathcal{O}(n_1n_2)$.
\end{proof}


\begin{proposition}
The inequalities in (\ref{eq:rowplusconv2}) can be separated in polynomial-time. 
\end{proposition}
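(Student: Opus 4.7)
The plan is to mirror the proof of Proposition~\ref{prop:sepplus1}, but flip the direction: since inequality~(\ref{eq:rowplusconv2}) asserts that the left-hand side is at least $1$, detecting a violation for a given point $\hat{W}$ amounts to \emph{minimizing} the left-hand side over all partitions and checking whether this minimum falls strictly below $1$.

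The key structural observation is that, exactly as in Proposition~\ref{prop:sepplus1}, the objective decomposes column-by-column. Each index $j \in [n_2]$ lies in exactly one of the sets $T_0, T_1, \dots, T_{|\cI|}$, and its contribution to the sum depends only on that assignment: placing $j$ in $T_0$ contributes $\frac{1}{L}\sum_{i \in \cI}\hat{W}_{ij}$, while placing $j$ in $T_i$ for some $i \in \cI$ contributes $\frac{\hat{W}_{ij}}{l_i}$. Hence the minimization decouples across columns, and an optimal partition can be obtained by independently assigning each column to the set yielding the smallest per-column contribution.

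Concretely, I would define, for each $j \in [n_2]$, the index
\[
j_{\text{row-}} := \min\left(\argmin_{i \in \cI \cup \{0\}} c_{ij}\right),
\]
where $c_{0j} := \frac{1}{L}\sum_{i \in \cI}\hat{W}_{ij}$ and $c_{ij} := \frac{\hat{W}_{ij}}{l_i}$ for $i \in \cI$, breaking ties by the smallest index. Setting $T_i^\ast := \{j \in [n_2] \,|\, j_{\text{row-}} = i\}$ for $i \in \cI \cup \{0\}$ yields a partition whose associated left-hand side
\[
\theta := \sum_{i \in \cI}\sum_{j \in T_i^\ast}\frac{\hat{W}_{ij}}{l_i} + \frac{1}{L}\sum_{i \in \cI}\sum_{j \in T_0^\ast}\hat{W}_{ij}
\]
is, by construction, the minimum value of the LHS of~(\ref{eq:rowplusconv2}) over all partitions in $\cP_{|\cI|}([n_2])$. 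If $\theta < 1$, we return the violated inequality corresponding to $(T_0^\ast, T_1^\ast, \dots, T_{|\cI|}^\ast)$; otherwise $\hat{W}$ satisfies every inequality in~(\ref{eq:rowplusconv2}).

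The procedure runs in $\mathcal{O}(|\cI|\cdot n_2)$ time, since each of the $n_2$ minima is taken over at most $|\cI|+1$ values (the quantities $c_{0j}$ can be precomputed in a single pass). There is no real obstacle: the entire argument is the natural dual of Proposition~\ref{prop:sepplus1}, with $\max$ replaced by $\min$ and the direction of violation reversed.
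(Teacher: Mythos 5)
Your proposal is correct and matches the paper's intent exactly: the paper's own proof is just the one-line remark that the argument is ``similar to the proof of Proposition~\ref{prop:sepplus1}'', and your write-up supplies precisely that mirrored argument (per-column decoupling, $\argmin$ in place of $\argmax$, violation when $\theta<1$), with the right per-column costs $c_{0j}=\frac{1}{L}\sum_{i\in\cI}\hat W_{ij}$ and $c_{ij}=\hat W_{ij}/l_i$ and the correct $\mathcal{O}(|\cI|\cdot n_2)$ complexity. No gaps.
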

\begin{proof}
The proof is similar to the proof of Proposition \ref{prop:sepplus1}.
\end{proof}

\subsection{Proof of Theorem~\ref{thm:2by2}}\label{sec:proofthm4}
The proof of  SOC-representability follows due to \cite{santana2018convex} as the convex hull of a set described by the quadratic constraint $W_{11} W_{22} = W_{21} W_{12}$  and  bound constraints is SOCr.

We next present an example where the convex hull of ${\cU}^{\textup{row}}_{(2,2)}(l,u) \cap {\cU}^{\textup{col}}_{(2,2)}(l,u)$ is not polyhedral.
\begin{proposition}
A point of the form:$$\left[\begin{array}{cc} a &\frac{a^2}{1 -a}\\ 1 - a & a \end{array}\right],$$
for $a \in [0,1)$ is an extreme point of the set ${\cU}^{\textup{row}}_{(2,2)}(l,u) \cap {\cU}^{\textup{col}}_{(2,2)}(l,u)$ where $l = (0,1)$ and $u= (1,1)$.
\end{proposition}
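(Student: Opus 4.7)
The plan is to suppose that $W$ equals a proper convex combination $W = \lambda W^1 + (1-\lambda) W^2$ of two feasible matrices with $\lambda \in (0,1)$, identify the constraints active at $W$, parametrize the nonnegative rank-$1$ matrices satisfying these active constraints as a one-parameter curve, and then use strict convexity of an auxiliary function to conclude $W^1 = W^2 = W$.

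First I would pin down the active constraints. Since $l_2 = u_2 = 1$, the row-$2$ sum is an equality $\sum_j W^k_{2j} = 1$ for every feasible $W^k$. In addition, $W_{11} + W_{21} = a + (1-a) = 1$, so the column-$1$ upper bound is tight at $W$; as this linear inequality is tight at an interior point of the segment $[W^1, W^2]$, it must also be tight at both endpoints. Hence any $W^k$ in play satisfies $\sum_j W^k_{2j} = 1$ and $\sum_i W^k_{i1} = 1$.

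Next, writing $W^k = x^k (y^k)^\top$ with $x^k, y^k \ge 0$, the two active equalities become $x^k_2(y^k_1 + y^k_2) = 1$ and $y^k_1(x^k_1 + x^k_2) = 1$, forcing $x^k_2, y^k_1 > 0$. Normalizing $y^k_1 = 1$ gives $x^k_1 + x^k_2 = 1$, and setting $\alpha_k := x^k_2 \in (0,1]$ yields $x^k_1 = 1-\alpha_k$ and $y^k_2 = (1-\alpha_k)/\alpha_k$, so
\begin{equation*}
W^k = \begin{bmatrix} 1-\alpha_k & (1-\alpha_k)^2/\alpha_k \\ \alpha_k & 1-\alpha_k \end{bmatrix},
\end{equation*}
and the given $W$ corresponds to $\alpha_0 = 1-a$.

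Finally, matching the $(2,1)$ entries of $W = \lambda W^1 + (1-\lambda) W^2$ yields $1-a = \lambda \alpha_1 + (1-\lambda) \alpha_2$, while matching the $(1,2)$ entries yields $g(1-a) = \lambda g(\alpha_1) + (1-\lambda) g(\alpha_2)$, where $g(\alpha) := (1-\alpha)^2/\alpha = 1/\alpha - 2 + \alpha$. Since $g''(\alpha) = 2/\alpha^3 > 0$, the function $g$ is strictly convex on $(0,1]$; Jensen's inequality combined with the $(2,1)$ relation forces $\alpha_1 = \alpha_2 = 1-a$, so $W^1 = W^2 = W$, and $W$ is extreme. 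The main obstacle is step one: correctly identifying the tight constraints so that the rank-$1$ face collapses onto a curve along which the auxiliary function $g$ is strictly convex; once this is done, steps two and three are essentially bookkeeping and an application of Jensen.
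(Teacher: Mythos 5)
Your proof is correct and follows essentially the same route as the paper's: both reduce the question to the strict convexity of the map $u \mapsto u^2/(1-u)$ (your $g(\alpha) = (1-\alpha)^2/\alpha$ is the same function under the substitution $\alpha = 1-u$) evaluated along a one-parameter curve of candidate rank-one matrices. The only difference is that you explicitly justify, via the active constraints and the rank-one factorization, why every matrix appearing in a convex decomposition of $W$ must lie on that curve, a step the paper's proof asserts without argument.
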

\begin{proof}
Clearly the point is feasible. Also if it is not extreme, then it should be possible to write $(a, \frac{a^2}{1 - a}) \in \mathbb{R}^2$ as a convex combination of points of the form $(a_i, \frac{a_i^2}{1 - a_i}) \in \mathbb{R}^2)$ with $a_i \in [0,1)\setminus \{a\}$. However, since $f(u) = \frac{u^2}{1 -u}$ is a strictly convex function, this is not possible (this is because, for example, $f(u) > \frac{a^2}{1 - a} + \left(\frac{1}{(1-a)^2} - 1\right)(u - a) $ for all $u \neq a$, while $f(u) = \frac{a^2}{1 - a} + \left(\frac{1}{(1-a)^2} - 1\right)(u - a) $ for $u = a$). 
\end{proof}

\section{Instance description}
\label{app:instances}

In Tables~\ref{table_size_coal_inst}, \ref{table_size_literature_inst} and \ref{table_size_random_inst}, $AIL$ denotes the subset of arcs $A \cap (I\times L)$. The sets $ALL, ALJ$ and $AIJ$ are defined analogously. The column \textit{Avg. Size $x^s$} (resp. \textit{Avg. Size $x^t$}) displays the average size, over all pools $i\in L$, of the variable matrices $[x^s_{ij}]_{(s,j)}$ (resp. $[x^t_{ij}]_{(i,t)}$) for each instance.

\begin{table}[!h]
\centering
\caption{Mining instances description.}
\label{table_size_coal_inst}
\resizebox{\textwidth}{!}{
\begin{tabular}{cccccccccccccccc}
Inst & $|I|$ & $|L|$ & $|J|$ & $|A|$ & $|AIL|$ & $|ALL|$ & $|ALJ|$ & $|AIJ|$ & $|K|$ & $|x^s|$ & Avg. Size $x^s$ & $|q^s|$ & $|x^t|$ & Avg. Size $x^t$ & $|q^t|$ \\
\hline
2009H2    & 73      & 73      & 50      & 244     & 73        & 71        & 100       & 0         & 4       & 3205    & (18.75, 2.34)   & 1369    & 3718    & (1.97, 26.15)   & 1909    \\
2009Q3    & 31      & 31      & 22      & 104     & 31        & 29        & 44        & 0         & 4       & 594     & (8.26, 2.35)    & 256     & 694     & (1.94, 11.90)   & 369     \\
2009Q4    & 38      & 38      & 27      & 128     & 38        & 36        & 54        & 0         & 4       & 905     & (10.00, 2.37)   & 380     & 1072    & (1.95, 14.82)   & 563     \\
2010Y    & 170     & 170     & 123     & 584     & 170       & 168       & 246       & 0         & 4       & 18038   & (43.05, 2.44)   & 7319    & 21532   & (1.99, 64.05)   & 10889   \\
2010H1   & 86      & 86      & 64      & 298     & 86        & 84        & 128       & 0         & 4       & 4785    & (22.05, 2.47)   & 1896    & 5822    & (1.98, 34.59)   & 2975    \\
2010H2    & 84      & 84      & 59      & 284     & 84        & 82        & 118       & 0         & 4       & 4540    & (21.51, 2.38)   & 1807    & 5516    & (1.98, 33.54)   & 2817    \\
2010Q1    & 39      & 39      & 29      & 134     & 39        & 37        & 58        & 0         & 4       & 1068    & (10.26, 2.44)   & 400     & 1356    & (1.95, 18.13)   & 707     \\
2010Q2    & 43      & 43      & 31      & 146     & 43        & 41        & 62        & 0         & 4       & 1196    & (11.30, 2.40)   & 486     & 1444    & (1.95, 17.51)   & 753     \\
2010Q3   & 39      & 39      & 25      & 126     & 39        & 37        & 50        & 0         & 4       & 914     & (10.26, 2.23)   & 400     & 1056    & (1.95, 14.18)   & 553     \\
2010Q4   & 43      & 43      & 32      & 148     & 43        & 41        & 64        & 0         & 4       & 1264    & (11.26, 2.44)   & 484     & 1582    & (1.95, 19.14)   & 823     \\
2011Y   & 121     & 121     & 95      & 430     & 121       & 119       & 190       & 0         & 4       & 9706    & (30.75, 2.55)   & 3721    & 12022   & (1.98, 50.46)   & 6106    \\
2011H1   & 67      & 67      & 50      & 232     & 67        & 65        & 100       & 0         & 4       & 2811    & (17.25, 2.46)   & 1156    & 3344    & (1.97, 25.70)   & 1722    \\
2011H2   & 53      & 53      & 43      & 190     & 53        & 51        & 86        & 0         & 4       & 1993    & (13.75, 2.58)   & 729     & 2548    & (1.96, 24.85)   & 1317    \\
2011Q1   & 35      & 35      & 27      & 122     & 35        & 33        & 54        & 0         & 4       & 784     & (9.26, 2.49)    & 324     & 936     & (1.94, 14.14)   & 495     \\
2011Q2   & 30      & 30      & 22      & 102     & 30        & 28        & 44        & 0         & 4       & 585     & (8.03, 2.40)    & 241     & 704     & (1.93, 12.47)   & 374     \\
2011Q3   & 19      & 19      & 15      & 66      & 19        & 17        & 30        & 0         & 4       & 260     & (5.26, 2.47)    & 100     & 328     & (1.89, 9.42)    & 179     \\
2011Q4   & 28      & 28      & 23      & 100     & 28        & 26        & 46        & 0         & 4       & 566     & (7.50, 2.57)    & 210     & 722     & (1.93, 13.71)   & 384     \\
2012Y   & 107     & 107     & 79      & 370     & 107       & 105       & 158       & 0         & 4       & 7394    & (27.31, 2.46)   & 2922    & 9000    & (1.98, 42.79)   & 4579    \\
2012H1   & 65      & 65      & 46      & 220     & 65        & 63        & 92        & 0         & 4       & 2719    & (16.85, 2.38)   & 1095    & 3286    & (1.97, 25.98)   & 1689    \\
2012H2   & 41      & 41      & 32      & 144     & 41        & 39        & 64        & 0         & 4       & 1092    & (10.76, 2.51)   & 441     & 1320    & (1.95, 16.88)   & 692     \\
2012Q1   & 26      & 26      & 17      & 84      & 26        & 24        & 34        & 0         & 4       & 412     & (7.00, 2.23)    & 182     & 478     & (1.92, 9.85)    & 256     \\
2012Q2   & 33      & 33      & 23      & 110     & 33        & 31        & 46        & 0         & 4       & 684     & (8.76, 2.33)    & 289     & 810     & (1.94, 12.97)   & 428     \\
2012Q3   & 27      & 27      & 23      & 98      & 27        & 25        & 46        & 0         & 4       & 532     & (7.26, 2.63)    & 196     & 680     & (1.93, 13.44)   & 363    \\
2012Q4	  & 16	  & 16	  & 10	   & 50	   & 16	     & 14	        & 20	      & 0	       & 4	      & 160	 & (4.50, 2.13) 	& 72	    & 188	 & (1.88, 6.50)	 & 104

\end{tabular}}
\end{table}

\begin{table}[!h]
\centering
\caption{Literature instances description.}
\label{table_size_literature_inst}
\resizebox{\textwidth}{!}{
\begin{tabular}{cccccccccccccccc}
Inst & $|I|$ & $|L|$ & $|J|$ & $|A|$ & $|AIL|$ & $|ALL|$ & $|ALJ|$ & $|AIJ|$ & $|K|$ & $|x^s|$ & Avg. Size $x^s$ & $|q^s|$ & $|x^t|$ & Avg. Size $x^t$ & $|q^t|$ \\
\hline
L1   & 3       & 2       & 3       & 9       & 3         & 1         & 3         & 2         & 1       & 10      & (2.50, 2.00)    & 5       & 10      & (2.00, 2.50)    & 5       \\
L2   & 5       & 2       & 4       & 15      & 5         & 2         & 8         & 0         & 4       & 50      & (5.00, 5.00)    & 10      & 28      & (3.50, 4.00)    & 8       \\
L3   & 5       & 2       & 4       & 15      & 5         & 2         & 8         & 0         & 6       & 50      & (5.00, 5.00)    & 10      & 28      & (3.50, 4.00)    & 8       \\
L4   & 8       & 3       & 4       & 26      & 8         & 6         & 12        & 0         & 6       & 144     & (8.00, 6.00)    & 24      & 56      & (4.67, 4.00)    & 12      \\
L5   & 8       & 2       & 5       & 20      & 8         & 2         & 10        & 0         & 4       & 96      & (8.00, 6.00)    & 16      & 50      & (5.00, 5.00)    & 10      \\
L6   & 4       & 2       & 2       & 10      & 4         & 2         & 4         & 0         & 1       & 24      & (4.00, 3.00)    & 8       & 12      & (3.00, 2.00)    & 4       \\
L12  & 3       & 2       & 2       & 9       & 3         & 2         & 4         & 0         & 1       & 18      & (3.00, 3.00)    & 6       & 10      & (2.50, 2.00)    & 4       \\
L13  & 3       & 2       & 2       & 9       & 3         & 2         & 4         & 0         & 1       & 18      & (3.00, 3.00)    & 6       & 10      & (2.50, 2.00)    & 4       \\
L14  & 3       & 2       & 2       & 9       & 3         & 2         & 4         & 0         & 1       & 18      & (3.00, 3.00)    & 6       & 10      & (2.50, 2.00)    & 4       \\
L15  & 3       & 2       & 3       & 18      & 6         & 2         & 6         & 4         & 8       & 24      & (3.00, 4.00)    & 6       & 24      & (4.00, 3.00)    & 6       \\
C2   & 8       & 6       & 6       & 71      & 29        & 9         & 20        & 13        & 4       & 194     & (6.83, 4.83)    & 41      & 203     & (6.33, 5.50)    & 33      \\
D1   & 12      & 10      & 8       & 114     & 46        & 21        & 32        & 15        & 5       & 467     & (9.10, 5.30)    & 91      & 459     & (6.70, 7.00)    & 70     
\end{tabular}}
\end{table}

\begin{table}[!h]
\centering
\caption{Random instances description.}
\label{table_size_random_inst}
\resizebox{\textwidth}{!}{
\begin{tabular}{cccccccccccccccc}
Inst & $|I|$ & $|L|$ & $|J|$ & $|A|$ & $|AIL|$ & $|ALL|$ & $|ALJ|$ & $|AIJ|$ & $|K|$ & $|x^s|$ & Avg. Size $x^s$ & $|q^s|$ & $|x^t|$ & Avg. size $x^t$ & $|q^t|$ \\
\hline
F1   & 10      & 10      & 10      & 84      & 40        & 12        & 32        & 0         & 5       & 241     & (5.60, 4.40)    & 56      & 293     & (5.20, 6.20)    & 62      \\
F2   & 15      & 15      & 15      & 191     & 75        & 35        & 81        & 0         & 3       & 1030    & (10.33, 7.73)   & 155     & 996     & (7.33, 9.87)    & 148     \\
F3   & 15      & 15      & 15      & 177     & 72        & 34        & 71        & 0         & 5       & 1150    & (11.40, 7.00)   & 171     & 1268    & (7.07, 12.27)   & 184     \\
F4   & 15      & 15      & 15      & 187     & 83        & 31        & 73        & 0         & 10      & 1319    & (13.07, 6.93)   & 196     & 1170    & (7.60, 10.67)   & 160     \\
F5   & 20      & 15      & 15      & 183     & 91        & 28        & 64        & 0         & 5       & 1104    & (13.27, 6.13)   & 199     & 999     & (7.93, 8.73)    & 131     \\
F6   & 25      & 15      & 20      & 181     & 71        & 29        & 81        & 0         & 3       & 1613    & (13.73, 7.33)   & 206     & 1540    & (6.67, 16.73)   & 251     \\
F7   & 20      & 15      & 25      & 219     & 82        & 34        & 103       & 0         & 8       & 1926    & (14.53, 9.13)   & 218     & 1934    & (7.73, 18.20)   & 273     \\
F8   & 25      & 15      & 25      & 206     & 90        & 31        & 85        & 0         & 8       & 1617    & (14.73, 7.73)   & 221     & 1543    & (8.07, 13.33)   & 200     \\
F9   & 30      & 10      & 25      & 179     & 94        & 12        & 73        & 0         & 10      & 1292    & (16.30, 8.50)   & 163     & 1181    & (10.60, 12.20)  & 122     \\
F10  & 25      & 15      & 30      & 360     & 72        & 19        & 111       & 158       & 8       & 1155    & (10.40, 8.67)   & 156     & 1013    & (6.07, 12.73)   & 191     \\
F11  & 10      & 10      & 10      & 103     & 32        & 15        & 26        & 30        & 5       & 230     & (6.20, 4.10)    & 62      & 220     & (4.70, 5.20)    & 52      \\
F12  & 15      & 15      & 15      & 194     & 60        & 26        & 55        & 53        & 3       & 697     & (8.80, 5.40)    & 132     & 839     & (5.73, 10.60)   & 159     \\
F13  & 15      & 15      & 15      & 247     & 71        & 32        & 75        & 69        & 5       & 1196    & (11.80, 7.13)   & 177     & 1048    & (6.87, 10.40)   & 156     \\
F14  & 15      & 15      & 15      & 220     & 61        & 29        & 69        & 61        & 10      & 841     & (9.47, 6.53)    & 142     & 835     & (6.00, 10.33)   & 155     \\
F15  & 20      & 15      & 15      & 244     & 80        & 33        & 59        & 72        & 5       & 1147    & (13.47, 6.13)   & 202     & 1066    & (7.53, 10.13)   & 152     \\
F16  & 25      & 15      & 20      & 188     & 59        & 9         & 45        & 75        & 3       & 445     & (7.60, 3.60)    & 114     & 402     & (4.53, 5.33)    & 80      \\
F17  & 20      & 15      & 25      & 167     & 40        & 15        & 39        & 73        & 8       & 313     & (6.20, 3.60)    & 93      & 343     & (3.67, 6.27)    & 94      \\
F18  & 25      & 15      & 25      & 201     & 47        & 17        & 61        & 76        & 8       & 609     & (8.27, 5.20)    & 124     & 557     & (4.27, 9.27)    & 139     \\
F19  & 30      & 10      & 25      & 186     & 41        & 3         & 33        & 109       & 10      & 223     & (6.10, 3.60)    & 61      & 212     & (4.40, 4.50)    & 45      \\
F20  & 30      & 20      & 25      & 220     & 58        & 24        & 59        & 79        & 5       & 509     & (7.65, 4.15)    & 153     & 490     & (4.10, 6.85)    & 137     \\
F21  & 30      & 35      & 25      & 566     & 241       & 117       & 208       & 0         & 5       & 7618    & (24.31, 9.29)   & 851     & 6787    & (10.23, 19.77)  & 692     \\
F22  & 35      & 15      & 40      & 347     & 66        & 12        & 85        & 184       & 8       & 786     & (8.33, 6.47)    & 125     & 781     & (5.20, 9.53)    & 143     \\
F23  & 35      & 40      & 35      & 536     & 215       & 119       & 202       & 0         & 5       & 6634    & (21.93, 8.03)   & 877     & 6274    & (8.35, 20.93)   & 837     \\
F24  & 25      & 15      & 30      & 355     & 78        & 26        & 78        & 173       & 8       & 1338    & (13.87, 6.93)   & 208     & 1330    & (6.93, 13.93)   & 209    
\end{tabular}}
\end{table}

\section*{Acknowledgments}

This work was supported by NSF CMMI [grant number {1562578}]; and the CNPq-Brazil [grant number 248941/2013-5].

\bibliographystyle{plain}
\bibliography{Rankone-bib}

\end{document}